\pgfplotsset{compat=1.14}
\title[Stability for $W^{s,n/s}$-harmonic maps]{$s$-stability for $W^{s,n/s}$-harmonic maps\\ in homotopy groups}
\author{Katarzyna Mazowiecka}
\address[Katarzyna Mazowiecka]{
Institute of Mathematics,%
University of Warsaw,
Banacha 2,
02-097 Warszawa, Poland}
\email{k.mazowiecka@mimuw.edu.pl}
\author{Armin Schikorra}
\address[Armin Schikorra]{Department of Mathematics,
University of Pittsburgh,
301 Thackeray Hall,
Pittsburgh, PA 15260, USA}
\email{armin@pitt.edu}
\definecolor{indigo}{rgb}{0.29, 0.0, 0.51}
\definecolor{p1}{gray}{0.4}
\definecolor{p2}{gray}{0.6}
\definecolor{p3}{gray}{0.98}
\definecolor{p4}{gray}{0.8}
\definecolor{p5}{gray}{0.9}
\def\eps{\varepsilon}
\def\C{{\mathbb C}}
\newcommand{\dif}{\,\mathrm{d}}
\def\N{{\mathbb N}}
\def\S{{\mathbb S}}
\renewcommand{\div}{{\rm div}}
\newtheorem{theorem}{Theorem}
\newtheorem{lemma}[theorem]{Lemma}
\newtheorem{corollary}[theorem]{Corollary}
\newtheorem{proposition}[theorem]{Proposition}
\newtheorem{remark}[theorem]{Remark}
\def\supp{{\rm supp\,}}
\newcommand{\dx}{\dif x}
\newcommand{\dy}{\dif y}
\newcommand{\dz}{\dif z}
\newcommand{\R}{\mathbb{R}}
\newcommand{\Z}{\mathbb{Z}}
\newcommand{\brac}[1]{\left (#1 \right )}
\newcommand{\abs}[1]{\left |#1 \right |}
\newcommand{\barint}{
\rule[.036in]{.12in}{.009in}\kern-.16in \displaystyle\int }
\newcommand{\barcal}{\mbox{$ \rule[.036in]{.11in}{.007in}\kern-.128in\int $}}
\def\mvint_#1{\mathchoice
          {\mathop{\vrule width 6pt height 3 pt depth -2.5pt
                  \kern -8pt \intop}\nolimits_{\kern -3pt #1}}%
          {\mathop{\vrule width 5pt height 3 pt depth -2.6pt
                  \kern -6pt \intop}\nolimits_{#1}}%
          {\mathop{\vrule width 5pt height 3 pt depth -2.6pt
                  \kern -6pt \intop}\nolimits_{#1}}%
          {\mathop{\vrule width 5pt height 3 pt depth -2.6pt
                  \kern -6pt \intop}\nolimits_{#1}}}
\numberwithin{theorem}{section} \numberwithin{equation}{section}
\newcommand{\lap}{\Delta }
\newcommand{\aleq}{\precsim}
\newcommand{\aeq}{\approx}
\newcommand{\Ds}[1]{(-\lap)^{\frac{#1}{2}}}
\newcommand{\laps}[1]{(-\lap)^{\frac{#1}{2}}}
\newcommand{\lapms}[1]{I^{#1}}
\let\latexchi\chi
\renewcommand\chi{\@ifnextchar_\sub@chi\latexchi}
\newcommand{\sub@chi}[2]{
  \@ifnextchar^{\subsup@chi{#2}}{\latexchi^{}_{#2}}%
}
\newcommand{\subsup@chi}[3]{
  \latexchi_{#1}^{#3}%
}
\def\tikz@arc@opt[#1]{
  {%
    \tikzset{every arc/.try,#1}%
    \pgfkeysgetvalue{/tikz/start angle}\tikz@s
    \pgfkeysgetvalue{/tikz/end angle}\tikz@e
    \pgfkeysgetvalue{/tikz/delta angle}\tikz@d
    \ifx\tikz@s\pgfutil@empty%
      \pgfmathsetmacro\tikz@s{\tikz@e-\tikz@d}
    \else
      \ifx\tikz@e\pgfutil@empty%
        \pgfmathsetmacro\tikz@e{\tikz@s+\tikz@d}
      \fi%
    \fi
    \tikz@arc@moveto
    \xdef\pgf@marshal{\noexpand%
    \tikz@do@arc{\tikz@s}{\tikz@e}
      {\pgfkeysvalueof{/tikz/x radius}}
      {\pgfkeysvalueof{/tikz/y radius}}}%
  }%
  \pgf@marshal%
  \tikz@arcfinal%
}
\let\tikz@arc@moveto\relax
\def\tikz@arc@movetolineto#1{%
  \def\tikz@arc@moveto{\tikz@@@parse@polar{\tikz@arc@@movetolineto#1}(\tikz@s:\pgfkeysvalueof{/tikz/x radius} and \pgfkeysvalueof{/tikz/y radius})}}
\def\tikz@arc@@movetolineto#1#2{#1{\pgfpointadd{#2}{\tikz@last@position@saved}}}
\tikzset{%
  move to start/.code=\tikz@arc@movetolineto\pgfpathmoveto,%
  line to start/.code=\tikz@arc@movetolineto\pgfpathlineto}
\begin{document}

\begin{abstract}
We study $s$-dependence for minimizing $W^{s,n/s}$-harmonic maps $u\colon \mathbb{S}^n \to \mathbb{S}^\ell$ in homotopy classes. Sacks--Uhlenbeck theory shows that, for each $s$, minimizers exist in a generating subset of $\pi_{n}(\S^\ell)$. We show that this generating subset can be chosen locally constant in $s$. We also show that as $s$ varies the minimal $W^{s,n/s}$-energy in each homotopy class changes continuously. In particular, we provide progress to a question raised by Mironescu \cite{Mironescu2007} and Brezis--Mironescu \cite{BM-book}.
\end{abstract}

\keywords{Minimizing fractional harmonic maps, homotopy theory, regularity theory, existence}
\sloppy

\subjclass[2010]{58E20, 35B65, 35J60, 35S05}
\maketitle
\tableofcontents
\sloppy

\section{Introduction}
We study minimizing $W^{s,p}$-harmonic maps between spheres in homotopy classes, which are defined as maps $u \in W^{s,p}(\S^n,\S^\ell)$ with least energy
\begin{equation}\label{def:energy}
 \mathcal{E}_{s,p}(u) \coloneqq \int_{\S^n} \int_{\S^n} \frac{|u(x)-u(y)|^{p}}{|x-y|^{n+sp}}\dx\dy = [u]^p_{W^{s,p}(\S^n,\S^\ell)}
 \end{equation}
 among maps of the same homotopy.
Here  $s\in(0,1)$, $p>1$. In \eqref{def:energy} we take the $\R^{n+1}$-Euclidean distance and $\R^{\ell+1}$-Euclidean distance in the numerator and in the denominator, respectively.

A natural question arises: given $\alpha \in \pi_{n}(\S^\ell)$, is the infimum of $\mathcal{E}_{s,p}$ attained in $\alpha$? In other words, can we find a map $u\in W^{s,p}(\S^n,\S^\ell)$, $u \in \alpha$ such that
\[
\mathcal{E}_{s,p}(u) \leq   \mathcal{E}_{s,p}(v)\quad \forall v\in W^{s,p}(\S^n,\S^\ell), \quad v \in \alpha.
\]
If $p > \frac{n}{s}$ the answer is yes, by standard methods of calculus of variation, due to the compact embedding of $W^{s,p}(\S^n,\S^\ell)$ into $C^0(\S^n,\S^\ell)$. If $p < \frac{n}{s}$ it was shown by Brezis--Nirenberg in \cite{BrezisNirenbergI} that no homotopy theory can be defined for maps in $W^{s,p}(\S^n,\S^\ell)$ and the infimum energy of $\mathcal{E}_{s,p}$ in any homotopy class is zero.
%
Thus, throughout this work we will focus on the critical, conformally invariant case $p=\frac{n}{s}$. By \cite{BrezisNirenbergI}, it is known that the standard notions of homotopy can be extended to $W^{s,\frac{n}{s}}$-Sobolev maps, see also \cite[Section 2]{kasia2020minimal} for an overview.

For $s \in (0,1)$ set
\begin{equation}\label{def:infenergy}
 \#_s \alpha \coloneqq \inf_{u \in W^{s,\frac ns}(\S^n,\S^\ell), u \in \alpha} \mathcal{E}_{s,\frac ns}(u), \quad \alpha \in \pi_{n}(\S^\ell).
\end{equation}
In the case of maps between spheres of same dimension $u\colon \S^n\to \S^n$, i.e., when the homotopy classes are given by their degree we instead write
\begin{equation}\label{def:infenergysphere}
 \#_s d \coloneqq \inf_{u \in W^{s,\frac ns}(\S^n,\S^\ell), \deg(u)=d } \mathcal{E}_{s,\frac ns}(u), \quad d \in \Z.
\end{equation}

In general the question whether $\#_s \alpha$ is attained is rather involved even in the the local case $s=1$, see, e.g., \cite{Sucks1, DK98, R98} or, for $n=1$ and $s\in(0,1)$, see \cite[Chapter 12]{BM-book}. In \cite{kasia2020minimal} we showed\footnote{
The case $s\le \frac12$, $n=1$ was not treated in \cite{kasia2020minimal} but is is covered in \cite{K2025}.}
\begin{theorem}\label{th:sucks}
For any $\ell,n \geq 1$, with either $(\ell,n) = (1,1)$ or $\ell \geq 2$, $s \in (0,1)$. There exists a generating set $X_s \subset \pi_{n}(\S^\ell)$ such that for any $\alpha \in X_s$ the infimum $\#_s \alpha$ is attained.
\end{theorem}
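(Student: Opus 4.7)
The plan is to carry out a Sacks--Uhlenbeck style bubbling analysis adapted to the conformally invariant, nonlocal energy $\mathcal{E}_{s,n/s}$, defining the generating set as
\[
X_s := \{\alpha \in \pi_n(\S^\ell) : \#_s \alpha \text{ is attained}\},
\]
and showing by induction on the energy level that every $\alpha$ decomposes as a sum of elements of $X_s$. I would start with a minimizing sequence $u_k \in \alpha$ with $\mathcal{E}_{s,n/s}(u_k) \to \#_s\alpha$, extract a weak limit $u_\infty \in W^{s,n/s}(\S^n,\S^\ell)$, and detect concentration via the standard covering/good-ball procedure: points where, on every scale, more than a small threshold $\varepsilon_0 > 0$ of energy piles up. The existence of such a threshold is the small-energy regularity theory for fractional harmonic maps, and yields both strong convergence off the concentration set and the energy gap $\#_s\beta \geq \varepsilon_0$ for every nontrivial class $\beta$.

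Next I would extract the bubbles. Around each concentration point $x_{k,i}$, choose Möbius rescalings $\Phi_{k,i}$ of $\S^n$ so that $u_k \circ \Phi_{k,i}^{-1}$ just barely fails to concentrate further energy on unit balls; this is possible precisely because $\mathcal{E}_{s,n/s}$ is conformally invariant. A diagonal argument and a separation-of-scales analysis then produce finitely many bubbles $v_1,\dots,v_m \in W^{s,n/s}(\S^n,\S^\ell)$, each a weak limit of a rescaled subsequence. The two analytic ingredients I would need next are (i) an $\varepsilon$-regularity / small-energy compactness statement for critical fractional harmonic maps, and (ii) a \emph{no-neck-energy} identity
\[
\#_s \alpha = \mathcal{E}_{s,n/s}(u_\infty) + \sum_{i=1}^m \mathcal{E}_{s,n/s}(v_i),
\]
together with a topological identification $\alpha = [u_\infty] + \sum_{i=1}^m [v_i]$ in $\pi_n(\S^\ell)$, obtained by gluing $u_\infty$ to the bubbles through the annular regions of strong convergence and invoking $\mathrm{VMO}$-homotopy theory.

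Once these are in place, the conclusion follows by a short induction. If $m = 0$, no bubbling occurs, strong convergence preserves the class, and $u_\infty$ attains $\#_s \alpha$, so $\alpha \in X_s$. Otherwise the energy identity together with the gap $\varepsilon_0$ shows that each of $[u_\infty], [v_1], \dots, [v_m]$ has $\#_s$-value strictly below $\#_s\alpha$, so we may iterate on each summand. The iteration must terminate because the energy levels are bounded below by multiples of $\varepsilon_0$, and the resulting decomposition expresses $\alpha$ as a finite sum of classes in $X_s$.

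The main obstacle is the no-neck-energy step. In the local setting $s=1$ this is proved via Pohozaev identities or H\'elein's moving frames combined with a logarithmic cut-off on the neck annuli, but the $W^{s,n/s}$ seminorm is genuinely nonlocal, so the analysis must control long-range cross-terms between $u_\infty$, the bubbles $v_i$, and the necks between them. I would expect to need commutator estimates for $\laps{s}$ of the type used in the small-energy regularity theory, together with a careful choice of conformal annular cut-offs that exploits the conformal invariance of $\mathcal{E}_{s,n/s}$ to trivialise the scale-dependence of the error terms.
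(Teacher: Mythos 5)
The statement you are attempting to prove (\Cref{th:sucks}) is not actually proved in this paper; it is imported from \cite{kasia2020minimal}, and the only piece of its proof that the present paper quotes is the dichotomy \Cref{le:sucks}. So the proper comparison is to that lemma and the Sacks--Uhlenbeck machinery behind it.

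Your bubbling outline is in the right spirit, and your terminal induction (bounded energy plus a uniform gap $\varepsilon_0$ forces the iteration to stop) is the correct skeleton. But there is a genuine gap exactly where you flag it: you make the full no-neck-energy identity
\[
\#_s\alpha = \mathcal{E}_{s,n/s}(u_\infty) + \sum_{i=1}^m \mathcal{E}_{s,n/s}(v_i)
\]
a prerequisite, and you leave it as a wish rather than a proof. For the nonlocal $W^{s,n/s}$-energy this is not a routine adaptation of the Pohozaev or moving-frame arguments from $s=1$; the long-range cross-terms between body, bubbles, and necks that you mention are precisely where that technology does not transfer. More importantly, the full identity is \emph{not needed} for \Cref{th:sucks}, and reaching for it first reverses the actual logical order. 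What \cite{kasia2020minimal} establishes, and what this paper isolates as \Cref{le:sucks}, is a much weaker dichotomy: either $\#_s\alpha$ is attained, or for every $\delta>0$ there exist nontrivial $\alpha_1,\alpha_2$ with $\alpha=\alpha_1+\alpha_2$,
\[
\#_s\alpha_1 + \#_s\alpha_2 \le \#_s\alpha + \delta, \qquad \theta < \#_s\alpha_i < \#_s\alpha - \tfrac{\theta}{2} \quad (i=1,2).
\]
Only the uniform energy drop $\theta$ is needed to terminate the iteration and conclude that $\{\alpha : \#_s\alpha \text{ attained}\}$ generates; exact additivity of energy across the neck is never used. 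The sharp energy identity (this paper's \Cref{th:energyidentity}) is then \emph{deduced afterwards} from the $\delta$-dichotomy, using the finiteness result \Cref{th:finitehomotopy} to pass to the limit $\delta\to0$ along a fixed decomposition. Replacing your no-neck claim by the weaker $\delta$-dichotomy is exactly what makes the argument tractable in the nonlocal setting; proving the dichotomy still requires the covering and concentration machinery you sketch, but it never has to control the neck regions at the level of exact energy equality.
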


In this work we are interested in the stability of such results as $s$ changes.

Our first main result is that one can choose the generating set $X_s$ from \Cref{th:sucks} \emph{locally stable} as $s$ varies. More precisely we have,

\begin{theorem}\label{th:generator2}
Fix $n,\ell \geq 1$ with either $(\ell,n) = (1,1)$ or $\ell \geq 2$. Let $\Lambda > 0$ and set for $s \in (0,1)$
\[
 X_{s} \coloneqq \left \{\alpha \in \pi_n(\S^\ell)\colon \quad \text{there exists a $W^{s,\frac{n}{s}}(\S^n,\S^\ell)$-minimizer $u$ in $\alpha$ and $[u]_{W^{s,\frac{n}{s}}(\S^n,\S^\ell)} \leq \Lambda$} \right \}.
\]
Then for any $t \in (0,1)$ there exists $\delta > 0$ such that
\[
Y \coloneqq \bigcap_{s \in (t-\delta,t+\delta)} X_{s}
\]
spans the same set as $X_s$, i.e., $X_{s} \subset {\rm span} Y$, for each $s \in (t-\delta,t+\delta)$.
\end{theorem}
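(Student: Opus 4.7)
My approach combines three ingredients: the continuity of $s\mapsto \#_s\alpha$ (the companion main result of this paper announced in the abstract), a uniform lower energy-gap for nontrivial classes, and an iterated Sacks--Uhlenbeck-style decomposition.

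I would first, by shrinking $\delta$, secure a uniform energy gap: some $\eps_0>0$ with $\#_s\alpha\geq\eps_0$ for every nontrivial $\alpha\in\pi_n(\S^\ell)$ and every $s\in(t-\delta,t+\delta)$. This follows from the energy gap at the fixed level $t$ (nontrivial classes have strictly positive minimal energy) together with continuity of $s\mapsto \#_s\alpha$ applied to the finitely many classes of small energy near $t$. I would then set $F\subset\pi_n(\S^\ell)$ to be the set of classes with $\#_t\alpha\leq\Lambda+1$; this set is finite, since $\pi_n(\S^\ell)$ is finitely generated and minimal energies grow without bound as $\alpha$ varies. By continuity, after possibly shrinking $\delta$, $X_s\subset F$ for every $s\in(t-\delta,t+\delta)$.

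The core argument is a recursion. Given $\alpha\in X_s$, if $\alpha\in Y$ we stop; otherwise there exists $s_0\in(t-\delta,t+\delta)$ with $\alpha\notin X_{s_0}$, and I would apply the Sacks--Uhlenbeck concentration-compactness argument underlying \Cref{th:sucks} at level $s_0$ to decompose $\alpha=\alpha_1+\cdots+\alpha_k$ in $\pi_n(\S^\ell)$ with $\#_{s_0}\alpha_i<\#_{s_0}\alpha$ strictly for each~$i$. Each $\alpha_i$ still lies in $F$ by continuity. The finiteness of $F$ combined with the strict energy decrease at each recursion step forces termination; all leaves belong to $Y$, so summing leaf multiplicities yields $\alpha\in{\rm span}\, Y$, which is the desired conclusion.

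\textbf{Main obstacle.} I expect the subtlest point to be the decomposition step in the borderline case where $\alpha\notin X_{s_0}$ holds not because a minimizer fails to exist at level $s_0$, but because an attained minimizer has energy slightly exceeding $\Lambda$. Then $\#_s\alpha\leq\Lambda<\#_{s_0}\alpha$, which by continuity forces $\#_{s_0}\alpha\in(\Lambda,\Lambda+o_\delta(1)]$. This thin regime must either be excluded by shrinking $\delta$ below $\min_{\alpha\in F}|\Lambda-\#_t\alpha|$ whenever this minimum is strictly positive, or handled by producing a genuine decomposition directly from the Sacks--Uhlenbeck criterion comparing $\#_{s_0}\alpha$ against candidate decomposition energies at the fractional level. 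Setting up the Sacks--Uhlenbeck argument at fractional $s_0$ with the correct strict inequality (and ensuring that all pieces arise as genuine homotopy summands of $\alpha$ in $\pi_n(\S^\ell)$) is where I would expect the bulk of the technical work to lie.
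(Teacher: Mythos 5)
Your plan is the same in broad outline as the paper's own proof (Theorem \ref{th:generator2v2}): finiteness of the low-energy candidate classes, a Sacks--Uhlenbeck decomposition / energy identity when attainment fails, and stability of $s\mapsto\#_s\alpha$, combined iteratively. Where it is not yet tight is the termination argument. You decompose $\alpha$ at level $s_0$, then decompose a piece $\alpha_i$ at a different level $s_1$, and so on; the strict energy decrease you invoke is recorded at a \emph{new} level at each step, and ``strict decrease plus finiteness of $F$'' does not by itself preclude a cycle (the energy of a given class is not monotone as you jump between levels). To close the recursion you must transfer each decrease to a single reference level via the quantitative stability estimate $\#_s\alpha\geq\#_{\tilde s}\alpha-\eps$ for $|s-\tilde s|<\delta$ (this is \Cref{co:comparisonenergy}, which is the ingredient behind the continuity of $s\mapsto\#_s\alpha$), and then take $\eps$ small compared to the per-step gap $\theta$ from \Cref{le:sucks} and the energy floor $\lambda$ of \Cref{pr:minimalenergy}; you should also verify that this $\theta$ is uniform over $s$ in a compact subinterval of $(0,1)$. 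The paper packages this differently: it iterates the energy identity and argues by contradiction that after boundedly many steps a class $\beta_i$ reappears in the accumulated inequality, which is impossible once the accumulated $\eps$-errors are smaller than $\lambda$.

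The borderline case you flag is indeed the crux. If $\alpha\notin X_{s_0}$ because an $s_0$-minimizer exists but has seminorm slightly exceeding $\Lambda$, then $\#_{s_0}\alpha$ \emph{is} attained, \Cref{le:sucks} gives no nontrivial split, and the energy identity \Cref{th:energyidentity} returns the trivial decomposition $N=1$. Note that the paper's proof of \Cref{th:generator2v2} faces the same issue in the same spot: the assertion $2\leq A+B$ there relies on the decomposition being nontrivial, which is only automatic when $\#_{\tilde s}\alpha$ is unattained. Your suggested fix---shrinking $\delta$ below $\min_{\alpha\in F}\bigl|\Lambda^{n/t}-\#_t\alpha\bigr|$---fails precisely when that minimum is zero, so this step needs a genuine argument in both your version and the paper's (e.g.\ working with a finite candidate set at a slightly enlarged energy threshold, and showing that the recursion always passes through a level at which attainment truly fails).
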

See \Cref{th:generator2v2} for a more precise statement with respect to the dependencies of $\delta$.

Let us stress that \Cref{th:generator2} does \emph{not} imply that
\[
 X_{s} \coloneqq \{\alpha \in \pi_{n}(\S^\ell)\setminus \{0\}\colon \quad \#_s \alpha \text{ is attained}\}
\]
is unchanged as $s$ varies. Rather, it says that we can choose the set of attained generators of $\pi_{n}(\S^\ell)$ locally stable. In particular, for $n=\ell$ (when the homotopy class is identified with the degree), in principle, it could be possible that
\[
 X_s = \begin{cases}
          \{1\} \quad &s < 1/2,\\
          \Z\setminus \{0\} \quad &s = 1/2,\\
          \{1,2,-3\} \quad &s \in (1/2,3/4),\\
          \{2,-3\} \quad &s \in (3/4,1).
       \end{cases}
\]

An important ingredient in the aforementioned \Cref{th:generator2}, and our second main result, is the following continuity result for the map $s \mapsto \#_s \alpha$. By lower semicontinuity of the norm, it is elementary that for $\alpha \in \pi_{n}(\S^\ell)$ we have for any $t \in (0,1)$
\[
 \#_t \alpha \leq \liminf_{s \to t^+} \#_s \alpha.
\]
But actually, we have full continuity.
\begin{theorem}\label{th:continuousminenergy}
Assume $\ell,n \geq 1$, with either $(\ell,n) = (1,1)$ or $\ell \geq 2$. Let $\alpha \in \pi_{n}(\S^\ell)$. Then the map
\[
 s \mapsto \#_s \alpha, \quad \text{$s \in (0,1)$}
\]
is continuous.
\end{theorem}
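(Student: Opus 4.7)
The plan is to prove continuity of $s\mapsto\#_s\alpha$ at each $t\in(0,1)$ by splitting into the four semicontinuity claims $\limsup_{s\to t^\pm}\#_s\alpha\leq \#_t\alpha$ and $\liminf_{s\to t^\pm}\#_s\alpha\geq \#_t\alpha$. The central tool is the pointwise comparison
\[
 [v]^{n/s}_{W^{s,n/s}(\S^n,\S^\ell)} \;\leq\; 2^{n/s-n/t}\,[v]^{n/t}_{W^{t,n/t}(\S^n,\S^\ell)}\qquad (s\leq t),
\]
valid for any $\S^\ell$-valued $v$: since $p=n/s$ gives $n+sp=2n$ independent of $s$, the kernels match, and $|v(x)-v(y)|\leq 2$ converts the extra power $n/s-n/t\geq 0$ into the factor $2^{n/s-n/t}$. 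In particular $W^{t,n/t}(\S^n,\S^\ell)\subset W^{s,n/s}(\S^n,\S^\ell)$ whenever $s\leq t$.

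Two of the four subcases follow at once. Testing the inequality on an almost-minimizer $v_t\in\alpha$ at level $t$ (whose VMO homotopy class coincides at both exponents by Brezis--Nirenberg) gives $\#_s\alpha\leq 2^{n/s-n/t}(\#_t\alpha+\varepsilon)$ for $s<t$, whence $\limsup_{s\to t^-}\#_s\alpha\leq \#_t\alpha$. Symmetrically, applying the comparison to an almost-minimizer $u_s$ at level $s>t$ -- which lies in $W^{t,n/t}$ by the inclusion above -- yields $\#_t\alpha\leq\liminf_{s\to t^+}\#_s\alpha$, the direction already singled out as elementary before the statement.

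For $\limsup_{s\to t^+}\#_s\alpha\leq\#_t\alpha$ an almost-minimizer at level $t$ may fail to lie in $W^{s,n/s}$ for $s>t$, so I substitute a \emph{smooth} representative $w\in C^\infty(\S^n,\S^\ell)\cap\alpha$ with $\mathcal{E}_{t,n/t}(w)\leq \#_t\alpha+\varepsilon$; such a $w$ exists by density of smooth maps in each $W^{t,n/t}$-homotopy class (mollification followed by nearest-point projection onto $\S^\ell$, compatible with the VMO homotopy class since $W^{t,n/t}\hookrightarrow\mathrm{VMO}$). For smooth $w$ the integrand $|w(x)-w(y)|^{n/s}|x-y|^{-2n}$ is dominated, uniformly for $s$ in a compact subinterval of $(0,1)$ containing $t$, by $L^{n/s}|x-y|^{n/s-2n}$ near the diagonal (integrable since $s<1$) and by $2^{n/s}|x-y|^{-2n}$ away from it; dominated convergence yields $\mathcal{E}_{s,n/s}(w)\to\mathcal{E}_{t,n/t}(w)$, and smoothness fixes the homotopy class of $w$ at every $s$, so $\limsup_{s\to t^+}\#_s\alpha\leq \#_t\alpha+\varepsilon$.

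The main obstacle is $\liminf_{s\to t^-}\#_s\alpha\geq \#_t\alpha$. Pick $s_k\nearrow t$ and almost-minimizers $u_k\in\alpha$ with $\mathcal{E}_{s_k,n/s_k}(u_k)\to L\coloneqq\liminf_{s\to t^-}\#_s\alpha$. For any fixed $s_0<t$, once $s_k>s_0$ the comparison yields a uniform bound on $[u_k]_{W^{s_0,n/s_0}}$. The critical compact embedding of $W^{s_0,n/s_0}$ into $L^q(\S^n)$ for every $q<\infty$, together with diagonalization as $s_0\nearrow t$, produces a subsequence with $u_k\to u$ a.e.\ and in every $L^q$. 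Fatou applied to the non-negative integrands $|u_k(x)-u_k(y)|^{n/s_k}|x-y|^{-2n}\to |u(x)-u(y)|^{n/t}|x-y|^{-2n}$ gives $\mathcal{E}_{t,n/t}(u)\leq L$, so $u\in W^{t,n/t}$. The hardest remaining step is $u\in\alpha$: the uniform $W^{s_0,n/s_0}$-bound produces a uniform VMO modulus (fractional Poincar\'e in the critical scale), which together with the $L^1$-convergence $u_k\to u$ upgrades, via the standard splitting on small and large ball scales, to $\|u_k-u\|_{\mathrm{BMO}}\to 0$. BMO-continuity of the Brezis--Nirenberg VMO homotopy class then forces $u\in\alpha$, whence $\#_t\alpha\leq\mathcal{E}_{t,n/t}(u)\leq L$.
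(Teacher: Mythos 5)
Your comparison inequality $[v]^{n/s}_{W^{s,n/s}}\le 2^{n/s-n/t}[v]^{n/t}_{W^{t,n/t}}$ for $s\le t$ is correct and is in fact used in the paper (inside the proof of its finite-homotopy-type lemma), and the three ``easy'' semicontinuity claims you establish from it are sound: the smooth-representative argument for $\limsup_{s\to t}\#_s\alpha\le\#_t\alpha$ (dominated convergence applied to a fixed $w\in C^\infty\cap\alpha$ with $\mathcal{E}_{t,n/t}(w)\le\#_t\alpha+\varepsilon$) is a clean way to get both one-sided upper bounds, and your inclusion argument for $\#_t\alpha\le\liminf_{s\to t^+}\#_s\alpha$ is exactly the elementary direction the paper singles out.

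The fourth direction, $\liminf_{s\to t^-}\#_s\alpha\ge\#_t\alpha$, has a genuine gap, and it is precisely the place where all the machinery of the paper is needed. The claim that ``the uniform $W^{s_0,n/s_0}$-bound produces a uniform VMO modulus,'' which then with $L^1$-convergence forces $\|u_k-u\|_{\mathrm{BMO}}\to 0$ and hence $u\in\alpha$, is false. A bounded family in the scale-invariant space $W^{s_0,n/s_0}(\S^n)$ does \emph{not} have a common VMO modulus: conformal reparametrization / concentration preserves the $W^{s_0,n/s_0}$ seminorm (and, by your own inequality, keeps all the lower-$s_0$ seminorms uniformly bounded) while destroying any uniform control on mean oscillation at small scales. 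Concretely, for $n=\ell=1$ take $m_\lambda\colon\S^1\to\S^1$ to be Möbius reparametrizations of the identity concentrating at a point: each has degree one, $[m_\lambda]_{W^{1/2,2}}$ is constant, $m_\lambda\to\mathrm{const}$ in every $L^q$, yet $m_\lambda\not\to\mathrm{const}$ in $\mathrm{BMO}$ (otherwise the degree would jump). The same mechanism --- bubbling along an almost-minimizing sequence --- can occur for your $u_k$, and then the weak limit $u$ need not lie in $\alpha$: it lies in some $\alpha'\ne\alpha$ and the missing topology escapes into bubbles. Your argument gives no way to account for this lost homotopy or its lost energy, so $\#_t\alpha\le\mathcal{E}_{t,n/t}(u)\le L$ does not follow.

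This is exactly why the paper does not pass to a weak limit of almost-minimizers at all. Instead it decomposes $\alpha$ via the Sacks--Uhlenbeck energy identity (\Cref{th:energyidentity}) into finitely many classes $\alpha_i$ in which $\#_s\alpha_i$ \emph{is attained}, and for actual minimizers proves a conformally-invariant, \emph{global} higher-integrability estimate (\Cref{th:regularity}): minimizers belong to $W^{\Theta s,n/(\Theta s)}(\S^n,\S^\ell)$ with a bound depending only on $\#_s\alpha_i$. Feeding this improved bound into the Sobolev-norm stability (\Cref{pr:energyclose}) yields the one-sided comparison $\#_s\alpha_i\ge\#_{\tilde s}\alpha_i-\varepsilon$ for $\tilde s\approx s$ (\Cref{co:comparisonenergy}), which then sums over the decomposition. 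In other words, the paper replaces the compactness-of-minimizing-sequences step --- which fails because of bubbling --- by an a priori estimate valid for minimizers. If you want to complete your approach, you would need to incorporate a bubbling/energy-identity analysis at the step where $u_k\rightharpoonup u$ in order to recover the lost energy and homotopy, which is essentially what \Cref{th:energyidentity} supplies.
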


Besides being a crucial ingredient for the proof of \Cref{th:generator2}, \Cref{th:continuousminenergy} has also several interesting corollaries that we discuss now.

Firstly, it is natural to expect that minimizers exist in the class of degree one maps in $W^{s,\frac{n}{s}}(\S^n,\S^n)$. Towards this, Berlyand--Mironescu--Rybalko--Sandier obtain in \cite[Lemma 3.1]{BMRS14} that for maps in $W^{\frac12,2}(\S^1,\S^1)$ minimizers are attained for any degree, see also \cite[Theorem 12.9]{BM-book}. Moreover, Mironescu proved a stability result \cite[Theorem 2]{M15}, which asserts that for $s \in [\frac{1}{2},\frac{1}{2}+\delta)$ degree one minimizers exist in $W^{s,\frac1s}(\S^1,\S^1)$. As a corollary of \Cref{th:continuousminenergy} we can extend the latter in the other direction.
\begin{corollary}\label{co:existenceclosetohalf}
For maps from $\S^1$ to $\S^1$, there exists $\delta>0$ such that
 \[
\#_s 1 =  \inf\{[u]_{W^{s,\frac {1}{s}}}^{\frac {1}{s}}\colon u\in W^{s,\frac{1}{s}}(\S^1,\S^1), \, \deg u=1\}
 \]
is attained for all $s\in(\frac 12-\delta,\frac12 + \delta)$.
\end{corollary}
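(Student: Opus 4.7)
My plan is to apply \Cref{th:generator2} at the distinguished exponent $t=\frac{1}{2}$ with a carefully tuned threshold $\Lambda$, exploiting the fact from \cite[Lemma~3.1]{BMRS14} (see also \cite[Theorem~12.9]{BM-book}) that at $s=\frac{1}{2}$ the infimum $\#_{1/2}d$ is attained for every $d\in\Z$ and that the values $\#_{1/2}d$ are strictly monotone in $|d|$.

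First, because $\#_{1/2}1<\#_{1/2}2$, I fix $\Lambda>0$ with $\Lambda^{2}\in(\#_{1/2}1,\,\#_{1/2}2)$. Since $\mathcal{E}_{s,1/s}(u)=[u]_{W^{s,1/s}(\S^1,\S^1)}^{1/s}$, the membership condition $[u]\leq\Lambda$ at $s=\frac{1}{2}$ is precisely $\#_{1/2}\deg u\leq\Lambda^{2}$; combined with the BMRS attainment this forces
\[
X_{1/2}=\{-1,\,0,\,+1\},
\]
whose non-zero elements $\pm 1$ already generate $\pi_{1}(\S^1)=\Z$.

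Next, I apply \Cref{th:generator2} with $t=\frac{1}{2}$ and this $\Lambda$ to obtain $\delta>0$ and $Y=\bigcap_{s\in(1/2-\delta,\,1/2+\delta)} X_{s}$ with $X_{s}\subset\mathrm{span}\,Y$ for every such $s$. Specialising to $s=\frac{1}{2}$ gives $Y\subset\{-1,0,+1\}$ together with $\{\pm 1\}\subset\mathrm{span}\,Y$; the only subsets of $\{-1,0,+1\}$ whose span is $\Z$ are those containing $+1$ or $-1$. Consequently at least one of $\pm 1$ lies in $X_{s}$ for every $s\in(1/2-\delta,1/2+\delta)$, so at least one of $\#_{s}1,\#_{s}(-1)$ is attained.

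Finally, the reflection $u\mapsto\overline{u}$ on $\S^{1}\subset\C$ preserves the seminorm $[\,\cdot\,]_{W^{s,1/s}(\S^1,\S^1)}$ and inverts the degree, giving $\#_{s}1=\#_{s}(-1)$ and sending a minimizer of one class to a minimizer of the other, which upgrades the previous step to attainment of $\#_{s}1$ on the full interval. The only delicate point in this plan is the identification $X_{1/2}=\{-1,0,+1\}$, which relies on the BMRS attainment combined with the strict monotonicity $\#_{1/2}1<\#_{1/2}d$ for $|d|\ge 2$; once that is in hand, \Cref{th:generator2} and the reflection symmetry deliver the corollary automatically.
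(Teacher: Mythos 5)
Your proof is correct, and it takes a genuinely different route from the paper's. The paper derives \Cref{co:existenceclosetohalf} directly from \Cref{th:continuousminenergy} (continuity of $s\mapsto\#_s d$), \Cref{th:energyidentity} and the explicit value $\#_{1/2}d=4\pi^2|d|$: it assumes $\#_s 1$ is not attained, decomposes $\#_s 1\ge\#_s d_{1,s}+\#_s d_{2,s}$ with $d_{i,s}\notin\{-1,0,1\}$, and transports both sides to $s=\frac12$ by equicontinuity to reach the contradiction $4\pi^2\ge 8\pi^2-3\eps$. You instead invoke the heavier \Cref{th:generator2} (generator stability): with $\Lambda^2\in(4\pi^2,8\pi^2)$ and BMRS attainment you pin down $X_{1/2}=\{-1,0,+1\}$, so the set $Y=\bigcap_s X_s$ must still generate $\Z$, hence contains $\pm1$, and the reflection $u\mapsto\bar u$ upgrades this to attainment of $\#_s 1$ on the whole interval. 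Both arguments rest on the same underlying machinery (\Cref{co:comparisonenergy}, the energy identity, and the explicit $s=\frac12$ formula), and since \Cref{th:generator2v2} is proved independently of the corollary, there is no circularity. The paper's route is more self-contained and exhibits the quantitative contradiction explicitly; yours is shorter once \Cref{th:generator2} is available and makes transparent that the only role of the BMRS formula is to identify $X_{1/2}$. One small clean-up for your write-up: after concluding that $Y$ contains $+1$ or $-1$, you can note directly that the reflection symmetry gives $1\in X_s\Leftrightarrow -1\in X_s$ (the $\Lambda$-bound is preserved since $[\bar u]=[u]$), so in fact both $\pm1$ lie in $X_s$ for every $s$ in the interval.
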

This provides progress towards \cite[Open Problem 1]{Mironescu2007} and \cite[Open Problem 24.]{BM-book}.

More generally, if $\#_s \alpha$ is attained for an $\alpha \in \pi_{n}(\S^\ell)$ it is unclear
whether $\#_t \alpha$ is attained for $t \approx s$. \Cref{co:existenceclosetohalf} works for degree one maps, because they have the lowest energy level among nontrivial homotopy classes. This observation is true in any dimension and we have
\begin{corollary}\label{co:existenceclosetohalfweirdo}
Fix $n,\ell \geq 1$ with either $(\ell,n) = (1,1)$ or $\ell \geq 2$. Assume that for some $t \in (0,1)$ and $\alpha \in \pi_{n}(\S^\ell) \setminus \{0\}$ we have
\[
 \#_t \alpha \leq \#_t \beta \quad \forall \beta \in \pi_{n}(\S^\ell) \setminus \{0\}.
\]
Then not only $\#_t \alpha$ is attained by \cite{kasia2020minimal}, but also there exists $\delta>0$ such that $\#_s \alpha$ is attained for all $s \in (t-\delta,t+\delta)$.
\end{corollary}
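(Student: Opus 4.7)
\emph{Proof plan.} I would argue by contradiction, using \Cref{th:continuousminenergy} together with the Sacks--Uhlenbeck-type bubble analysis developed in \cite{kasia2020minimal} that underlies \Cref{th:sucks}. Set $m := \#_t\alpha$; note $m>0$ by $\eps$-regularity, since $\alpha \neq 0$. Assume towards a contradiction that there is a sequence $s_k \to t$ with $\#_{s_k}\alpha$ not attained. The concentration-compactness analysis then yields, at each $s_k$, a decomposition
\[
 \alpha = \beta_1^{(k)} + \ldots + \beta_{N_k}^{(k)} \quad \text{in } \pi_n(\S^\ell),
\]
with $N_k \geq 2$ nontrivial summands $\beta_i^{(k)}$ and the energy lower bound
\[
 \#_{s_k}\alpha \;\geq\; \sum_{i=1}^{N_k} \#_{s_k}\beta_i^{(k)}.
\]

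Next I would stabilize the decompositions along a subsequence. Fix a compact neighborhood $[t-\delta_0,t+\delta_0] \subset (0,1)$ of $t$. Uniform $\eps$-regularity on this neighborhood provides $\eps_0 > 0$ such that $\#_s\beta \geq \eps_0$ for every nontrivial $\beta$ and every $s$ in this range, whence $N_k \leq \#_{s_k}\alpha/\eps_0$ is uniformly bounded. Moreover, standard growth lower bounds for minimal energies in $\pi_n(\S^\ell)$ (linear-in-degree in the sphere-to-sphere setting, analogous growth in the remaining permissible cases, together with the trivial observation when $\pi_n(\S^\ell)$ is a finite group) ensure that only finitely many nontrivial classes $\beta$ satisfy $\#_s\beta \leq 2m$ for $s$ near $t$. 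Combined with the boundedness of $N_k$, after passing to a subsequence I may assume $N_k \equiv N \geq 2$ and that $\beta_i^{(k)} \equiv \beta_i$ is constant in $k$. Thus $\alpha = \beta_1 + \ldots + \beta_N$ with each $\beta_i$ nontrivial.

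I would then send $k \to \infty$ and invoke \Cref{th:continuousminenergy} both for $\alpha$ and for each $\beta_i$ separately:
\[
 m \;=\; \#_t\alpha \;=\; \lim_{k\to\infty}\#_{s_k}\alpha \;\geq\; \lim_{k\to\infty}\sum_{i=1}^{N}\#_{s_k}\beta_i \;=\; \sum_{i=1}^{N}\#_t\beta_i \;\geq\; N\cdot m,
\]
where the last inequality uses the hypothesis $\#_t\beta_i \geq \#_t\alpha = m$ for each nontrivial $\beta_i$. Since $N \geq 2$ and $m > 0$, this is absurd, so $\#_s\alpha$ must be attained for all $s$ in some neighborhood of $t$.

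The principal obstacle in this plan is the stabilization step: a priori the summands $\beta_i^{(k)}$ could drift to infinitely many distinct homotopy classes as $k$ increases. This is resolved by a two-fold bound: uniform $\eps$-regularity caps $N_k$, while the finiteness of nontrivial classes with uniformly bounded minimal energy---which is where the specific structure of $\pi_n(\S^\ell)$ enters---caps which classes can appear as summands. Once the decomposition is stabilized to fixed $\beta_1,\ldots,\beta_N$, the conclusion is a clean application of \Cref{th:continuousminenergy} to finitely many fixed classes.
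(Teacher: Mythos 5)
Your argument is correct and follows essentially the same route as the paper's proof: argue by contradiction, decompose $\alpha$ via the Sacks--Uhlenbeck energy identity (\Cref{th:energyidentity}) into at least two nontrivial attained classes, use the continuity of $s\mapsto\#_s\beta$ (\Cref{th:continuousminenergy}) over the finitely many relevant homotopy classes, and then invoke the hypothesis $\#_t\alpha\le\#_t\beta$ together with the positive lower energy bound (\Cref{pr:minimalenergy}) to reach the contradiction $\#_t\alpha \ge 2\#_t\alpha$. The only technical difference is cosmetic: the paper works with an explicit $\eps$--$\delta$ equicontinuity bound over the finite set $\mathcal{Q}$ from \Cref{th:finitehomotopy}, whereas you extract a subsequence that stabilizes the decomposition; both devices rest on the same finiteness. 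One small tightening: the claim that only finitely many nontrivial $\beta$ satisfy $\#_s\beta\le 2m$ for $s$ near $t$ is not really a consequence of ``standard growth lower bounds'' except in the degree case $n=\ell$; in general it is exactly the content of \Cref{th:finitehomotopy} (which rests on \cite{BBM2} and \cite{VS20}), and it is cleaner to cite that result directly rather than appeal to growth heuristics.
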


For our next corollary of \Cref{th:continuousminenergy}, we consider the Bourgain--Brezis--Mironescu degree inequality, \cite[Theorem 0.6]{BBM2}, which says that for maps $u \in W^{s,\frac{n}{s}}(\S^n,\S^n)$
 \begin{equation}\label{ineq:BBM}
  \deg u \le C_{n,s} [u]_{W^{s,\frac ns}(\S^n,\S^n)}^{\frac ns}.
 \end{equation}
Let $\bar{C}_{n,s}$ be the minimal constant, i.e.,
\[
 \bar{C}_{n,s} \coloneqq \sup_{u \in W^{s,\frac{n}{s}}(\S^n,\S^n)} \frac{\deg u}{[u]_{W^{s,\frac ns}(\S^n,\S^n)}^{\frac ns}} < \infty.
\]
It is natural to discuss the continuity of the map $s \mapsto \bar{C}_{n,s}$.
\begin{corollary}\label{co:BBMconstant}
The map $s \mapsto \bar{C}_{n,s}$ is lower semicontinuous.

More precisely, for any $\Lambda > 0$ the map $s \mapsto \bar{C}_{n,s;\Lambda}$ defined by
\[
 \bar{C}_{n,s;\Lambda} \coloneqq \sup_{u \in W^{s,\frac{n}{s}}(\S^n,\S^n), 0<[u]_{W^{s,\frac{n}{s}} }\leq \Lambda} \frac{\deg u}{[u]_{W^{s,\frac ns}(\S^n,\S^n)}^{\frac ns}}
\]
is continuous.
%
%
\end{corollary}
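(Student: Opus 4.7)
The strategy is to organize the supremum defining $\bar{C}_{n,s;\Lambda}$ by homotopy class and reduce the assertion to \Cref{th:continuousminenergy}. For fixed $s\in(0,1)$ and $d\in\mathbb{N}$, the supremum of $d/[u]_{W^{s,n/s}}^{n/s}$ over $u \in W^{s,n/s}(\S^n,\S^n)$ of degree $d$ with $[u]_{W^{s,n/s}}\le\Lambda$ is exactly $d/\#_s d$ whenever $\#_s d\leq\Lambda^{n/s}$; consequently
\[
\bar{C}_{n,s}=\sup_{d\geq 1}\frac{d}{\#_s d},\qquad
\bar{C}_{n,s;\Lambda}=\sup\!\left\{\frac{d}{\#_s d}\colon d\in\mathbb{N},\ \#_s d\leq\Lambda^{n/s}\right\}.
\]
Each $s\mapsto d/\#_s d$ is well-defined and positive by \eqref{ineq:BBM}, and is continuous on $(0,1)$ by \Cref{th:continuousminenergy}. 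Since a pointwise supremum of continuous functions is lower semicontinuous, the lower semicontinuity of $s\mapsto\bar{C}_{n,s}$ follows at once.

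For continuity of $s\mapsto\bar{C}_{n,s;\Lambda}$ I would first reduce the supremum to a finite index set. Applying \eqref{ineq:BBM} yields $d\leq C_{n,s}\,\#_s d\leq C_{n,s}\,\Lambda^{n/s}$ for every admissible $d$, and since $s\mapsto C_{n,s}$ is continuous (from the explicit construction underlying \eqref{ineq:BBM}), the right-hand side is locally bounded. Thus, for each $t\in(0,1)$ there exist a neighborhood $I\ni t$ and an integer $N$ with $D_s\coloneqq\{d\in\mathbb{N}\colon\#_s d\leq\Lambda^{n/s}\}\subset\{1,\ldots,N\}$ for all $s\in I$, so that on $I$
\[
\bar{C}_{n,s;\Lambda}=\max\!\left\{d/\#_s d\colon 1\leq d\leq N,\ \#_s d\leq\Lambda^{n/s}\right\}
\]
is a maximum of finitely many continuous functions restricted to a continuously varying constraint.

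The continuity then splits into two one-sided bounds. Upper semicontinuity at $t$ would follow by taking $s_k\to t$, a maximizing $d_k\in D_{s_k}\cap\{1,\ldots,N\}$, a subsequence with $d_k\equiv d$, and passing to the limit in both $\#_{s_k}d\leq\Lambda^{n/s_k}$ and $d/\#_{s_k}d$ via \Cref{th:continuousminenergy} and the continuity of $s\mapsto\Lambda^{n/s}$; the limit constraint places $d\in D_t$, so $\limsup\bar{C}_{n,s_k;\Lambda}=d/\#_t d\leq\bar{C}_{n,t;\Lambda}$. For lower semicontinuity I would pick a near-maximizing degree $d^*\in D_t$ with $\#_t d^*<\Lambda^{n/t}$ \emph{strict}; continuity then keeps $d^*$ admissible for $s$ near $t$, and $\bar{C}_{n,s;\Lambda}\geq d^*/\#_s d^*\to d^*/\#_t d^*$.

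The main obstacle is the degenerate boundary case in which the optimum at $t$ is realized only by some $d^*$ with $\#_t d^*=\Lambda^{n/t}$, so that $d^*$ may fall out of $D_s$ on one side of $t$ and the preceding lower-semicontinuity argument breaks. To resolve it, I plan to approximate in $\Lambda$: the set of values $\Lambda>0$ with $\Lambda^{n/t}\in\{\#_t d\colon d\in\mathbb{N}\}$ is at most countable, so for a dense set of $\Lambda'$ the degeneracy does not occur and continuity holds there. Monotonicity of $\Lambda\mapsto\bar{C}_{n,s;\Lambda}$ then transfers continuity from a dense set of $\Lambda'$ to every $\Lambda$ via a squeeze argument as $\Lambda'\to\Lambda$.
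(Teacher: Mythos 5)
Your overall reduction---reorganizing the supremum by degree, using \Cref{th:finitehomotopy} to restrict to a finite range of degrees, and invoking \Cref{th:continuousminenergy} for the continuity of $s\mapsto d/\#_s d$---is exactly the paper's approach, and your direct observation that $\bar C_{n,s}=\sup_{d\ge 1} d/\#_s d$ is a pointwise supremum of continuous functions, hence lower semicontinuous, is a clean way to get the first assertion (the paper instead deduces it by writing $\bar C_{n,s}=\sup_{\Lambda>0}\bar C_{n,s;\Lambda}$). You also go further than the paper's proof in explicitly noticing the boundary degeneracy in the constraint set: the paper's displayed identity reduces $\bar C_{n,s;\Lambda}$ to a finite maximum of continuous functions over an $s$-dependent index set and does not explicitly discuss what happens when a degree $d$ enters or leaves that set as $s$ varies through a value where $\#_s d=\Lambda^{n/s}$.

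However, the fix you propose for this degeneracy does not close it. The squeeze argument needs $\Lambda'\mapsto\bar C_{n,t;\Lambda'}$ to be continuous at the given $\Lambda$ so that $\bar C_{n,t;\Lambda'}\to\bar C_{n,t;\Lambda}$ as $\Lambda'\to\Lambda$ through nondegenerate values; but $\Lambda'\mapsto\bar C_{n,t;\Lambda'}$ is a nondecreasing step function whose jump points are precisely the degenerate thresholds $\Lambda$ with $\Lambda^{n/t}\in\{\#_t d\colon d\ge 1\}$. The squeeze therefore only traps $\liminf_{s\to t}\bar C_{n,s;\Lambda}$ and $\limsup_{s\to t}\bar C_{n,s;\Lambda}$ between the left and right limits $\bar C_{n,t;\Lambda^-}\le\bar C_{n,t;\Lambda^+}$, which differ at a jump, and gives no information about whether either one coincides with $\bar C_{n,t;\Lambda}$. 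So the degenerate case is exactly the one left open. A further small inaccuracy sits at the same sensitive spot: the admissible index set is not $\{d\colon\#_s d\le\Lambda^{n/s}\}$ but $\{d\colon\exists\,u,\ \deg u=d,\ [u]_{W^{s,n/s}}\le\Lambda\}$, and at the boundary $\#_s d=\Lambda^{n/s}$ these coincide only if $\#_s d$ is attained; identifying them silently drops the attainment issue that makes the boundary case delicate in the first place.
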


Corresponding results hold for the Hopf degree, see \cite{SVS20},
\[
 (1-\frac{1}{4n},1) \ni s \mapsto \tilde {C}_{n,s} \coloneqq \sup_{u \in W^{s,\frac{n}{s}}(\S^{4n-1},\S^{2n}), 0<[u]_{W^{s,\frac{4n-1}{s}} }} \frac{[u]_{\pi_{4n-1}(\S^{2n})}}{[u]_{W^{s,\frac{ 4n-1}s}(\S^n,\S^n)}^{\frac{4n}{s}}},
\]
and more generally for maps representing rational homotopy groups of spheres, see \cite{PS23}.

We turn to the main ideas for \Cref{th:continuousminenergy}, and thus \Cref{th:generator2}. We use the following new ingredient
\begin{equation}\label{eq:aslkdjcvn}
\text{whenever $\#_s \alpha$ is attained, then $\#_t \alpha \leq \#_s \alpha +\eps$ for $t \approx s$},
\end{equation}
for the precise formulation see \Cref{co:comparisonenergy}. To obtain \eqref{eq:aslkdjcvn} we show that $W^{s,n/s}(\S^n,\S^\ell)$ minimizers actually belong \emph{globally} to $W^{s_1,\frac{n}{s_1}}(\S^n,\S^\ell)$ for an $s_1>s$, see \Cref{th:regularity}. H\"older regularity in this situation has been established in \cite{S15,MS18}, after pioneering work for $n=1$ and $s=\frac{1}{2}$ in \cite{DaLio-Riviere-1Dsphere}, but this is a \emph{local} result on domains where the $BMO$-norm is small. Smallness of the BMO-norm is of course not a scaling invariant property, indeed it depends heavily on the specific minimizer $u$ and one cannot deduce from it a uniform global property. Our higher regularity  result in a conformally invariant Sobolev space, \Cref{th:regularity}, is, on the other hand, uniform and independent of the specific minimizer $u$. Hence, using stability of the Sobolev norm $W^{s,n/s}$, \Cref{pr:energyclose}, we obtain \eqref{eq:aslkdjcvn}. Once we have \eqref{eq:aslkdjcvn}, the main results follow from combinatorial observations coupled with the Sacks--Uhlenbeck theory developed in \cite{kasia2020minimal} and the energy identity from \Cref{th:energyidentity}.

We conclude this introduction with a few remarks about possible generalizations of these results.
\begin{remark}
\leavevmode
\begin{enumerate}
\item The modulus of continuity in \Cref{th:continuousminenergy} and the $\delta$ in \Cref{th:generator2} are relatively easily to compute. They depend on the regularity theory gain (which can be calculated explicitly) and they get worse with large $\#_s \alpha$.
 \item In this paper, to ensure clarity in our presentation, we focus on the case when the target manifold is a sphere. Nonetheless, it should be easy to extend the results to the case of a compact Lie group in the target.
 \item It seems that an extension of our results to general target manifold is  more challenging --- the only obstacle is regularity, but it is unclear even for minimizing maps how to get scaling-invariant higher regularity of \Cref{th:regularity}.
 \item It would be interesting to study the limiting cases as $s \to 1^-$ and $s \to 0^+$.
 \end{enumerate}

\end{remark}

\subsection*{Notation}
We write $\alpha, \beta$, etc. to denote a homotopy group, $\S^n$ for the sphere in the domain, and $\S^\ell$ for the target sphere. For brevity, we write $\aleq$ whenever there is a constant $C$ (not depending on any
crucial quantity) such that $A\le C $B. Similarly, $A\approx B$ means $A \aleq B$ and $B\precsim A$.

\subsection*{Acknowledgement}
The work presented in this paper was conducted as part of the Thematic Research Programme \emph{Analysis and Geometry in Warsaw}, which received funding from the University of Warsaw via IDUB (Excellence Initiative Research University). Part of this work was carried out while K.M. and A.S. were visiting University of Bielefeld. We like to express our gratitude to the University for its hospitality. A.S. is an Alexander-von-Humboldt Fellow. A.S. is funded by NSF Career DMS-2044898. The project is co-financed by the Polish National Agency for Academic Exchange within Polish Returns Programme - BPN/PPO/2021/1/00019/U/00001 (K.M.). The project is co-financed by National Science Centre grant 2022/01/1/ST1/00021 (KM).

\section{Preliminary results}
Let us emphasize that some of the results of this section can be easily extended to general target manifolds. For brevity we restrict everything to sphere targets.

The first result is well-known and follows from the embedding of the critical Sobolev space  into BMO is the following, see, e.g., \cite[Lemma 2.10]{kasia2020minimal}.
\begin{proposition}\label{pr:minimalenergy}
For any $\ell,n \in \N$ there exists $\lambda =\lambda(\ell,n)> 0$ such that
whenever $s \in (0,1]$ and $\alpha \in \pi_{n}(\S^\ell) \setminus \{0\}$
\[
 \#_s \alpha \geq \lambda.
\]
\end{proposition}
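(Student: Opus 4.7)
The plan is to combine the critical Sobolev embedding $W^{s,n/s}(\S^n)\hookrightarrow BMO(\S^n)$ with the Brezis--Nirenberg VMO homotopy theory \cite{BrezisNirenbergI}, keeping careful track that all constants are uniform in $s\in (0,1]$.

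First I would prove the BMO bound. For any geodesic ball $B\subset \S^n$ of radius $r$, with $\overline{u}_B := \frac{1}{|B|}\int_B u\dx$, applying Jensen's inequality twice together with $|x-y|\leq 2r$ on $B\times B$ yields
\[
\left(\frac{1}{|B|}\int_B |u-\overline{u}_B|\dx\right)^{n/s} \leq \frac{1}{|B|^2}\int_B\int_B |u(x)-u(y)|^{n/s}\dx\dy \leq \frac{(2r)^{2n}}{|B|^2}\,[u]^{n/s}_{W^{s,n/s}(\S^n)} \aleq \mathcal{E}_{s,n/s}(u),
\]
with an implicit constant depending only on $n$ (since $|B|\approx r^n$). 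Taking the supremum over $B$ shows $\|u\|_{BMO(\S^n)}^{n/s}\aleq \mathcal{E}_{s,n/s}(u)$, uniformly in $s\in(0,1]$.

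Next, since $W^{s,n/s}(\S^n)\hookrightarrow VMO(\S^n)$, the map $u$ has a well-defined homotopy class in the Brezis--Nirenberg sense, and there is a geometric threshold $\varepsilon_0=\varepsilon_0(n,\ell)>0$ such that $\|u\|_{BMO(\S^n)}<\varepsilon_0$ forces $u$ to be nullhomotopic: the mollified averages $\overline{u}_{B(\cdot,r)}$ then lie inside the tubular neighborhood of $\S^\ell\subset\R^{\ell+1}$, and projecting onto $\S^\ell$ produces a continuous homotopy from $u$ to a constant.

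Combining, if $\alpha\neq 0$ and $u\in\alpha$, then $\|u\|_{BMO(\S^n)}\geq\varepsilon_0$, and hence $\mathcal{E}_{s,n/s}(u)\geq C(n)^{-1}\varepsilon_0^{n/s}$. The main obstacle is making this lower bound uniform in $s\in(0,1]$: since $n/s$ is unbounded as $s\to 0^+$, a sub-unit threshold would cause $\varepsilon_0^{n/s}\to 0$. The resolution is to observe that a topologically nontrivial VMO map to $\S^\ell$ must genuinely wrap around its image on some ball, producing BMO-oscillation of order $1$ (a scale-free quantity); this lets one choose $\varepsilon_0\geq 1$, so that $\lambda := \inf_{s\in(0,1]} C(n)^{-1}\varepsilon_0^{n/s}>0$ furnishes the desired uniform constant.
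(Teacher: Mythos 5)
Your plan follows the same BMO-embedding route the paper hints at by citing \cite[Lemma 2.10]{kasia2020minimal}, and the first part (the double-Jensen estimate $\|u\|_{BMO(\S^n)}^{n/s}\leq C(n)\,\mathcal{E}_{s,n/s}(u)$ with $C(n)$ independent of $s$) is clean and correct. You also correctly isolate the actual subtlety: with a sub-unit nullhomotopy threshold $\varepsilon_0$ the bound $\mathcal{E}_{s,n/s}(u)\geq C(n)^{-1}\varepsilon_0^{n/s}$ degenerates as $s\to 0^+$.

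However, the resolution is asserted rather than proved. ``Must genuinely wrap around on some ball, producing BMO-oscillation of order $1$'' is heuristic, and ``of order $1$'' does not by itself yield $\varepsilon_0\geq 1$. The gap is real but short to fill. Since $|u|\equiv 1$, the reverse triangle inequality gives, for every geodesic ball $B$,
\[
\frac{1}{|B|}\int_B |u-\overline{u}_B|\,\dx \;\geq\; \frac{1}{|B|}\int_B \bigl| |u|-|\overline{u}_B|\bigr|\,\dx \;=\; 1-|\overline{u}_B|,
\]
so $\|u\|_{BMO(\S^n)}<1$ forces $|\overline{u}_{B(x,r)}|\geq 1-\|u\|_{BMO}>0$ for \emph{every} $x$ and $r$, not merely small $r$. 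Consequently $v_r\coloneqq \overline{u}_{B(\cdot,r)}/|\overline{u}_{B(\cdot,r)}|$ is a well-defined continuous family of maps $\S^n\to\S^\ell$ for all $r$ up to the diameter; by Brezis--Nirenberg it represents the homotopy class of $u$ for $r$ small and is a constant map for $r$ the diameter, hence $u$ is nullhomotopic. This makes $\varepsilon_0=1$ exactly the admissible threshold (independently of $\ell$), closing your argument and giving $\lambda=C(n)^{-1}$. Please replace the ``of order $1$'' heuristic with this computation.
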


In \cite[Theorem 1.2]{VS20} or \cite[Lemma 12.6.]{BM-book} the following is proven.
\begin{proposition}\label{pr:easyenergy}
Whenever $s \in (0,1]$ and $\alpha$ has a free homotopy group decomposition into $(\alpha_i)_{i=1}^N$ then
\[
 \#_s \alpha \leq \sum_{i=1}^{N}\#_s \alpha_i.
\]
\end{proposition}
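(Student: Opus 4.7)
The plan is a classical gluing construction that realises the free homotopy sum by placing near-minimizers of each $\alpha_i$ on pairwise disjoint geodesic balls of $\S^n$. The key tool is the conformal invariance of $\mathcal{E}_{s,n/s}$ at the critical exponent $p=n/s$: the Gagliardo seminorm is unchanged under precomposition with a M\"obius transformation of $\S^n$.

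Fix $\varepsilon>0$. For each $i=1,\dots,N$ I would choose $u_i\in\alpha_i$ with $\mathcal{E}_{s,n/s}(u_i)\le\#_s\alpha_i+\tfrac{\varepsilon}{2N}$, together with pairwise disjoint geodesic balls $B_i=B_r(P_i)\subset\S^n$ and a base point $p_0\in\S^\ell$. Composing with a rotation of $\S^\ell$ (which is isotopic to the identity and hence preserves the homotopy class), I may assume each $u_i$ takes the value $p_0$ at the south pole of $\S^n$. Let $\phi_i\colon\S^n\to\S^n$ be a M\"obius transformation mapping a tiny sub-ball of $B_i$ onto the complement of a small neighbourhood of the south pole; then $\tilde u_i\coloneqq u_i\circ\phi_i$ satisfies $\mathcal{E}_{s,n/s}(\tilde u_i)=\mathcal{E}_{s,n/s}(u_i)$ by conformal invariance, still represents $\alpha_i$, and is $W^{s,n/s}$-close to the constant $p_0$ outside an arbitrarily small portion of $B_i$.

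A standard Lipschitz cutoff in a thin annulus near $\partial B_i$ then produces $v_i\in\alpha_i$ equal to $\tilde u_i$ on an inner sub-ball and equal to $p_0$ outside $B_i$, with energy cost $o(1)$ as the concentration parameter shrinks. Setting
\[
v(x)\coloneqq\begin{cases} v_i(x), & x\in B_i,\\ p_0, & x\in\S^n\setminus\bigcup_i B_i,\end{cases}
\]
one obtains a continuous representative of $\alpha=\alpha_1+\dots+\alpha_N$ as a disjointly supported connected-sum-type gluing, cf.\ \cite{VS20,BM-book}. Splitting the Gagliardo double integral according to the partition $\S^n=\bigcup_i B_i\cup\bigl(\S^n\setminus\bigcup_i B_i\bigr)$ reproduces $\sum_i\mathcal{E}_{s,n/s}(v_i)$ as the sum of within-ball and ball-to-complement contributions, while the only additional terms are the cross-interactions
\[
\int_{B_i}\int_{B_j}\frac{|v_i(x)-v_j(y)|^{n/s}}{|x-y|^{2n}}\,dx\,dy,\quad i\ne j,
\]
which, because $|v_i-v_j|\le 2$ and $|x-y|\ge\dist(B_i,B_j)>0$, are bounded by $Cr^{2n}/\dist(B_i,B_j)^{2n}$ and thus vanish as $r\to 0$.

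The main technical obstacle is the nonlocal bookkeeping: unlike for local Dirichlet-type energies, the Gagliardo seminorm couples distant points, so one must carefully verify that both the Lipschitz cutoff producing $v_i$ and the inter-ball cross-terms contribute $o(1)$ in the strong-concentration limit. Both reduce to standard fractional Sobolev estimates for maps that are nearly constant on large subsets. Letting $r\to 0$ and then $\varepsilon\to 0$ then yields $\#_s\alpha\le\sum_i\#_s\alpha_i$.
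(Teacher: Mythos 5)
The paper itself gives no proof of this proposition: it is cited directly from \cite{VS20} and \cite{BM-book}. Your blind sketch is the standard Sacks--Uhlenbeck-style concentration-and-gluing argument, which is essentially how those references establish the inequality, so the overall approach is the right one. Conformal invariance of the critical Gagliardo seminorm plus disjointly-supported ``bubbles'' glued at a common base point is indeed the key mechanism.

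There are, however, two points where the sketch as written papers over a genuine difficulty. First, the cutoff. The linear interpolation produced by a Lipschitz cutoff, say $\eta\tilde u_i+(1-\eta)p_0$, does not take values in $\S^\ell$; one must compose with the nearest-point retraction onto $\S^\ell$, and for that retraction to be defined and Lipschitz on the relevant region you need $\tilde u_i$ to be \emph{uniformly} close to $p_0$ on the cutoff annulus --- the claim that $\tilde u_i$ is merely ``$W^{s,n/s}$-close to $p_0$'' is too weak, since $W^{s,n/s}$-smallness does not control $L^\infty$-oscillation. Second, and relatedly, a general near-minimizer $u_i\in W^{s,n/s}(\S^n,\S^\ell)$ need not even be continuous, so the assertion that the concentrated map $\tilde u_i=u_i\circ\phi_i$ is close to $p_0$ outside a tiny disc is not automatic. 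The standard remedy, which should be stated, is to first replace $u_i$ by a \emph{smooth} representative of $\alpha_i$ (density of smooth maps in $W^{s,\frac ns}(\S^n,\S^\ell)$ within each homotopy class, \`a la Brezis--Nirenberg); after that, the composed map is $C^0$-close to $p_0$ away from the concentration ball and the cutoff-plus-projection step produces a genuine $\S^\ell$-valued competitor with controlled energy loss. With these two insertions the argument is complete; the bookkeeping of the inter-ball cross-terms is fine, since the balls $B_i=B_r(P_i)$ have pairwise distances bounded below by constants depending only on the $P_i$'s as $r\to0$.
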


In the case of  $W^{\frac12,2}(\S^1,\S^1)$ maps, minimizers exists for each degree and their exact energy is known. Precisely, by \cite[Lemma 3.1]{BMRS14}, see also \cite[Theorem 12.9 \& Theorem 12.10]{BM-book},
\begin{theorem}\label{th:BBMfors=half} For maps from $\S^1$ to $\S^1$ we have
\begin{equation}\label{eq:BBMfors=half}
 \#_{\frac12} d = 4\pi^2 |d|.
\end{equation}
Moreover, $\#_{\frac12} d$ is attained for all $d \in \Z$.
\end{theorem}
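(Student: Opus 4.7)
The plan is to reduce everything to a Fourier-series computation. Identify $\S^1$ with $\R/2\pi\Z$ and expand $u \in W^{1/2,2}(\S^1,\S^1)$ as $u(\theta) = \sum_{k \in \Z} a_k e^{ik\theta}$. The Gagliardo integrand is translation invariant and diagonalizes over the orthonormal basis of characters: the cross terms in $\int\!\!\int \frac{|u(\theta)-u(\phi)|^2}{|e^{i\theta}-e^{i\phi}|^2}\,d\theta\,d\phi$ vanish by orthogonality, while the diagonal contributions reduce, via $|e^{i\alpha}-e^{i\beta}|^2 = 4\sin^2(\tfrac{\alpha-\beta}{2})$ together with the Fej\'er-kernel identity $\int_0^{2\pi} \sin^2(kt/2)/\sin^2(t/2)\,dt = 2\pi|k|$, to the Parseval-type formula
\[
[u]^2_{W^{1/2,2}(\S^1,\S^1)} = 4\pi^2 \sum_{k \in \Z} |k|\,|a_k|^2.
\]
Applied to $u_d(\theta)\coloneqq e^{id\theta}$, this already gives the upper bound $\#_{1/2} d \leq [u_d]^2_{W^{1/2,2}} = 4\pi^2|d|$ and exhibits $u_d$ as an explicit candidate minimizer in the degree-$d$ class.

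For the matching lower bound I would invoke the Fourier representation of the degree for $\S^1$-valued $H^{1/2}$-maps,
\[
\deg u = \sum_{k \in \Z} k\,|a_k|^2.
\]
For smooth $u$ this is a one-line computation: the identity $|u|\equiv 1$ forces $u'\bar u$ to be purely imaginary, so $\deg u = \frac{1}{2\pi i}\int_0^{2\pi} u'\bar u\,d\theta$, and Parseval's theorem then produces $\sum_k k|a_k|^2$. Granted the formula, the trivial estimate $|k| \geq \pm k$ yields
\[
[u]^2_{W^{1/2,2}} \;=\; 4\pi^2\sum_k |k|\,|a_k|^2 \;\geq\; 4\pi^2 \Bigl|\sum_k k\,|a_k|^2\Bigr| \;=\; 4\pi^2 |\deg u|,
\]
which, combined with the previous paragraph, proves both \eqref{eq:BBMfors=half} and the fact that $\#_{1/2} d$ is attained by $u_d$.

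The main technical obstacle is justifying the Fourier degree formula for an arbitrary $u \in W^{1/2,2}(\S^1,\S^1)$, not merely for smooth $u$. This requires (i) density of smooth maps of fixed degree in the degree-$d$ component of $W^{1/2,2}(\S^1,\S^1)$, and (ii) continuity of the degree under $W^{1/2,2}$-strong convergence, which follows from the embedding $W^{1/2,2}(\S^1)\hookrightarrow VMO(\S^1)$ and the $VMO$-continuity of the degree (cf.\ \cite{BrezisNirenbergI}). With these ingredients one approximates $u$ by partial Fourier sums and passes to the limit in both sides of the identity. A pleasant feature is that attainment drops out for free from the explicit minimizer $u_d$, so the argument completely sidesteps the lack of weak-closedness of $\{v \in W^{1/2,2}(\S^1,\R^2):|v|=1 \text{ a.e.}\}$ under the weak $W^{1/2,2}$-topology.
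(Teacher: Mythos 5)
Your Fourier-series argument is correct and is essentially the standard proof --- the same one found in the sources the paper cites for this theorem (\cite[Lemma 3.1]{BMRS14} and \cite[Theorems 12.9--12.10]{BM-book}): diagonalize the Gagliardo seminorm as $4\pi^2\sum_k |k|\,|a_k|^2$ via the Fej\'er-kernel identity, combine with the degree formula $\deg u = \sum_k k\,|a_k|^2$, and note that $u_d(\theta)=e^{id\theta}$ saturates the resulting inequality $4\pi^2\sum_k|k||a_k|^2 \ge 4\pi^2\bigl|\sum_k k|a_k|^2\bigr|$. One small slip in your last paragraph: when extending the degree formula to a general $u\in W^{1/2,2}(\S^1,\S^1)$, you should approximate by smooth $\S^1$-valued maps (exactly what your item (i) provides, since smooth maps are dense in the critical space $W^{1/2,2}(\S^1,\S^1)$), not by the partial Fourier sums $\sum_{|k|\le N}a_k e^{ik\theta}$ --- these are not $\S^1$-valued and need not avoid the origin, so their topological degree is not directly available; with a smooth $\S^1$-valued approximating sequence the argument closes cleanly because both $u\mapsto\deg u$ (via the embedding into VMO) and $u\mapsto\sum_k k|a_k|^2$ (by Cauchy--Schwarz against the $H^{1/2}$-seminorm) are continuous under strong $H^{1/2}$-convergence.
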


By results in \cite{VS20} and \cite{BBM2} we obtain that if we know that the energy of a map is bounded then the map can belong only to a finite subgroup of $\pi_n(\S^l)$.
\begin{theorem}\label{th:finitehomotopy}
Fix $\Lambda > 0$ and let $0<s_0<s_1 <1$, $n,\ell \geq 1$ with either $(\ell,n) = (1,1)$ or $\ell \geq 2$. Then there exist a \emph{finite} subgroup $\mathcal{Q} \subset \pi_n(\S^\ell)$ such that the following holds:

Whenever for some $s \in (s_0,s_1)$ the map $u \in W^{s,\frac{n}{s}}(\S^n,\S^\ell)$ satisfies
\[
 [u]_{W^{s,\frac{n}{s}}(\S^n,\S^\ell)} \leq \Lambda
\]
then $u \in \mathcal{Q}$.
\end{theorem}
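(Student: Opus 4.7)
The plan is to combine the finite generation of $\pi_n(\S^\ell)$ with a Bourgain--Brezis--Mironescu-type inequality that bounds the ``free part'' of the homotopy invariant of $u$ by its $W^{s,n/s}$-energy, uniformly in $s \in (s_0, s_1)$. The torsion part of $\pi_n(\S^\ell)$ is finite automatically, so everything reduces to bounding the rank-one invariant.

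First I would use the structure of $\pi_n(\S^\ell)$: by Serre's theorem it is a finitely generated abelian group, so $\pi_n(\S^\ell) \cong \Z^r \oplus T$ for a finite torsion group $T$. Serre's finiteness theorem further gives $r \leq 1$, with equality precisely when $n = \ell$ (free generator: the Brouwer degree) or when $\ell = 2k$ is even and $n = 4k-1$ (free generator: the Hopf invariant). If $r = 0$ then $\pi_n(\S^\ell)$ is already finite and we may take $\mathcal{Q} = \pi_n(\S^\ell)$. If $r = 1$, writing $[u] = k \cdot e + \tau$ with $e$ the free generator, $k \in \Z$, and $\tau \in T$, finiteness of $T$ reduces the problem to a uniform bound on $|k|$ depending only on $\Lambda$, $n$, $\ell$, $s_0$, $s_1$.

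To produce this bound I would invoke \eqref{ineq:BBM} in the degree case,
\[
|k| = |\deg u| \leq C_{n,s}\,[u]_{W^{s,n/s}}^{n/s} \leq C_{n,s} \cdot \max\bigl(\Lambda^{n/s_0}, \Lambda^{n/s_1}\bigr),
\]
and its Hopf analogue from \cite{SVS20, PS23} in the Hopf case. Once $|k| \leq K$ for some $K = K(n, \ell, s_0, s_1, \Lambda)$, the class $[u]$ lies in the finite set $\mathcal{Q} \coloneqq \{k \cdot e + \tau \colon |k| \leq K,\ \tau \in T\} \subset \pi_n(\S^\ell)$, completing the proof.

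The main obstacle, and the only non-routine step, is establishing that the BBM-type constants $C_{n,s}$ can be chosen uniformly over the compact subinterval $s \in (s_0, s_1) \subset (0,1)$. This is not stated explicitly in the cited references but should follow from inspecting their proofs: the constants arise from the critical embedding $W^{s,n/s}(\S^n) \hookrightarrow \mathrm{VMO}(\S^n)$ and from integration-by-parts identities realizing the degree or Hopf invariant as an integral, and the associated constants depend continuously on $s$ throughout $(0,1)$. For the Hopf case this may require an extension of \cite{SVS20} beyond its stated $s$-range, for instance via the methods of \cite{PS23}; this is the most delicate point of the argument.
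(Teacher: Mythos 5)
Your proposal has a genuine gap that you have, to your credit, identified yourself: the Hopf case. The paper itself records that the Bourgain--Brezis--Mironescu-type inequality for the Hopf invariant from \cite{SVS20} is only available for $s \in (1 - \frac{1}{4n}, 1)$. If the interval $(s_0, s_1)$ in the theorem extends below $1 - \frac{1}{4n}$, your route has no bound on the free invariant, and neither \cite{SVS20} nor \cite{PS23} is cited in the paper as fixing this. Since the theorem has to hold on arbitrary compact subintervals of $(0,1)$, this is not a technicality: your argument simply does not cover all the cases it needs to.

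The paper's proof is structurally different precisely so as to avoid this. For $n = \ell$ it uses the BBM degree estimate exactly as you do, but for all $\ell \geq 2$ it does \emph{not} attempt to realize the homotopy invariant as an integral of $u$. Instead it shows, by a Chebyshev inequality and the trivial pointwise bound $|u(x)-u(y)| \leq 2$, that
\[
\int_{\S^n}\int_{\S^n} \chi_{\{|u(x)-u(y)|>\eps\}} \frac{\dx\,\dy}{|x-y|^{2n}}
\leq \eps^{-\frac{n}{s_0}} \, 2^{\frac{n}{s_0}-\frac{n}{s_1}} \,\Lambda^{n},
\]
uniformly in $s \in (s_0,s_1)$, and then invokes \cite[Theorem 1.4]{VS20}, which says that a bound on this level-set integral forces the homotopy class to lie in a finite subset of $\pi_n(\S^\ell)$. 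This mechanism is agnostic to the algebraic structure of $\pi_n(\S^\ell)$ --- it never mentions Serre's theorem, free rank, or torsion --- and in particular it gives the Hopf case for free, for all $s$. The moral is that controlling a VMO/level-set quantity is strictly more robust than controlling an integral formula for a specific homotopy invariant: the former is available by Chebyshev for all $s$, the latter is only known in restricted ranges. If you want to salvage your approach, you would need to prove a Hopf analogue of the BBM inequality valid down to $s_0$, which is an open strengthening of \cite{SVS20}; the paper's route sidesteps that entirely.
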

\begin{proof}
In the case when $n=\ell$ the assertion follows from the degree estimate in \cite[Theorem~0.6]{BBM2}, since for any $s > 0$ we have
\[
 \abs{\deg u} \aleq [u]_{W^{s,\frac{n}{s}}(\S^n,\S^n)}^{\frac{n}{s}}.
\]

If $\pi_1(\S^\ell) = \{0\}$, i.e., $\ell \geq 2$, we have for any $\eps > 0$
\[
\begin{split}
 &\int_{\S^n}\int_{\S^n} \chi_{\{|f(x)-f(y)|>\eps\}} \frac{1}{|x-y|^{2n}}\dx \dy\\
 &\leq \eps^{-\frac{n}{s_0}} \int_{\S^n}\int_{\S^n} \frac{|f(x)-f(y)|^{\frac{n}{s_0}}}{|x-y|^{2n}}\dx \dy \\
  &\leq \eps^{-\frac{n}{s_0}} 2^{\frac{n}{s_0}-\frac{n}{s_1}}\, \int_{\S^n}\int_{\S^n} \frac{|f(x)-f(y)|^{\frac{n}{s}}}{|x-y|^{2n}}\dx \dy \\
  &\leq \eps^{-\frac{n}{s_0}} 2^{\frac{n}{s_0}-\frac{n}{s_1}}\, \Lambda^{n}.
\end{split}
 \]
Hence the assumption in \cite[Theorem 1.4]{VS20} is satisfied and we may conclude.
\end{proof}

\subsection{Continuity for the \texorpdfstring{$s \mapsto W^{s,\frac{n}{s}}$}{Sobolev}-norm}
We need the following continuity result for the fractional Sobolev norm.

\begin{proposition}\label{pr:energyclose}
Fix $n,\ell \in \N$. Let $\Lambda > 0$, $t_0 > 0$, $t \in (t_0,1)$, and let $s_1 > t$. For any $\eps > 0$ there exists $\delta  =\delta(\eps,\Lambda,|s_1-t|,n)>0$ such that the following holds.

Assume that $u \colon \S^n \to \S^\ell$ satisfies
\[
[u]_{W^{s_1,\frac{n}{{s_1}}}(\S^n,\S^\ell)} \leq \Lambda.
\]
then
\[
 \sup_{r_1,r_2 \in (t-\delta,t+\delta)}\abs{[u]_{W^{r_1,\frac{n}{r_1}}(\S^n,\S^\ell)}^{\frac{n}{r_1}}-[u]_{W^{r_2,\frac{n}{r_2}}(\S^n,\S^\ell)}^{\frac{n}{r_2}}} \leq \eps.
\]
\end{proposition}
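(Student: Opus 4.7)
The plan is to prove that the map $F(r) := [u]_{W^{r,n/r}(\S^n,\S^\ell)}^{n/r}$ is Lipschitz in $r$ on the window $(t-\delta,t+\delta)$, with Lipschitz constant depending only on $\Lambda$, $t$, $s_1$, $n$ and not on the particular $u$. The crucial structural observation is that the singular weight in the Gagliardo double integral is $|x-y|^{-(n+r\cdot n/r)}=|x-y|^{-2n}$, which is \emph{independent of $r$}. Hence the $r$-dependence lives entirely in the exponent of $|u(x)-u(y)|$, and one can reduce everything to the identity $\partial_{\sigma}a^{\sigma}=a^{\sigma}\log a$.

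Writing $a=|u(x)-u(y)|\in[0,2]$, the fundamental theorem of calculus followed by Fubini gives
\[
F(r_1)-F(r_2)=\int_{n/r_2}^{n/r_1}\int_{\S^n}\int_{\S^n}\frac{|u(x)-u(y)|^{\sigma}\log|u(x)-u(y)|}{|x-y|^{2n}}\dx\dy\,d\sigma.
\]
I would then prove the pointwise estimate $|a^{\sigma}\log a|\leq C(t,\delta,s_1,n)\,a^{n/s_1}$ for every $a\in[0,2]$, valid whenever $\delta\in(0,s_1-t)$ and $\sigma\in[n/(t+\delta),n/(t-\delta)]$. On $a\in(0,1]$ this follows from the elementary inequality $x^{\alpha}|\log x|\leq 1/(e\alpha)$ applied with $\alpha=\sigma-n/s_1>0$, which absorbs the logarithmic singularity at $a=0$ into the factor $a^{n/s_1}$; on $a\in[1,2]$ the crude bound $a^{\sigma}|\log a|\leq 2^{\sigma-n/s_1}\log 2\cdot a^{n/s_1}$ suffices. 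Feeding this pointwise bound into the inner double integral and invoking the hypothesis $[u]_{W^{s_1,n/s_1}}^{n/s_1}\leq\Lambda^{n/s_1}$ yields
\[
|F(r_1)-F(r_2)|\leq C(t,\delta,s_1,n)\,\Lambda^{n/s_1}\,|n/r_1-n/r_2|,
\]
and a final choice of $\delta=\delta(\eps,\Lambda,s_1-t,n)$ makes the right-hand side $\leq \eps$ on the whole window.

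The essential use of the hypothesis $s_1>t$ is exactly to keep the excess power $\alpha=\sigma-n/s_1$ strictly positive and bounded below on $(t-\delta,t+\delta)$, so that $1/(e\alpha)$ remains a uniform constant; in other words, the extra regularity converts the would-be-divergent factor $|\log a|$ into something controlled pointwise by $a^{n/s_1}$, which is integrable against $|x-y|^{-2n}$ by assumption. There is no real obstacle beyond tracking constants — in particular, neither a near-diagonal cutoff, nor a compactness or Sobolev-embedding argument, is required.
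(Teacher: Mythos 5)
Your argument is correct. It shares with the paper's proof the essential observation: since the Gagliardo kernel $|x-y|^{-2n}$ is $r$-independent, one needs only a pointwise bound on $|a^{n/r_1}-a^{n/r_2}|$ (with $a=|u(x)-u(y)|\in[0,2]$) of the form $C\,a^{n/s_1}$, and the hypothesis $s_1>t$ is precisely what makes this possible. But the way you extract that bound is genuinely different. The paper factors $a^{n/r_i}=a^{n/s_1}\,a^{p_i}$ with $p_i=n/r_i-n/s_1>0$ and then invokes a standalone elementary lemma, \Cref{le:basic}, that $p\mapsto a^p$ is uniformly continuous on $[0,\Gamma]$ for $p$ in a compact subinterval of $(0,\infty)$; that lemma is proved by splitting $[0,\Gamma]$ at an $\eps$-dependent threshold, so it produces only a modulus of continuity. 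You instead differentiate in the exponent, writing $a^{n/r_1}-a^{n/r_2}=\int_{n/r_2}^{n/r_1} a^{\sigma}\log a\,d\sigma$, and absorb the logarithm via the pointwise bound $a^{\alpha}|\log a|\le 1/(e\alpha)$; this yields a genuine Lipschitz estimate in $n/r$, which is quantitatively sharper and eliminates the case split. One small point worth making explicit: your constant $C(t,\delta,s_1,n)$ depends on $\delta$, the very quantity you are about to choose, so there is a superficial appearance of circularity. In fact there is none, because $C$ only improves as $\delta$ shrinks (the lower bound $\alpha\ge n/(t+\delta)-n/s_1$ grows and the upper range of $\sigma$ shrinks), but it would be cleanest to first cap $\delta\le\min\{(s_1-t)/2,\,t/2\}$, thereby freezing $C$ as a function of $t,s_1,n$ alone, and only then shrink $\delta$ --- this is the role played in the paper's proof by the auxiliary $\bar s\in(t,s_1)$ and the lower bound $t>t_0$.
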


The proof is based on the following elementary lemma.
\begin{lemma}\label{le:basic}
For any $\eps > 0$, $\Gamma > 0$, $0<p_0<p_1$, there exists $\tilde \delta=\tilde{\delta}(p_0,p_1,\Gamma,\eps) > 0$ such that if $|p-q| < \tilde{\delta}$, $p,q \in [p_0,p_1]$ then
\begin{equation}\label{eq:ineqasdasd}
 |p-q| < \tilde{\delta} \quad \Rightarrow \quad |a^p-a^q| < \eps \quad \forall a \in [0,\Gamma].
\end{equation}
\end{lemma}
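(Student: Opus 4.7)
My plan is to split $[0,\Gamma] = [0,\eta] \cup [\eta,\Gamma]$ for some small $\eta > 0$ to be chosen, and to bound $|a^p - a^q|$ on each piece separately.

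On $[0,\eta]$, since $p, q \geq p_0 > 0$, the crude triangle-inequality bound
\[
|a^p - a^q| \leq a^p + a^q \leq 2 a^{p_0} \leq 2\eta^{p_0}
\]
holds for all $p,q \in [p_0, p_1]$. I would pick $\eta = (\eps/4)^{1/p_0}$ so that this contribution is at most $\eps/2$.

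On $[\eta, \Gamma]$, assuming without loss of generality $p \geq q$, I would write
\[
|a^p - a^q| = a^q \bigl|a^{p-q} - 1\bigr| = a^q \bigl|\exp\bigl((p-q)\log a\bigr) - 1\bigr|.
\]
The factor $a^q$ is bounded uniformly by $\max(1, \Gamma^{p_1})$, and $|\log a|$ is bounded on $[\eta, \Gamma]$ by some constant $M = M(\eta, \Gamma)$. Consequently $(p-q)\log a \to 0$ uniformly in $a \in [\eta, \Gamma]$ as $|p-q| \to 0$, and so does $\bigl|\exp((p-q)\log a) - 1\bigr|$. Choosing $\tilde\delta > 0$ sufficiently small (explicitly depending on $p_0, p_1, \Gamma, \eps$ through $\eta$ and $M$) then forces this piece to contribute at most $\eps/2$, which combined with Step~1 yields \eqref{eq:ineqasdasd}.

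The whole argument is elementary; the only point worth stressing is that the hypothesis $p_0 > 0$ is used crucially, as otherwise the uniformity in $a$ would fail near $a = 0$. Conceptually, the lemma is just the statement that $F(a,p) := a^p$, extended by $F(0,p) := 0$ for $p > 0$, is continuous on the compact rectangle $[0,\Gamma] \times [p_0,p_1]$ and therefore uniformly continuous; the splitting above is merely an explicit unpacking of that fact, so I do not foresee any real obstacle.
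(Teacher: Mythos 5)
Your proof is correct and follows essentially the same route as the paper's: split $[0,\Gamma]$ at a small threshold $\eta\sim\eps^{1/p_0}$, use $a^p+a^q\le 2a^{p_0}$ on the small piece, and use the exponential representation $a^{p-q}=\exp((p-q)\log a)$ on the remainder, where $\log a$ is bounded. The paper merely makes the final estimate explicit via $|1-e^t|\le|t|e^{|t|}$, whereas you invoke uniform convergence; both are the same idea.
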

\begin{proof}
We may assume that $\eps \in (0,1)$ and $\Gamma \geq 1$, $p_0 \leq p < q \leq p_1$.

Set $\sigma \coloneqq (\frac{1}{2}\, \eps)^{\frac{1}{p_0}} \in (0,1)$, then 
\[
 |a^p-a^q| < 2\, \sigma^{p_0} \leq \eps \quad \forall a \in [0,\sigma].
\]
Moreover with the inequality $|1-e^t| \leq |t| e^{|t|}$ we find for $\Lambda_\eps \coloneqq \max_{a \in [\sigma,\Gamma]} |\ln a|$,
\[
 |a^p-a^q| = a^p |1-a^{q-p}| \leq \Gamma^{p_1} |1-a^{q-p}| \leq |q-p|\, \Gamma^{p_1}\, \Lambda_\eps e^{\Lambda_\eps 2p_1} \quad \forall a \in [\sigma,\Gamma].
\]
So if we set 
\[
 \tilde{\delta} \coloneqq \frac{1}{\Gamma^{p_1} \Lambda_\eps e^{\Lambda_\eps 2p_1}}
\]
We have shown that 
\[
 |a^p-a^q| < \eps \quad \forall |p-q| < \tilde{\delta}, \quad a \in [0,\Gamma].
\]
\end{proof}

\begin{proof}[Proof of \Cref{pr:energyclose}]
Pick some $\bar{s} \in (t,s_1)$ and take $ \frac{t-t_0}{2} <\delta < \frac{\bar{s}-t}{2}$ to be specified later. The relation between the numbers is now
\[
0<t_0<t-\delta <t<t+\delta<\bar s<s_1<1.
\]
Fix $r_1,r_2\in (t-\delta,t+\delta)$ such that $r_2 > r_1$. We have
\begin{equation}\label{eq:tousebasiclemma}
\begin{split}
 &\abs{[u]_{W^{r_1,\frac{n}{r_1}}(\S^n,\S^\ell)}^{\frac{n}{r_1}}-[u]_{W^{r_2,\frac{n}{r_2}}(\S^n,\S^\ell)}^{\frac{n}{r_2}}}\\
 &\leq \int_{\S^n}\int_{\S^n} \frac{\abs{|u(x)-u(y)|^{\frac{n}{r_1}}- |u(x)-u(y)|^{\frac{n}{r_2}}}}{|x-y|^{2n}}\dx \dy\\
 &= \int_{\S^n}\int_{\S^n} \frac{|u(x)-u(y)|^{\frac{n}{s_1}} \abs{|u(x)-u(y)|^{\frac{n}{r_1}-\frac{n}{s_1}}- |u(x)-u(y)|^{\frac{n}{r_2}-\frac{n}{s_1}}} }{|x-y|^{2n}}\dx \dy.
 \end{split}
\end{equation}
Set now
\[
 a \coloneqq |u(x)-u(y)|, \quad p \coloneqq \frac{n}{r_1}-\frac{n}{s_1},\quad q \coloneqq \frac{n}{r_2}-\frac{n}{s_1}.
\]
Since $|u| \equiv 1$ we have $a \in [0,2]$. Also $p,q \in [p_0,p_1]$ for
\[
 p_0 = \frac{n}{t+\delta}-\frac{n}{s_1} \geq \frac{n}{\bar{s}} - \frac{n}{s_1}>0 \quad \text{and} \quad  p_1 = \frac{n}{t-\delta}-\frac{n}{s_1} \leq \frac{n}{t_0} - \frac{n}{s_1} <\infty.
\]
We observe
\[
 |p-q| =\frac{n}{r_1 r_2} \abs{r_2-r_1} \leq \frac{2n}{(t_0)^2} \delta.
\]
Hence choosing $\delta = \tilde{\delta}\frac{(t_0)^2}{2n}$, where $\tilde \delta $ is from \Cref{le:basic}, and combining \eqref{eq:tousebasiclemma} with \eqref{eq:ineqasdasd} we get
\[
\begin{split}
 \abs{[u]_{W^{r_1,\frac{n}{r_1}}(\S^n,\S^\ell)}^{\frac{n}{r_1}}-[u]_{W^{r_2,\frac{n}{r_2}}(\S^n,\S^\ell)}^{\frac{n}{r_2}}}
 &\leq \sup_{a \in [0,2]} \abs{a^p-a^q}\, \int_{\S^n}\int_{\S^n} \frac{|u(x)-u(y)|^{\frac{n}{s_1}}  }{|x-y|^{2n}}\dx \dy\\
 &\leq \eps\int_{\S^n}\int_{\S^n} \frac{|u(x)-u(y)|^{\frac{n}{s_1}}  }{|x-y|^{2n}}\dx \dy,
 \end{split}
\]
as desired.

\end{proof}

\section{Existence of minimizers and energy identity}
In this section we show how to deduce an energy identity using \cite{kasia2020minimal} and \cite{VS20}. We begin with recalling the following Lemma, which we will combine later with \Cref{th:finitehomotopy}.
\begin{lemma}[{\cite[Lemma 7.7]{kasia2020minimal}}]\label{le:sucks}
Fix $n,\ell \in \N$ with either $(\ell,n) = (1,1)$ or $\ell \geq 2$, and $s \in (0,1)$.
There is a number $\theta=\theta(s,n,\ell)$ such that the following holds\footnote{As mentioned before the case $s\le \frac12$, $n=1$ was not treated in \cite{kasia2020minimal} but is is covered in \cite{K2025}.}.
Let $\alpha \in \pi_{n}(\S^\ell) \setminus\{0\}$. Then either $\#_s \alpha$ is attained or for any $\delta > 0$ there exist $\alpha_1,\alpha_2 \in \pi_{n}(\S^\ell) \setminus \{0\}$ (possibly depending on $\delta$) such that $\alpha = \alpha_1 + \alpha_2$,
 \begin{equation}\label{eq:energydecomp}
  \#_s\alpha_1 + \#_s \alpha_2 \le \#_s \alpha + \delta,
  \end{equation}
  and
  \begin{equation}\label{eq:energydecrease}
  \quad \theta<\#_s \alpha_i < \#_s\alpha - \frac{\theta}{2}, \quad \text{ for } i=1,2.
 \end{equation}
\end{lemma}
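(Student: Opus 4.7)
The plan is to run a concentration--compactness analysis on a minimizing sequence for $\#_s\alpha$. I would start by taking $(u_k)\subset W^{s,n/s}(\S^n,\S^\ell)$ with $u_k\in\alpha$ and $\mathcal{E}_{s,n/s}(u_k)\to \#_s\alpha$, pass to a subsequence with weak $W^{s,n/s}$-limit $u_\infty$, and note that by lower semicontinuity $\mathcal{E}_{s,n/s}(u_\infty)\le \#_s\alpha$. Two scenarios then arise. If strong convergence holds --- possibly after first composing each $u_k$ with a M\"obius transformation of $\S^n$ to re-centre any concentrating energy, using that $\mathcal{E}_{s,n/s}$ is conformally invariant in this critical dimension --- then $u_\infty$ lies in $\alpha$ and attains $\#_s\alpha$, placing us in the first alternative of the lemma. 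Otherwise compactness fails and a splitting must be produced.

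In the non-compact case, I would invoke the fractional Sacks--Uhlenbeck bubbling developed in \cite{kasia2020minimal} (together with \cite{K2025} for $n=1$, $s\le 1/2$): one identifies a concentration point $x_0\in\S^n$ and scales $r_k\to 0$ along which the rescaled maps $u_k\circ T_{x_k,r_k}$, for appropriate conformal transformations $T_{x_k,r_k}$, converge weakly to a non-constant bubble $v$. An $\eps$-regularity estimate away from the concentration set, combined with an energy identity (in the spirit of \Cref{pr:easyenergy}), then yields a residual map $u$ such that $[v]+[u]=\alpha$ in $\pi_n(\S^\ell)$ and
\[
\mathcal{E}_{s,n/s}(v)+\mathcal{E}_{s,n/s}(u)\le \#_s\alpha+o(1)\qquad (k\to\infty).
\]
Setting $\alpha_1\coloneqq [v]$ and $\alpha_2\coloneqq [u]$, both are nontrivial: $v$ is nonconstant by construction of the bubble, and if $\alpha_2$ were zero then $\alpha_1=\alpha$, so iterating the bubbling on $u_k\circ T_{x_k,r_k}$ would produce an infinite chain of nontrivial bubbles with total energy bounded by $\#_s\alpha$, contradicting the uniform positive lower bound of \Cref{pr:minimalenergy}.

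To conclude, given $\delta>0$ I would choose the minimizing sequence close enough to $\#_s\alpha$ so that the $o(1)$ above is smaller than $\delta$, yielding \eqref{eq:energydecomp}. For \eqref{eq:energydecrease}, I would set $\theta\coloneqq \lambda(\ell,n)$ from \Cref{pr:minimalenergy}, so that the lower bound $\#_s\alpha_i\ge \theta$ is immediate; the upper bound follows from $\#_s\alpha_i\le \#_s\alpha+\delta-\#_s\alpha_{3-i}\le \#_s\alpha+\delta-\theta$ by taking $\delta<\theta/2$ (for larger $\delta$, apply the above with $\delta$ replaced by $\min(\delta,\theta/4)$). The hard part will be the bubbling step: showing that the energy is additive with no neck-energy loss and that the homotopy classes add correctly. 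In the fractional setting this rests on the conformal invariance of the $W^{s,n/s}$-seminorm together with an $\eps$-regularity theory in this space, both of which are the technical core of \cite{kasia2020minimal}.
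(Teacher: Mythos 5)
The paper does not prove this lemma: it is quoted verbatim from \cite[Lemma 7.7]{kasia2020minimal} (with the $n=1$, $s\le\frac12$ case supplied by \cite{K2025}) and used in the present work as a black box. Your sketch does describe the right overall shape of the argument carried out in that cited reference --- fractional Sacks--Uhlenbeck concentration--compactness along a minimizing sequence, with conformal invariance of $\mathcal{E}_{s,n/s}$, an $\eps$-regularity statement, and a no-neck energy identity producing the additive split in homotopy and energy --- so as a reconstruction of the imported result the strategy is sound.

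There is, however, a genuine gap in the step where you conclude that $\alpha_1$ and $\alpha_2$ are nontrivial. You write that $\alpha_1=[v]\neq 0$ ``because $v$ is nonconstant by construction of the bubble,'' but a nonconstant map $\S^n\to\S^\ell$ can of course be null-homotopic, so nonconstancy by itself gives nothing about $[v]$. The argument you then offer for $\alpha_2\neq 0$ --- an infinite chain of nontrivial bubbles --- implicitly assumes the bubbles are homotopically nontrivial, which is the very point at issue, so the reasoning is circular. The correct mechanism is quantitative: $\eps$-regularity forces the bubble to carry at least a fixed amount $\lambda'>0$ of energy, and the energy identity gives $\mathcal{E}(v)+\mathcal{E}(u)\le\#_s\alpha+o(1)$. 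If $[v]=0$ the residual is still in class $\alpha$ with energy $\le\#_s\alpha-\lambda'+o(1)<\#_s\alpha$, contradicting the definition of $\#_s\alpha$; if $[v]=\alpha$ then $\mathcal{E}(v)\ge\#_s\alpha$ and the residual has vanishing energy, so $v$ attains $\#_s\alpha$ and one is in the first alternative of the lemma rather than the splitting alternative. This case distinction, together with the uniform lower bound of \Cref{pr:minimalenergy}, is what makes both $\alpha_1,\alpha_2$ nontrivial and yields \eqref{eq:energydecrease} --- it is not automatic from the bubbling itself. (A minor additional fix: to secure the \emph{strict} inequality $\#_s\alpha_i>\theta$ you should take $\theta$ strictly below $\lambda$, e.g.\ $\theta=\lambda/2$, rather than $\theta=\lambda$.)
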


From \Cref{le:sucks} we can conclude the following existence and energy identity.
\begin{theorem}[Energy identity]\label{th:energyidentity}
Fix $n,\ell \in \N$, with either $(\ell,n) = (1,1)$ or $\ell \geq 2$, and $s \in (0,1)$. For each $\alpha \in \pi_{n}(\S^\ell) \setminus \{0\}$ there exists a finite sequence $(\alpha_i)_{i=1}^N \subset \pi_{n}(\S^\ell)\setminus \{0\}$ such that
\begin{enumerate}
 \item $\alpha = \sum_{i=1}^N \alpha_i$,
 \item $\#_s \alpha = \sum_{i=1}^N \#_s \alpha_i$,
 \item $\#_s \alpha_i$ are attained for each $i \in \{1,\ldots,N\}$.
\end{enumerate}
\end{theorem}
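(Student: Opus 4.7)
The strategy is to iterate Lemma~\ref{le:sucks} to build a finite $\eps$-approximate decomposition of $\alpha$ into homotopy classes with attained minimal energy, and then to extract a limit as $\eps \to 0$.

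First, fix $\eps > 0$ and let $\theta = \theta(s,n,\ell)$ be the constant from Lemma~\ref{le:sucks}. We construct a binary tree of decompositions rooted at $\alpha$: at each node labeled by a class $\beta$, if $\#_s\beta$ is attained we declare it a leaf; otherwise we apply Lemma~\ref{le:sucks} to split $\beta = \beta_1 + \beta_2$ with some slack $\delta > 0$ to be chosen, so that $\#_s\beta_1 + \#_s\beta_2 \leq \#_s\beta + \delta$ and $\#_s\beta_i < \#_s\beta - \theta/2$. Because the energy of every non-leaf satisfies $\#_s\beta > \theta$ (by the lemma) and strictly decreases by more than $\theta/2$ at each step, the tree has depth at most $D := \lceil 2\#_s\alpha/\theta \rceil$ and thus at most $2^{D+1}$ internal nodes. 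Choosing the uniform slack $\delta := \eps\cdot 2^{-D-1}$ at every internal node and summing telescopically yields a finite decomposition $\alpha = \sum_{i=1}^{N_\eps} \alpha_i^{\eps}$ with each $\#_s\alpha_i^\eps$ attained and
\[
\#_s\alpha \;\leq\; \sum_{i=1}^{N_\eps} \#_s\alpha_i^\eps \;\leq\; \#_s\alpha + \eps,
\]
the lower bound coming from Proposition~\ref{pr:easyenergy}.

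Next, we pass to the limit $\eps \to 0$. The upper bound gives $\sum_i \#_s\alpha_i^\eps \leq \#_s\alpha + 1$, which combined with the universal lower bound $\#_s\alpha_i^\eps \geq \lambda > 0$ from Proposition~\ref{pr:minimalenergy} shows that $N_\eps \leq (\#_s\alpha+1)/\lambda$ is uniformly bounded. Moreover, each $\alpha_i^\eps$ is attained by some map $u_i^\eps$ with $[u_i^\eps]_{W^{s,n/s}} \leq (\#_s\alpha+1)^{s/n}$; fixing $s_0, s_1$ with $0<s_0<s<s_1<1$, Theorem~\ref{th:finitehomotopy} then forces $\alpha_i^\eps$ to lie in a fixed \emph{finite} subset $\mathcal{Q} \subset \pi_n(\S^\ell)$. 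Hence the set of possible unordered decompositions of $\alpha$ produced by the construction is finite, and a pigeonhole argument furnishes a sequence $\eps_k \to 0$ along which $N_{\eps_k} \equiv N$ and the multiset $\{\alpha_i^{\eps_k}\}_i$ is constant, equal to some $(\alpha_1,\ldots,\alpha_N)$. Passing to the limit in the two-sided inequality yields $\sum_{i=1}^N \#_s\alpha_i = \#_s\alpha$, while attainedness of each $\#_s\alpha_i$ is preserved along the subsequence. This gives properties (1)--(3).

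The principal obstacle is to keep the iterated decompositions from blowing up in complexity as $\eps\to 0$: both the number of summands $N_\eps$ and their combinatorial type must stay within a finite range, otherwise no limit exists. The uniform quantum $\theta$ in Lemma~\ref{le:sucks} (giving a depth bound independent of the chosen slack) and the finiteness of bounded-energy homotopy classes via Theorem~\ref{th:finitehomotopy} are exactly the ingredients that make this compactness step work.
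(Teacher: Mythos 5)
Your proof is correct and takes essentially the same route as the paper's: iterating Lemma~\ref{le:sucks} with small slacks, bounding the number of summands via the energy quantum $\theta$, then using Theorem~\ref{th:finitehomotopy} to reduce to finitely many decompositions and extracting a constant subsequence as $\eps \to 0$, with Proposition~\ref{pr:easyenergy} supplying the reverse inequality. The only cosmetic difference is that you fix the depth bound $D$ a priori and use a uniform slack $\eps\cdot 2^{-D-1}$, whereas the paper uses geometrically decaying slacks $2^{-\ell}\tilde\delta$ at the $\ell$-th iteration and establishes termination separately; both bookkeepings deliver the same $\eps$-approximate decomposition.
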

\begin{proof}
Fix $\Lambda \coloneqq \#_s \alpha+1$.


Fix $\tilde \delta \in (0,1)$. If $\#_s \alpha$ is attained then we are done. If $\#_s \alpha$ is not attained, we apply \Cref{le:sucks} and decompose $\alpha = \alpha_1 + \alpha_2$ with
\[\#_s\alpha_1 + \#_s \alpha_2 \le \#_s \alpha + 2^{-1}\tilde \delta \quad \text{and}\quad \theta<\#_s \alpha_i < \#_s\alpha - \frac{\theta}{2} \text{ for }i=1,2.
\]
If both $i\in{1,2}$, $\#_s \alpha_i$ are not attained we finish, if at least one is not attained, say $\#_s \alpha_1$ we apply again \Cref{le:sucks} with $\delta = 2^{-2}\tilde \delta$. We decompose $\alpha_1=\alpha_{1,1} + \alpha_{1,2}$ and obtain
\begin{equation}\label{eq:energydecomposition1}
 \#_s\alpha_{1,1} + \#_s \alpha_{1,2} + \#_s\alpha_{2} \le \#_s \alpha_1 + \#_s\alpha_{2} +2^{-2}\tilde \delta+ 2^{-1}\tilde \delta \le \#_s\alpha +\sum_{i=1}^2 2^{-i}\tilde \delta
 \end{equation}
 with
 \begin{equation}\label{eq:energydecrease1}
 \theta<\#_s \alpha_{1,i} < \#_s\alpha_1 - \frac{\theta}{2} < \#_s\alpha - \theta,
 \end{equation}
where $\alpha = \alpha_{1,1} + \alpha_{1,2} + \alpha_2$. With an abuse of notation we relabel $\alpha = \alpha_{1} + \alpha_{2} + \alpha_3$.

We apply \Cref{le:sucks} iteratively, whenever the minimizer of one of the decomposed terms is not attained --- at $\ell$-th step we take $\delta = 2^{-\ell} \tilde\delta$ and obtain a decomposition into $\ell + 1$ terms with
\begin{equation}
 \sum_{i=1}^{\ell+1} \#_s \alpha_i \le \#_s \alpha + \sum_{i=1}^{\ell +1}2^{-i}\tilde\delta.
\end{equation}
By \eqref{eq:energydecrease} after $2^{\ell-1}$-iterations there is an $i\in \{1,\ldots,2^{\ell-1}+1\}$ for which
\begin{equation}\label{eq:enerdydecrease2}
\theta < \#_s \alpha_i < \#_s \alpha - \ell \frac{\theta}{2} = \Lambda -1 -\ell \frac{\theta}{2}.
\end{equation}
If however $\ell> \frac{2}{\theta}(\Lambda-1-\theta)$ we get a contradiction in \eqref{eq:enerdydecrease2}. Hence, we may iterate at most $2^{L-1}$ times for an $L=L(\theta,\Lambda)$ obtaining a decomposition into $N_{\tilde \delta}$ terms, where $N_{\tilde \delta}\le 2^{L-1}+1$ for all $\tilde \delta \in (0,1)$ and
%
\[
 \sum_{i=1}^{N_{\tilde \delta}} \alpha_i = \alpha
\]
such that $\#_s \alpha_i$ must be attained (otherwise we would have continued the iteration of \Cref{le:sucks}). Moreover,
\[
 \sum_{i=1}^{N_{\tilde \delta}}\#_s \alpha_i \leq \#_s \alpha + \tilde \delta.
\]
We want to let $\tilde \delta \to 0$. Let us stress that as of now the decomposition of $\alpha_i$ depends on $\tilde \delta$.

By \Cref{th:finitehomotopy},
\[
 \{\beta \in \pi_{n}(\S^\ell)\colon \quad \#_s \beta \leq \#_s \alpha + 1\}
\]
is a finite set. Hence, there is only a finite number of possibilities of $\alpha_i=\alpha_i(\tilde \delta)$. Thus, there exists a sequence $\tilde \delta_k \to 0$ such that $\alpha_i(\tilde \delta_k)=\alpha_i(\tilde \delta_j)$ for all $j,k\in\N$. Thus we have
\[
 \sum_{i=1}^{N}\#_s\alpha_{i} \leq \#_s\alpha + \tilde \delta_k,
\]
where the left-hand side does not depend on $\tilde \delta_k$ anymore. Letting $k \to \infty$ we obtain
\[
 \sum_{i=1}^{N}\#_s\alpha_{i} \leq \#_s\alpha.
\]
On the other hand, since $\sum_{i=1}^N \alpha_i = \alpha$ we have by \Cref{pr:easyenergy},
\[
 \#_s\alpha \leq \sum_{i=1}^{N}\#_s\alpha_{i}.
\]
Combining the two last statements we obtain
\[
 \#_s\alpha =\sum_{i=1}^{N}\#_s\alpha_{i}.
\]

\end{proof}
Let us remark in passing that another strategy to obtain \Cref{th:energyidentity} would be to extend the methods \cite{DK98}, which mostly rely on the conformal invariance of the norms involved, and thus should be applicable to our case.

\section{Globally improved regularity in the conformal scaling}
In this section we show that $W^{s,\frac ns}$-minimizers actually belong \emph{globally} to $W^{\Theta s,\frac{n}{\Theta s}}(\S^n,\S^\ell)$ for a $\Theta>1$. To do so we adapt the strategy of \cite{S15}. It seems to us that we use a somehow unique feature of the Gagliardo-seminorm and the fractional $p$-Laplacian (i.e., we do not see how the argument would work for the classical $p$-Laplace): we use intrinsically a feature of differential stability that was observed for the fractional $p$-Laplace but is not known for the classical $p$-Laplace, see \cite{KMS15,S16}.

\begin{theorem}\label{th:regularity}
Fix $n, \ell \in \N$.
For every $0<s_0 <s_1 < 1$ there exists $\Theta > 1$ and for any $\Lambda > 0$ there is a  constant $C(\Lambda) = C(n,\ell,s_0,s_1,\Lambda) > 0$ such that the following holds:

If $s_0<s<s_1$ and $u \in W^{s,\frac{n}{s}}(\S^n,\S^\ell)$ is a $W^{s,\frac{n}{s}}(\S^n,\S^\ell)$-minimizer in its own homotopy group with
\[
 [u]_{W^{s,\frac{n}{s}}(\S^n,\S^\ell)} \leq \Lambda
\]
then
\[
 [u]_{W^{\Theta s,\frac{n}{\Theta s}}(\S^n,\S^\ell)} \leq C(\Lambda).
\]
\end{theorem}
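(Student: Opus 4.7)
The plan is to combine a local higher-differentiability estimate for the Euler--Lagrange system, expressed in a conformally invariant (hence scaling-invariant) Sobolev space, with a covering argument on $\S^n$. The crucial improvement over the existing local theory of \cite{S15, MS18} is that the output must live in the same conformal scaling class $sp=n$ as the input, so that localized bounds can be summed across scales to a global bound depending only on $\Lambda$.

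First I would write down the Euler--Lagrange system for the constrained minimizer $u$: since $u$ minimizes $\mathcal{E}_{s,n/s}$ under the pointwise constraint $|u|=1$, the quantity $(-\lap)^s_{n/s} u$ lies in the normal bundle of $\S^\ell$ along $u$. A tangential-projection and H\'elein-type rewriting then casts this as a fractional $p$-harmonic system ($p=n/s$) with antisymmetric right-hand side. To this system I would apply the \emph{differential-stability} self-improvement for the fractional $p$-Laplace from \cite{KMS15, S16}, in the spirit of the regularity arguments of \cite{S15, MS18}, to establish a local estimate: there exist $\Theta=\Theta(n,\ell,s_0,s_1)>1$ and $\epsilon_0=\epsilon_0(n,\ell,s_0,s_1)>0$ such that for every ball $B_r(x_0)\subset\S^n$ with small local energy $[u]_{W^{s,n/s}(B_{2r}(x_0))}\leq\epsilon_0$ one has
\[
[u]_{W^{\Theta s, n/(\Theta s)}(B_r(x_0))}^{n/(\Theta s)} \aleq [u]_{W^{s, n/s}(B_{2r}(x_0))}^{n/s} + \mathrm{Tail}(u;x_0,r),
\]
with $\mathrm{Tail}$ a standard nonlocal tail integral of $|u|$ outside $B_r(x_0)$, uniformly bounded since $|u|\equiv 1$. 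The essential point is that both sides belong to the same conformal scale $sp=n$, which is precisely what the differential-stability feature of \cite{KMS15, S16} buys us for fractional operators and which, as the authors emphasise, has no known analogue for the classical local $p$-Laplace.

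To globalize I would exploit that $[u]_{W^{s,n/s}(\S^n)}^{n/s}\leq\Lambda^{n/s}$ is a finite double integral: by absolute continuity each $x_0\in\S^n$ admits a radius $r(x_0)>0$ with $[u]_{W^{s,n/s}(B_{2r(x_0)}(x_0))}\leq\epsilon_0$, and a Besicovitch covering then extracts a family $\{B_{r_i}(x_i)\}$ of bounded overlap whose dilates cover $\S^n$. Summing the local estimates, the near-diagonal contribution is bounded via finite overlap by $C\,[u]_{W^{s,n/s}(\S^n)}^{n/s}\leq C\Lambda^{n/s}$, while the far-diagonal contribution (where $|x-y|$ is bounded below) is controlled trivially by $|u|\equiv 1$. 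The hard part will be the local step: upgrading the existing $C^{0,\alpha}$ arguments of \cite{S15, MS18}, whose smallness-of-BMO hypothesis does not survive conformal rescaling, into a genuinely scaling-invariant self-improving Sobolev estimate. This requires a careful bookkeeping of exponents throughout the Gehring-style iteration and makes essential use of the fractional differential-stability feature of \cite{KMS15, S16}, which has no known counterpart in the classical local $p$-Laplace setting.
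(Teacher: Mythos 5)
Your plan contains two genuine gaps, one conceptual and one in the globalization.

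First, you lean on the differential-stability result of \cite{KMS15,S16} as the engine for the local step, but the paper explicitly rules this out: the stability result of \cite{S16} adjusts only the differentiability parameter and not the integrability, so it exits the conformal scaling class $sp=n$ and cannot yield a bound in $W^{\Theta s, n/(\Theta s)}$. The whole point of the paper's Proposition~\ref{pr:iwaniec} is to prove a \emph{new} two-parameter Iwaniec-type stability statement in which differentiability and integrability are varied simultaneously along the curve $sp=n$; this is achieved by a Hodge-type decomposition $|u(x)-u(y)|^{-\eps}(u(x)-u(y))=A(x)-A(y)+G(x,y)$, where $A$ solves an auxiliary fractional Laplace equation, together with a holomorphic interpolation/Schwarz-lemma argument (Proposition~\ref{pr:smallness}) that shows $G$ is $O(\eps)$-small in the correct conformally scaled norm. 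This mechanism has no analogue in what you cite, and replacing it with a Gehring-style iteration is not justified.

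Second, the covering step does not close. You propose to choose, for each $x_0$, a radius $r(x_0)$ making $[u]_{W^{s,n/s}(B_{2r(x_0)})}\le\eps_0$ and then sum a Besicovitch family of local estimates. But these radii, and hence the cardinality of the cover and the lower bound on $\min_i r_i$, depend on the specific minimizer $u$ and not only on $\Lambda$. The near-diagonal contributions do sum to $C\Lambda^{n/s}$ by bounded overlap, but the (uniformly bounded) tail contributions sum to a quantity proportional to the number of balls, and the off-diagonal part of $[u]_{W^{\Theta s,n/(\Theta s)}(\S^n)}$ is controlled only by $(\min_i r_i)^{-n}$ via $|u|\equiv 1$. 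Neither quantity is controlled by $\Lambda$ alone. This is exactly the obstruction the paper points to when discussing the $BMO$-smallness hypothesis of \cite{S15,MS18}: a local small-energy hypothesis necessarily destroys uniformity in $u$. The paper avoids it entirely by establishing the estimates globally on $\R^n$ — first a global duality estimate on the Euler--Lagrange operator (Proposition~\ref{pr:improvedest}), obtained by splitting the potential $T_t u$ into $u\cdot T_t u$ and $u\wedge T_t u$ and using the equation for the tangential part, and then the global two-parameter Iwaniec stability of Proposition~\ref{pr:iwaniec} — rather than by patching together local ones.
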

\Cref{th:regularity} is a consequence of \Cref{pr:improvedest} and \Cref{pr:iwaniec} below.

\begin{remark}
\leavevmode
\begin{enumerate}
\item The study of the regularity of fractional harmonic maps was initiated by Da Lio--Rivi\`{e}re in their celebrated papers \cite{DaLio-Riviere-1Dmfd,DaLio-Riviere-1Dsphere}.
 \item The argument below applies generally to critical points, not only minimizers. We only state the a priori estimate versions, since (local) higher regularity was discussed in \cite{kasia2020minimal}.
\item In \cite{S15} it was proven that minimizers as in \Cref{th:regularity} are H\"older continuous, see also \cite{MS18}. Observe that, however, there is no hope to prove
\[
 [u]_{C^{0,\alpha}(\S^n,\S^\ell)} \aleq [u]_{W^{s,\frac{n}{s}}(\S^n,\S^\ell)}.
\]
Indeed, this can be simply disproved by conformal rescaling, concentrating the map $u$ into one-point. The right-hand side is conformally invariant, and thus does not change. However, the continuity on the left-hand side becomes worse and worse.
\item Similarly, there is no hope to obtain
\[
 [u]_{W^{s_1,p_1}(\S^n,\S^\ell)} \aleq [u]_{W^{s,\frac{n}{s}}(\S^n,\S^\ell)}.
\]
whenever $s_1p_1 > n$.
\item While \Cref{th:regularity}, does not imply continuity, and therefore seems to be a weaker result than \cite{S15,MS18}, it has the crucial advantage of being a global result on all of $\S^n$.
\end{enumerate}
\end{remark}

By the conformal invariance of the energy we may replace the domain $\S^n$ by $\R^n$ and assume $u\in \dot{W}^{s,\frac{n}{s}}(\R^n,\S^\ell)$.

\subsection{Improved global estimates for harmonic maps}

We begin with the Euler--Lagrange equations for $W^{s,\frac ns}$-harmonic maps, see, e.g., \cite{S15}. For any $\varphi \in L^\infty \cap \dot{W}^{s,\frac{n}{s}}(\R^n,\R^{\ell+1})$ a critical point $u \in W^{s,\frac{n}{s}}(\R^n,\S^\ell)$ of the energy $\mathcal{E}_{s,\frac ns}$ satisfies the equation
\begin{equation}\label{eq:pdemin}
 \int_{\R^n} \int_{\R^n} \frac{|u(x)-u(y)|^{\frac ns-2} (u(x)-u(y)) \wedge (u(x) \varphi(x)-u(y)\varphi(y))}{|x-y|^{2n}} \dx \dy = 0.
\end{equation}
Here $\wedge\colon \R^\ell \times \R^\ell$ denotes the wedge product in $\R^{\ell+1}$,
\[
 (u \wedge v)_{ij} = u^i v^j - u^j v^i, \quad \text{$i,j \in \{1,\ldots,\ell+1\}$}.
\]
The crucial result is that the equation for fractional $W^{s,\frac ns}$-harmonic maps improves globally.
\begin{proposition}\label{pr:improvedest}
Let $0 < s_0 < s_1 < 1$. There exists a $\theta \in (0,1)$ such that the following holds:

For any $s \in (s_0,s_1)$ and any $u\in\dot{W}^{s,\frac{n}{s}}(\R^n,\S^\ell)$, which is minimizing $W^{s,\frac ns}$-harmonic map in its own homotopy group we have
\[
 [(-\lap)^s_\frac ns u]_{W^{-\theta s,\frac{n}{n-\theta s}}(\R^n)} \leq C(s_0,s_1,n) [u]_{W^{s,\frac ns}(\R^n)}^{\frac ns},
\]
that is for any $\psi \in \dot{W}^{\theta s,\frac{n}{\theta s}}(\R^n,\R^{\ell+1})$,
\begin{equation}\label{eq:asdlkjhslf2}
\begin{split}
 \int_{\R^n} \int_{\R^n} &\frac{|u(x)-u(y)|^{\frac ns-2} (u(x)-u(y))\, \brac{\psi(x)-\psi(y)}}{|x-y|^{2n}}\dx \dy\\
 &\leq C(s_0,s_1,n) [u]_{W^{s,\frac ns}(\R^n)}^{\frac ns} [\psi]_{W^{\theta s,\frac{n}{\theta s}}(\R^n)}.
\end{split}
 \end{equation}
\end{proposition}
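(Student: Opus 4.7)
The strategy follows Schikorra \cite{S15}, exploiting the interplay between the wedge structure of the Euler--Lagrange equation and the $\S^\ell$-constraint. Fix $\psi \in \dot{W}^{\theta s, n/(\theta s)}(\R^n, \R^{\ell+1})$ and set
\[
I \coloneqq \int_{\R^n}\int_{\R^n} \frac{|u(x)-u(y)|^{\frac ns - 2}(u(x)-u(y)) \cdot (\psi(x)-\psi(y))}{|x-y|^{2n}} \dx\dy;
\]
we aim at $|I| \aleq [u]_{W^{s,n/s}}^{n/s}\,[\psi]_{W^{\theta s, n/(\theta s)}}$. The first step is to replace $\psi$ by its tangential projection $\varphi(x) \coloneqq \psi(x) - (u(x)\cdot \psi(x))\,u(x)$, which satisfies $u(x)\cdot\varphi(x)=0$ and, via a fractional Leibniz estimate and the critical embedding $W^{s,n/s} \hookrightarrow \mathrm{BMO}$, lies in $L^\infty \cap \dot W^{s,n/s}$; thus $\varphi$ is admissible in \eqref{eq:pdemin}. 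A bilinear expansion of $(u(x)-u(y))\wedge(u(x)\varphi(x)-u(y)\varphi(y))$, together with $u(x)\cdot\varphi(x)=0$, rewrites this wedge as $(u(x)-u(y))\cdot(\varphi(x)-\varphi(y))$ plus a remainder quadratic in $u$-differences of the form $|u(x)-u(y)|^{2}\cdot(\text{pointwise bounded})$.

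The second step is to reinsert the normal component. Writing $\psi = \varphi + (u\cdot\psi)u$ in $I$ and invoking the algebraic identity
\[
u(x)\cdot (u(x)-u(y)) = \tfrac12 |u(x)-u(y)|^2,
\]
which encodes $|u|\equiv 1$, the normal contribution in $I$ also becomes quadratic in $u$-differences:
\[
N[\psi] \;\sim\; \int_{\R^n}\int_{\R^n} \frac{|u(x)-u(y)|^{\frac ns}\,\bigl((u\cdot\psi)(x) - (u\cdot\psi)(y)\bigr)}{|x-y|^{2n}} \dx\dy.
\]
Combining the two steps gives $I = R[\varphi] + N[\psi]$, where both terms are quadratic in $u$-differences and linear in a difference quotient of $\psi$ (respectively $u\cdot\psi$). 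A H\"older-type estimate in Triebel--Lizorkin (equivalently Besov/Gagliardo) spaces, coupled with a fractional Leibniz rule to decouple $u$ from $\psi$ inside $u\cdot\psi$, then produces the claimed bound for suitable $\theta < 1$.

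The hard part is to obtain a \emph{strict} gain $\theta < 1$ (and not merely $\theta = 1$) while keeping all constants uniform for $s \in (s_0,s_1)$. The nonlinear weight $|u(x)-u(y)|^{\frac ns - 2}$ is singular and cannot be treated as a multiplier, but the compensation coming from $|u|\equiv 1$ and the wedge structure produces a Hardy-space-like cancellation that upgrades the natural $L^1$ integrability of the quadratic remainders to a negative-order fractional Sobolev bound, \`a la Coifman--Rochberg--Weiss three-commutator estimates. This upgrade is precisely the self-improving differential stability of the fractional $p$-Laplacian established in \cite{KMS15,S16}, which is unavailable for the classical $p$-Laplacian and is what makes the fractional setting amenable to this argument. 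The uniformity in $s$ follows since the relevant commutator, Triebel--Lizorkin, and Riesz-potential constants depend continuously on $s$ on the compact subinterval $[s_0,s_1] \subset (0,1)$.
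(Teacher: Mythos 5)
Your proposal reads the statement correctly and takes the expected ``$S15$-style'' route of splitting into components along and orthogonal to $T_u\S^\ell$, but there are concrete errors, and the hard technical step is misattributed.

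First, Step~1 is circular. You project $\psi$ onto the tangent bundle and claim $\varphi(x)=\psi(x)-(u(x)\cdot\psi(x))u(x)$ is admissible in \eqref{eq:pdemin}. But \eqref{eq:pdemin} only holds a priori against $L^\infty\cap\dot W^{s,n/s}$-variations, while $\psi\in\dot W^{\theta s,\frac{n}{\theta s}}$ with $\theta<1$ has strictly \emph{lower} smoothness; a fractional Leibniz rule puts $(u\cdot\psi)u$ at best in a space with differential order $\theta s$, not $s$. The whole content of the proposition is to show that the weak form of the Euler--Lagrange equation extends to the less regular test class $\dot W^{\theta s,\frac{n}{\theta s}}$, and you cannot assume that extension in order to prove it. Second, Step~2 miscomputes the normal contribution. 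Writing $u(x)\cdot(u(x)-u(y))=\tfrac12|u(x)-u(y)|^2$ and $u(y)\cdot(u(x)-u(y))=-\tfrac12|u(x)-u(y)|^2$ gives
\[
(u(x)-u(y))\cdot\bigl[(u\cdot\psi)(x)u(x)-(u\cdot\psi)(y)u(y)\bigr]
 = \tfrac12\,|u(x)-u(y)|^2\bigl[(u\cdot\psi)(x)+(u\cdot\psi)(y)\bigr],
\]
a \emph{sum}, not the difference $(u\cdot\psi)(x)-(u\cdot\psi)(y)$ you wrote. With the sum there is no difference-quotient left on the $\psi$-side, the putative bound degenerates to $\|u\cdot\psi\|_{L^\infty}\,[u]^{n/s}_{W^{s,n/s}}$, and no $[\psi]_{W^{\theta s,n/(\theta s)}}$-estimate follows. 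The paper avoids this exactly by working with the Riesz-potential tester $T_tu(z)$ and the three-point quantity $u(x)+u(y)-2u(z)$, whose cancellation is what the multi-linear estimates feed on; a naive two-point split of $\psi$ does not reproduce it.

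Finally, the attribution of the gain $\theta<1$ to Hardy-space/commutator cancellation and the ``self-improving differential stability of the fractional $p$-Laplacian'' from \cite{KMS15,S16} is misplaced. That stability mechanism is the content of the \emph{separate} Proposition~\ref{pr:iwaniec} (Iwaniec-type complex-interpolation/Hodge argument), which converts the estimate of the present proposition into genuine higher $W^{r,n/r}$-regularity. Here, the gain $\theta<1$ is obtained purely from the multi-linear Riesz-potential estimates in the three regimes $\{|x-y|\aleq\min\}$, $\{|x-z|\aleq\min\}$, $\{|y-z|\aleq\min\}$, combined with the pointwise bound $|u(x)-u(y)|\aleq|x-y|^r(\mathcal M\laps r u(x)+\mathcal M\laps r u(y))$ and, for the tangential part, a commutator decomposition via $H_{\Ds t}$ plugged into the Euler--Lagrange equation. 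Your sketch does not supply any of this machinery, so the crucial step remains unproved.
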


In the proof below we will frequently work with the fractional Laplacian $\laps{s}$ which can be defined as
\[
 \laps{s} f = c_{s} \mathcal{F}^{-1} \brac{|\xi|^s \mathcal{F} f},
\]
where $\mathcal{F}$ is the Fourier transform. When $s \in (0,1)$, with a different constant, we also have the representation
\[
 \laps{s} f(x) = c_s \int_{\R^n} \frac{f(y)-f(x)}{|x-y|^{n+s}} \dy.
\]
The inverse of the fractional Laplacian is the Riesz potential $\lapms{s}$, defined as
\[
 \lapms{s} f = c_{s} \mathcal{F}^{-1} \brac{|\xi|^{-s} \mathcal{F} f},
\]
or, for $s \in (0,n)$,
\begin{equation}\label{eq:rieszpotdef}
 \lapms{s} f(x) = c_{s} \int_{\R^n} |x-y|^{s-n} f(y) \dy.
\end{equation}
For mapping properties of the Riesz potential we refer the reader to standard literature, e.g., \cite{Stein,GrafakosCF,GrafakosMF, RunstSickel}.

In order to prove \Cref{pr:improvedest} we consider the following potential introduced in \cite{S15}:
\begin{equation}\label{eq:potential}
 T_{t} u(z) \coloneqq \int_{\R^n} \int_{\R^n} \frac{|u(x)-u(y)|^{\frac ns-2} (u(x)-u(y))\, (|x-z|^{t-n} - |y-z|^{t-n})}{|x-y|^{2n}} \dx \dy.
\end{equation}
Observe that for $t < s$ we have by the representation of the Riesz potential $I^t$, \eqref{eq:rieszpotdef},
\[
 \int_{\R^n} T_t u(z) \varphi(z) \dz =c\int_{\R^n} \int_{\R^n} \frac{|u(x)-u(y)|^{\frac ns-2} (u(x)-u(y))\, (\lapms{t}\varphi(x) - \lapms{t}\varphi(y))}{|x-y|^{2n}} \dx \dy.
\]
From the definition of $T_t$ and H\"older's inequality we have,
\begin{equation}\label{eq:TtestimateRiesz}
 \abs{\int_{\R^n} T_t u(z) \varphi(z) \dz} \aleq [u]_{W^{s,\frac ns}(\R^n)}^{\frac ns-1}\, [\lapms{t} \varphi]_{W^{s,\frac ns}(\R^n)}.
\end{equation}
Thus, if $t<s$, $T_{t} u$ is a tempered distribution for $u \in W^{s,\frac ns}(\R^n)$.

Observe that even as a distribution we have
\begin{equation}\label{eq:Ttintoorthandtang}
 \|T_t u\|_{L^{\frac{n}{n-t}}(\R^n)} \aleq \|u \cdot T_t u\|_{L^{\frac{n}{n-t}}(\R^n)} + \|u \wedge T_t u\|_{L^{\frac{n}{n -t}}(\R^n)},
\end{equation}
whenever the right-hand side is finite. In the next two lemmas we will estimate separately both terms on the right-hand side of \eqref{eq:Ttintoorthandtang}: the orthogonal projection $u \cdot T_t u$ and the tangential projection $u \wedge T_t u$ (orthogonal and tangential are meant with respect to the tangent space $T_u \S^{\ell}$).

\begin{lemma}\label{la:reg:ucdot}
Let $0 < s_0 < s_1 < 1$. There exists a $\theta \in (0,1)$ and a constant $C=C(s_0,s_1,n,\ell)>0$ such that the following holds:

For any $s \in (s_0, s_1)$, there exists $t < \theta s$ such that if $u$ is a $W^{s,\frac{n}{s}}(\R^n,\S^\ell)$-minimizing harmonic map in its own homotopy group, then for $T_t$ as in \eqref{eq:potential}
\begin{equation}\label{eq:orthogonalpart}
 \|u \cdot T_t u\|_{L^{\frac{n}{n-t}}(\R^n)} \leq C\, [u]_{W^{s,\frac{n}{s}}(\R^n)}^{\frac{n}{s}}.
\end{equation}

\end{lemma}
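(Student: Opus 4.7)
The plan is to control the orthogonal projection $u\cdot T_t u$ using \emph{only} the pointwise constraint $|u|=1$, without invoking the Euler--Lagrange equation \eqref{eq:pdemin} --- the equation will instead be reserved for the tangential part in the companion lemma. The starting point is a pointwise algebraic rewriting of the integrand of \eqref{eq:potential}. Substituting $u(z) = u(x) + (u(z)-u(x))$ against $|x-z|^{t-n}$ and $u(z) = u(y) + (u(z)-u(y))$ against $-|y-z|^{t-n}$, using the sphere identity $2u(x)\cdot(u(x)-u(y)) = |u(x)-u(y)|^2$ (and the symmetric one at $y$), and symmetrising in $x\leftrightarrow y$, one finds
\[
 u(z)\cdot T_t u(z) \;=\; c\,\lapms{t}g(z) \;+\; 2R(z),
\]
where $g(x)\coloneqq \int_{\R^n} |u(x)-u(y)|^{n/s}|x-y|^{-2n}\dy$ is the pointwise Gagliardo energy density and
\[
 R(z) \;\coloneqq\; \int_{\R^n}\int_{\R^n} \frac{|u(x)-u(y)|^{\frac{n}{s}-2}(u(z)-u(x))\cdot(u(x)-u(y))\,|x-z|^{t-n}}{|x-y|^{2n}}\dx\dy
\]
is a commutator term in which the extra factor $u(z)-u(x)$ provides additional cancellation as $z\to x$.

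For the first term note $\|g\|_{L^1(\R^n)}=[u]_{W^{s,n/s}}^{n/s}$, but the endpoint Hardy--Littlewood--Sobolev embedding $\lapms{t}\colon L^1\to L^{n/(n-t)}$ is only of weak type. To bypass this, one invokes the self-improving \emph{differential stability} of minimizing fractional $p$-harmonic maps from \cite{KMS15,S16}: minimizers belong to $W^{s',n/s}$ for some $s'=s'(s_0,s_1)>s$, with a constant uniform in $s\in(s_0,s_1)$. This upgrades the integrability of the Gagliardo density enough that strong-type HLS applies and yields $\|\lapms{t}g\|_{L^{n/(n-t)}}\aleq [u]_{W^{s,n/s}}^{n/s}$ for every $t<\theta s$, with $\theta=\theta(s_0,s_1)\in(0,1)$ determined by the stability gain. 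For $R(z)$, Cauchy--Schwarz gives $|(u(z)-u(x))\cdot(u(x)-u(y))|\le |u(z)-u(x)|\,|u(x)-u(y)|$; Hölder's inequality in the triple integral then splits off $|u(z)-u(x)|$, which is absorbed via a fractional Sobolev embedding of $u$, while the remaining density factor is handled exactly as for $\lapms{t}g$, possibly after further restricting $\theta$.

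The main obstacle is that the needed integrability upgrade for $g$ must be \emph{global, scale-invariant, and uniform} in $s\in(s_0,s_1)$: only then can one convert the endpoint $L^1\to L^{n/(n-t),\infty}$ weak-type Riesz bound into the strong-type bound required by \eqref{eq:orthogonalpart}. This is exactly the intrinsic feature of the fractional $p$-Laplace alluded to in the section preamble, and is not known for the classical $p$-Laplacian; verifying that the estimates of \cite{KMS15,S16} can be packaged in such a globally uniform way is the technical core of the argument.
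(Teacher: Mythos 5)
Your decomposition creates a spurious main term, and the proposed fix for it is circular. Writing $u(z)=u(x)+(u(z)-u(x))$ against $|x-z|^{t-n}$ and $u(z)=u(y)+(u(z)-u(y))$ against $-|y-z|^{t-n}$ produces, after the sphere identity, the \emph{sum}
\[
\frac12\int_{\R^n}\int_{\R^n}\frac{|u(x)-u(y)|^{\frac ns}\bigl(|x-z|^{t-n}+|y-z|^{t-n}\bigr)}{|x-y|^{2n}}\dx\dy,
\]
i.e. $c\,\lapms{t}g(z)$, with no cancellation. This term is only in $L^1$, and you correctly observe that the endpoint Riesz bound $I^t\colon L^1\to L^{n/(n-t)}$ is weak-type only. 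But the proposed repair --- importing an integrability upgrade from \cite{KMS15,S16} --- cannot work here: (i) it is circular, since higher regularity of $W^{s,n/s}$-minimizers in a scale-invariant scale is exactly what \Cref{th:regularity} (via \Cref{pr:improvedest}, which uses this lemma) is trying to establish; and (ii) the paper explicitly notes that \cite{S16} ``does not apply in our situation since it only considers stability in the differential direction, without adjusting the integrability,'' so the upgrade you invoke, $u\in W^{s',n/s}$ with $p=n/s$ fixed, is precisely the form that does not suffice.

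The spurious main term is an artifact of the chosen split. Since $|u|\equiv1$ gives $(u(x)-u(y))\cdot(u(x)+u(y))=0$, one has the exact identity
\[
(u(x)-u(y))\cdot u(z)=-\tfrac12\,(u(x)-u(y))\cdot\bigl(u(x)+u(y)-2u(z)\bigr),
\]
so \emph{all} of $u\cdot T_tu$ is a commutator --- there is no $\lapms{t}g$ term to estimate. Your term $R(z)$ secretly contains a $-c\,\lapms{t}g(z)$ piece that cancels the one you extracted; by not exploiting the second sphere relation you have manufactured an endpoint problem that is not present. With the commutator form, one uses the pointwise bound $|u(x)-u(y)|\aleq|x-y|^r\bigl(\mathcal M\laps{r}u(x)+\mathcal M\laps{r}u(y)\bigr)$ and a three-regime split of the kernel $\bigl||x-z|^{t-n}-|y-z|^{t-n}\bigr|$ to put everything into the form of products of Riesz potentials of $\mathcal M\laps{r}u$ at admissible exponents; strong-type HLS and the Maximal Theorem then close the estimate with constants depending only on $s_0,s_1,n$. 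No Euler--Lagrange equation and no a priori regularity upgrade is needed for this part.
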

\begin{proof}
We argue similarly as in the proof of \cite[Lemma 6.5]{S15}.
We note that since $|u|=1$ we have \[(u(x) - u(y))\cdot u(z)= -\frac12(u(x) - u(y))\cdot (u(x)+u(y)-2u(z))\] and hence
\[
 \abs{u \cdot T_t u(z)} \aleq \int_{\R^n} \int_{\R^n} \frac{|u(x)-u(y)|^{\frac ns-1} |u(x)+u(y)-2u(z)|\, \abs{|x-z|^{t-n} - |y-z|^{t-n}}}{|x-y|^{2n}} \dx \dy.
\]
We observe that for $r \in (0,1)$, we have by \cite[Proposition 6.6]{ArminARMA},
\begin{equation}\label{eq:Prop6.6insomething}
 |u(x)-u(y)|\aleq |x-y|^{r}\, \brac{\mathcal{M} \laps{r} u(x) + \mathcal{M} \laps{r} u(y)},
\end{equation}
where $\mathcal{M}$ denotes the Hardy-Littlewood maximal function.

We follow the proof in \cite[Proposition 6.3]{S15} but replace the use of \cite[Proposition 6.2]{S15} by \eqref{eq:Prop6.6insomething} and consider the three regimes (similarly as in \cite[Proposition 6.1]{S15}):
\[
\begin{split}
&\{|x-y| \aleq \min\{|x-z|,|y-z|\}\}, \quad \text{ in this case $|x-z| \aeq |y-z|$};\\
&\{|x-z| \aleq \min\{|y-z|,|x-y|\}\}, \quad \text{ in this case $|y-z| \aeq |x-y|$};\\
&\{|y-z| \aleq \min\{|x-z|,|x-y|\}\}, \quad \text{ in this case $|x-z|\aeq |x-y|$}.
 \end{split}
\]
we obtain for $\tilde{t} \in (0,t)$ with $r+\tilde{t} \in (0,1)$

\begin{equation}\label{eq:threeuestimate}
\begin{split}
& |u(x)+u(y)-2u(z)|\, \abs{|x-z|^{t-n} - |y-z|^{t-n}}\\
&\aleq
\brac{\mathcal{M} \laps{r} u(x)+\mathcal{M} \laps{r} u(y)+\mathcal{M} \laps{r} u(z)}|x-y|^{r+\tilde t}k_{t-\tilde t,t}(x,y,z),
 \end{split}
 \end{equation}
where $\kappa_{\alpha,\gamma}(x,y,z)$ is given by
\begin{equation}\label{eq:eqeqeqeqeqee}
\begin{split}
 \kappa_{\alpha,\gamma}(x,y,z) &= \min\{|x-z|^{\alpha-n},|y-z|^{\alpha-n}\}\\
 &\quad + \brac{\frac{|y-z|}{|x-y|}}^{\gamma-\alpha}|y-z|^{\alpha-n}\chi_{\{|x-z| \aleq \min\{|y-z|,|x-y|\}\}}\\
 &\quad + \brac{\frac{|x-z|}{|x-y|}}^{\gamma-\alpha}|x-z|^{\alpha-n}\chi_{\{|y-z| \aleq \min\{|x-z|,|x-y|\}\}}.
\end{split}
 \end{equation}
 Moreover, we have by \eqref{eq:Prop6.6insomething}
 \begin{equation}\label{eq:plappart}
  |u(x)-u(y)|^{\frac ns-1} \aleq |x-y|^{r(\frac ns -1)}\brac{\brac{\mathcal M \laps{r}u(x)}^{\frac ns -1} + \brac{\mathcal M \laps{r}u(y)}^{\frac ns -1}}.
 \end{equation}
Combining \eqref{eq:plappart} with \eqref{eq:threeuestimate} we obtain for a $\varphi \in C_c^\infty(\R^n)$, $\|\varphi\|_{L^{\frac{n}{t}}(\R^n)} \leq 1$
\begin{equation}\label{eq:aslkvxc}
\begin{split}
&\|u \cdot T_t u\|_{L^{\frac{n}{n-t}}(\R^n)} \\
&\aleq \int_{\R^n} u(z) \cdot T_t u(z) \varphi(z)\dz \\
&\aleq\int_{\R^n} \int_{\R^n} \int_{\R^n}\frac{|u(x)-u(y)|^{\frac ns-1} |u(x)+u(y)-2u(z)|\, \abs{|x-z|^{t-n} - |y-z|^{t-n}}}{|x-y|^{2n}} \abs{\varphi(z)} \dx \dy \dz\\
&\aleq \int_{\R^n} \int_{\R^n} \int_{\R^n} |x-y|^{r\frac ns+\tilde{t}-n-n} k_{t-\tilde t,t}(x,y,z)U_r(x,y,z) \abs{\varphi(z)}\dx \dy\dz,
\end{split}
 \end{equation}
where
\[
\begin{split}
 &U_r(x,y,z)\\
 &\coloneqq \brac{\brac{\mathcal{M} \laps{r} u(x)}^{\frac ns-1} + \brac{\mathcal{M} \laps{r} u(y)}^{\frac ns-1} } \brac{\mathcal{M} \laps{r} u(x)+\mathcal{M} \laps{r} u(y)+\mathcal{M} \laps{r} u(z)}.
\end{split}
 \]
Assuming $r\frac{n}{s}+\tilde{t}-n > 0$ we estimate further \eqref{eq:aslkvxc} with the help of \cite[Proposition 6.4]{S15} and H\"{o}lder's inequality
\begin{equation}\label{eq:estimatesofI}
\begin{split}
&\|u \cdot T_t u\|_{L^{\frac{n}{n-t}}(\R^n)} \\
&\aleq \max_{t_1+t_2+t_3 = r\frac ns -n+t} \int_{\R^n}\, \lapms{t_1} \brac{\mathcal{M} \laps{r} u(z)}^{\frac ns-1}\, \lapms{t_2} \brac{\mathcal{M} \laps{r} u(z)}\, \lapms{t_3} \abs{\varphi(z)} \dz\\
&\aleq \max_{t_1+t_2+t_3 = r\frac ns-n+t} \|\lapms{t_1} \brac{\mathcal{M} \laps{r} u}^{\frac ns-1}\|_{L^{\frac{n}{r\frac ns-r-t_1}}(\R^n)}\, \|\lapms{t_2} \brac{\mathcal{M} \laps{r} u}\|_{L^{\frac{n}{r-t_2}}(\R^n)} \|\lapms{t_3}|\varphi|\|_{L^{\frac{n}{t-t_3}}(\R^n)}.
\end{split}
\end{equation}
Assuming $r\frac{n}{s}-r-t_1 \geq -r-t+n > 0$, $t_2 \leq r\frac{n}{s}-n+t < r$, $t_3 \leq r\frac{n}{s}-n+t < t$
we apply twice Sobolev's inequality and the Maximal Theorem to get
\begin{equation}\label{eq:estimatesofIcontinued}
\begin{split}
\|u \cdot T_t u\|_{L^{\frac{n}{n-t}}(\R^n)}
&\aleq \|\brac{\mathcal{M} \laps{r} u}^{\frac ns-1}\|_{L^{\frac{n}{r\frac ns-r}}(\R^n)}\, \|{\mathcal{M} \laps{r} u}\|_{L^{\frac{n}{r}}(\R^n)} \|\varphi\|_{L^{\frac{n}{t}}(\R^n)}\\
&\aleq \|\laps{r} u\|_{L^{(\frac ns-1)\frac{n}{r\frac ns-r}}(\R^n)}^{\frac ns-1}\, \|{\laps{r} u}\|_{L^{\frac{n}{r}}(\R^n)} \|\varphi\|_{L^{\frac{n}{t}}(\R^n)}\\
&= \|\laps{r} u\|_{L^{\frac{n}{r}}(\R^n)}^{\frac ns} \|\varphi\|_{L^{\frac{n}{t}}(\R^n)}\\
&\aleq [u]_{W^{s,\frac ns}(\R^n)}^{\frac ns}.%
\end{split}
\end{equation}
In the last line we used $r<s$.

The conditions on $r$ and $t$ used in estimates \eqref{eq:estimatesofI} and \eqref{eq:estimatesofIcontinued} are
\[
r < s, \quad  n<\frac{n}{s}r+t <n+r, \quad r+t < n.
\]
If $n=1$ we choose $r=t$, then any $t \in (\frac{s}{1+s},\min\{s,\frac{1}{2}\})$ is admissible, so we can pick
\[
 t \coloneqq \frac{\frac{s}{1+s}+\min\{s,\frac{1}{2}\}}{2} \leq s \frac{\frac{1}{1+s}+1}{2} \leq s \frac{\frac{1}{1+s_0}+1}{2}\coloneqq s\theta.
 \]
%
Observe that we do not need to make a distinction between the case $\frac{n}{s} < 2$ and $\frac{n}{s} \geq 2$.

If $n \geq 2$ it is even easier, since $r+t<n$ becomes a trivial condition.
This finishes the proof.

%

\end{proof}

Now we estimate the second part on the right-hand side of \eqref{eq:Ttintoorthandtang}.
\begin{lemma}\label{la:reg:uwedge}
Let $0 < s_0 < s_1 < 1$. There exists a $\theta \in (0,1)$ such that the following holds:

For any $s \in (s_0, s_1)$, there exists $t < \theta s$ such that if $u$ is a $W^{s,\frac{n}{s}}(\mathbb{R}^n,\mathbb{S}^\ell)$-minimizing harmonic map in its own homotopy group, then for $T_t$ as in \eqref{eq:potential}
\begin{equation}\label{eq:tangentialpart}
 \|u \wedge T_t u\|_{L^{\frac{n}{n-t}}(\R^n)} \aleq [u]_{W^{s,\frac{n}{s}}(\R^n)}^{\frac{n}{s}}.
\end{equation}
\end{lemma}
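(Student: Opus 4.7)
The proof exploits the minimizing property via the Euler--Lagrange equation \eqref{eq:pdemin}. Using $v\wedge v=0$ together with the antisymmetry of $\wedge$, the identity
\[
(u(x)-u(y))\wedge(u(x)\varphi(x)-u(y)\varphi(y)) = u(x)\wedge u(y)\,(\varphi(x)-\varphi(y))
\]
recasts \eqref{eq:pdemin} in the cleaner form
\[
\int_{\R^n}\int_{\R^n}\frac{|u(x)-u(y)|^{\frac{n}{s}-2}\,u(x)\wedge u(y)\,(\varphi(x)-\varphi(y))}{|x-y|^{2n}}\dx\dy = 0
\]
for any admissible $\varphi$. This is the key reformulation: the tangential part of the equation becomes a linear testing statement against $\varphi$-increments.

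By $L^{\frac{n}{n-t}}$--$L^{\frac{n}{t}}$ duality it suffices to bound $\int u\wedge T_tu\cdot\psi\,\dz$ uniformly over $\psi$ with $\|\psi\|_{L^{n/t}(\R^n)}\le 1$. Since $|x-z|^{t-n}$ is precisely the kernel of the Riesz potential $I^t$, Fubini gives
\[
\int u\wedge T_tu\cdot\psi\,\dz = c\int\int\frac{|u(x)-u(y)|^{\frac{n}{s}-2}(u(x)-u(y))\wedge\bigl[I^t(u\psi)(x)-I^t(u\psi)(y)\bigr]}{|x-y|^{2n}}\dx\dy.
\]
Applying the reformulated Euler--Lagrange equation with $\varphi=I^t\psi$ --- after the usual truncation/approximation, as a priori $I^t\psi\in\dot W^{s,n/s}\cap\mathrm{BMO}$ rather than $L^\infty$ --- I can subtract $u(x)I^t\psi(x)-u(y)I^t\psi(y)$ from the inner bracket without changing the value of the integral. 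What remains is the commutator $H:=I^t(u\psi)-u\,I^t\psi$:
\[
\int u\wedge T_tu\cdot\psi\,\dz = c\int\int\frac{|u(x)-u(y)|^{\frac{n}{s}-2}(u(x)-u(y))\wedge\bigl[H(x)-H(y)\bigr]}{|x-y|^{2n}}\dx\dy.
\]

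A H\"older inequality in Gagliardo form now yields
\[
\Bigl|\int u\wedge T_tu\cdot\psi\,\dz\Bigr|\le [u]_{W^{s,\frac{n}{s}}(\R^n)}^{\frac{n}{s}-1}\,[H]_{W^{s,\frac{n}{s}}(\R^n)},
\]
and the lemma reduces to the Coifman--McIntosh--Meyer-type commutator estimate
\[
[I^t(u\psi)-u\,I^t\psi]_{W^{s,\frac{n}{s}}(\R^n)}\aleq [u]_{W^{s,\frac{n}{s}}(\R^n)}\,\|\psi\|_{L^{n/t}(\R^n)}.
\]
I would establish this by the same three-zone splitting of $\{|x-y|,|x-z|,|y-z|\}$ used in the proof of \Cref{la:reg:ucdot}, combined with the pointwise maximal bound \eqref{eq:Prop6.6insomething} and the standard $L^p$-mapping properties of the Riesz potential $I^t$. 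The restriction $t<\theta s$, with some $\theta<1$ depending only on $s_0,s_1,n$, emerges at this final step, as the requirement that all intermediate exponents lie in the admissible ranges of Sobolev embedding and the Hardy--Littlewood maximal theorem.

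The main obstacle is this commutator estimate. It is precisely the ``fractional Leibniz gain'' for $I^t$ alluded to in the opening remarks of this section --- a feature the classical gradient lacks --- that makes the argument go through, and this gain is what ultimately pins down the admissible range of $t$ and hence the value of $\theta$.
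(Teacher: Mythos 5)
Your algebraic reformulation of the Euler--Lagrange equation and the reduction to a commutator are correct, and the starting point (duality, Fubini, EL-subtraction with $\varphi=I^t\psi$) matches the paper's. The gap is in the H\"older step
\[
\Bigl|\int_{\R^n} u\wedge T_tu\,\psi\Bigr|\le [u]_{W^{s,\frac{n}{s}}(\R^n)}^{\frac{n}{s}-1}\,[H]_{W^{s,\frac{n}{s}}(\R^n)}.
\]
This throws away the wedge structure, and the resulting commutator estimate $[H]_{W^{s,n/s}}\aleq[u]_{W^{s,n/s}}\|\psi\|_{L^{n/t}}$ does \emph{not} follow from the three-zone splitting. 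To see the obstruction, decompose (with $k_x=|x-z|^{t-n}$, $k_y=|y-z|^{t-n}$)
\[
H(x)-H(y)=c\int_{\R^n}\bigl(k_x-k_y\bigr)\Bigl(u(z)-\tfrac{u(x)+u(y)}{2}\Bigr)\psi(z)\dz \; -\; c\,\bigl(u(x)-u(y)\bigr)\,\tfrac{I^t\psi(x)+I^t\psi(y)}{2}.
\]
The first integral has the second-order increment $u(z)-\tfrac{u(x)+u(y)}{2}$ and yields to the maximal-function and three-zone argument. The second term does not: $I^t\psi$ is only $\mathrm{BMO}$ (not $L^\infty$) for $\psi\in L^{n/t}$, so $(u(x)-u(y))\bigl(I^t\psi(x)+I^t\psi(y)\bigr)$ cannot be controlled in $W^{s,n/s}$ purely in terms of $[u]_{W^{s,n/s}}$ and $\|\psi\|_{L^{n/t}}$. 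Equivalently: $\Ds{t}H=u\Ds{t}\varphi-\Ds{t}(u\varphi)$ is a \emph{two-term} commutator, which contains the ungained piece $\varphi\Ds{t}u$; only the symmetric \emph{three-term} commutator $H_{\Ds{t}}(\varphi,u)$ enjoys the bilinear gain used in the paper's \eqref{eq:threetermestimatethird}.

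The rescue is precisely the wedge cancellation that the H\"older step discarded: $\text{diff}_{\frac ns}u(x,y)\wedge(u(x)-u(y))=0$ annihilates the bad term \emph{before} any norm estimate, leaving only the good integral --- and at that point your three-zone plan works as intended. This is exactly the trick the paper uses (subtracting $\tfrac12(u(x)-u(y))(\varphi(x)+\varphi(y))$ in \eqref{eq:threetermestimatefirst}), just with the roles of $u$ and $\varphi=I^t\psi$ swapped in the second-order-increment vs.\ $L^{n/t}$-weight decomposition, a swap the paper itself flags after \eqref{eq:threetermestimatefirst}. So: apply the wedge cancellation first, then H\"older, then the three-zone argument. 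Finally, the $R$-truncation of $I^t\psi$ that you wave at is not cosmetic in this endpoint setting --- the tail contribution must be shown to vanish, as the paper does through the $\varphi_{1,R},\varphi_{2,R}$ split.
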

\begin{proof}
We argue as in \cite[Proof of Lemma 3.5]{S15}.

By duality, there is some $\psi \in C_c^\infty(\R^n)$, $\|\psi\|_{L^{\frac{n}{t}}(\R^n)}\le 1$,
\[
 \|u \wedge T_t u\|_{L^{\frac{n}{n-t}}(\R^n)} \aleq \int_{\R^n} u \wedge T_t u\, \psi.
\]
Take $R \gg 1$ so that $\supp \psi \subset B(0,R)$. Let $\eta_R\in C_c^\infty(B(0,2R))$ be a cut-off function such that $\eta_R\equiv 1$ in $B(0,R)$ and set
\[
\begin{split}
 \varphi_{1,R} &\coloneqq \eta_R \lapms{t} \psi,\\
 \varphi_{2,R} &\coloneqq (1-\eta_R) \lapms{t} \psi.
\end{split}
 \]
Then
\[
 \int_{\R^n} u \wedge T_t u\, \psi = \int_{\R^n} u \wedge T_t u\, \Ds{t} \varphi_{1,R} + \int_{\R^n} u \wedge T_t u\, \Ds{t} \varphi_{2,R}.
\]
We observe that with a constant independent of $R \gg 1$
\[
 \|\Ds{t} \varphi_{1,R}\|_{L^{\frac{n}{t}}(\R^n)} \aleq \|\psi\|_{L^{\frac{n}{t}}(\R^n)} \leq 1,
\]
hence
\[
 \begin{split}
 \int_{\R^n} u \wedge T_t u\, \psi
 &\leq \sup_{\varphi \in C_c^\infty(\R^n),\, \|\Ds{t} \varphi\|_{L^{\frac{n}{t}}(\R^n)} \aleq 1}  \int_{\R^n} u \wedge T_t u\, \Ds{t} \varphi\\
 &\quad +\limsup_{R \to \infty}\abs{\int_{\R^n} u \wedge T_t u\, \Ds{t} \varphi_{2,R}}.
 \end{split}
\]
For the second term on the right-hand side we observe that similarly as in \eqref{eq:TtestimateRiesz}, for suitably small $\eps > 0$
\[
 \begin{split}
 \abs{\int_{\R^n} u \wedge T_t u\, \Ds{t} \varphi_{2,R}}
&\aleq [u]_{W^{s,\frac{n}{s}}(\R^n)}^{\frac ns-1}\, [\lapms{t} (u\wedge \Ds{t} \varphi_{2,R})]_{W^{s,\frac{n}{s}}(\R^n)}\\
&\aleq [u]_{W^{s,\frac{n}{s}}(\R^n)}^{\frac ns-1}\, \|\Ds{s-t+\eps} (u\wedge \Ds{t} \varphi_{2,R})\|_{L^{\frac{n}{s+\eps}}(\R^n)}\\
&\aleq[u]_{W^{s,\frac{n}{s}}(\R^n)}^{\frac ns-1}\, (1+[u]_{W^{s,\frac{n}{s}}(\R^n)})\,  \|\Ds{s+\eps} \varphi_{2,R}\|_{L^{\frac{n}{s+\eps}}(\R^n)},
 \end{split}
\]
where in the second estimate we used an embedding in Triebel--Lizorkin spaces, see
\cite[Theorem 2.2.3]{RunstSickel}.

Now we observe that, due to the support of $1-\eta_R$ and $\psi$ we have for some $\sigma > 0$
\[
\begin{split}
 \|\Ds{s+\eps} \varphi_{2,R}\|_{L^{\frac{n}{s+\eps}}(\R^n)}
 &\aleq R^{-s-\eps} \|\lapms{t} \psi\|_{L^{\frac{n}{s+\eps}}} + \|(1-\eta_R) \laps{s+\eps-t} \psi\|_{L^{\frac{n}{s+\eps}}}\\
 &\aleq R^{-\sigma} C(\supp \psi) \|\psi\|_{L^{\frac{n}{t}}} \xrightarrow{R \to \infty} 0.
\end{split}
 \]

Thus, for some $\varphi \in C_c^\infty(\R^n)$, $\|\Ds{t} \varphi\|_{L^{\frac{n}{t}}(\R^n)} \leq 1$,
\begin{equation}\label{eq:threetermestimate}
\begin{split}
 \|u \wedge T_t u\|_{L^{\frac{n}{n-t}}(\R^n)}
 &\aleq \int_{\R^n} u \wedge T_t u\, \Ds{t} \varphi\\
 &=-\int_{\R^n} \varphi\, \Ds{t} u \wedge T_t u + \underbrace{\int_{\R^n} \Ds{t}(\varphi\, u) \wedge T_t u}_{\overset{\eqref{eq:pdemin}}{=}0}-\int_{\R^n} H_{\Ds{t}} (\varphi,u) \wedge T_t u.
  \end{split}
\end{equation}
Here we use the Leibniz term notation
\[
 H_{\Ds{t}}(f,g) \coloneqq \Ds{t} (fg) - f\Ds{t} g - \Ds{t}f\ g.
\]
For the last term of \eqref{eq:threetermestimate} we observe
that similarly as in \eqref{eq:TtestimateRiesz} for a suitably small $\eps  >0$

\begin{equation}\label{eq:threetermestimatethird}
\begin{split}
 \int_{\R^n} H_{\Ds{t}} (\varphi,u) \wedge T_t u
 &\aleq[u]_{W^{s,\frac{n}{s}}(\R^n)}^{\frac ns-1}\, [\lapms{t} H_{\Ds{t}} (\varphi,u)]_{W^{s,\frac{n}{s}}(\R^n)}\\
&\aleq[u]_{W^{s,\frac{n}{s}}(\R^n)}^{\frac ns-1}\, \|\Ds{s-t+\eps} H_{\Ds{t}} (\varphi,u)\|_{L^{\frac{n}{s+\eps}}(\R^n)}\\
&\aleq[u]_{W^{s,\frac{n}{s}}(\R^n)}^\frac ns\, \|\Ds{t} \varphi\|_{L^{\frac{n}{t}}(\R^n)}.
\end{split}
 \end{equation}
The last line works as long $s-t +\eps < t$.

For the remaining estimates we abbreviate
\[
\text{diff}_{\frac ns}u(x,y)=|u(x)-u(y)|^{\frac ns-2}(u(x)-u(y)).
\]
By the representation of the Riesz potential, \eqref{eq:rieszpotdef}, and \eqref{eq:pdemin} we have
\begin{equation}\label{eq:threetermestimatefirst}
\begin{split}
 &\int_{\R^n} \varphi\, \Ds{t} u \wedge T_t u\\
 &=\int_{\R^n}\int_{\R^n} \frac{\text{diff}_{\frac ns}u(x,y) \wedge \brac{\lapms{t} \brac{\varphi\, \Ds{t} u}(x)-\lapms{t} \brac{\varphi\, \Ds{t} u}(y)}}{|x-y|^{2n}}\dx \dy\\
 &=\int_{\R^n}\int_{\R^n} \frac{\text{diff}_{\frac ns}u(x,y) \wedge \brac{\lapms{t} \brac{\varphi\, \Ds{t} u}(x)-\lapms{t} \brac{\varphi\, \Ds{t} u}(y) - \frac{1}{2}(u(x)-u(y))(\varphi(x)+\varphi(y))}}{|x-y|^{2n}}\dx \dy.
 \end{split}
\end{equation}
Exactly as in the first lines of the proof of \cite[Lemma 6.6]{S15} we have
\[
\begin{split}
 &\abs{\lapms{t} \brac{\varphi\, \Ds{t} u}(x)-\lapms{t} \brac{\varphi\, \Ds{t} u}(y) - \frac{1}{2}(u(x)-u(y))(\varphi(x)+\varphi(y))}\\
 &\aleq\int_{\R^n} \abs{|x-z|^{t-n}-|y-z|^{t-n}}\, \abs{\Ds{t} u(z)} \abs{\varphi(x)+\varphi(y)-2\varphi(z)} \dz.
\end{split}
 \]
Thus,
\[
\begin{split}
 &\abs{\int_{\R^n} \varphi\, \Ds{t} u \wedge T_t u}\\
 &\aleq\int_{\R^n}\int_{\R^n}\int_{\R^n} \frac{|u(x)-u(y)|^{\frac ns-1} \abs{|x-z|^{t-n}-|y-z|^{t-n}}\, \abs{\Ds{t} u(z)} \abs{\varphi(x)+\varphi(y)-2\varphi(z)}}{|x-y|^{2n}}\dx \dy \dz.
\end{split}
 \]
This is the same situation as in \eqref{eq:aslkvxc}: the role of $\varphi \in L^{\frac{n}{t}}$ in \eqref{eq:aslkvxc} is taken here by $\Ds{t} u \in L^{\frac{n}{t}}$, and the role of $|u(x)+u(y)-2u(z)|$ is taken by $|\varphi(x)+\varphi(y)-2\varphi(z)|$ and observe that $\Ds{t} \varphi \in L^{\frac{n}{t}}$. As was discussed there we can pick $r \leq t$, and thus we have
\[
\begin{split}
 \abs{\int_{\R^n} \varphi\, \Ds{t} u \wedge T_t u}
 &\aleq \|\laps{r} u\|_{L^{\frac{n}{r}}(\R^n)}^{\frac ns-1} \|\laps{r} \varphi \|_{L^{\frac{n}{r}}(\R^n)}^{\frac ns-1} \|\Ds{t} u\|_{L^{\frac{n}{t}}(\R^n)}\\
 & \aleq [u]_{W^{s,\frac{n}{s}}(\R^n)}^\frac ns\, \|\Ds{t} \varphi\|_{L^{\frac{n}{t}}(\R^n)}.
\end{split}
 \]
We can conclude. (Again it is worth noting that the proof above does not need to distinguish between the case $\frac{n}{s} \geq 2$ and $\frac{n}{s} \leq 2$).
\end{proof}

We are now ready to proceed with the proof of the main result of this section. 
\begin{proof}[Proof of \Cref{pr:improvedest}]
Combining \eqref{eq:Ttintoorthandtang} with \eqref{eq:tangentialpart} and \eqref{eq:orthogonalpart} we get for a $t < \theta s$, where $t$ and $\theta $ are as in \Cref{la:reg:ucdot} and \Cref{la:reg:uwedge}
\begin{equation}\label{eq:Ttestimate}
\|T_t u\|_{L^{\frac{n}{n-t}}(\R^n)} \aleq [u]_{W^{s,\frac ns}(\R^n)}^{\frac ns}.
\end{equation}

By duality \eqref{eq:Ttestimate} implies for any $\varphi \in C^\infty \cap L^{\frac{n}{t}}(\R^n,\R^{\ell+1})$,
\[
 \int_{\R^n} \int_{\R^n} \frac{|u(x)-u(y)|^{\frac ns-2} (u(x)-u(y))\, \brac{\lapms{t} \varphi(x)-\lapms{t} \varphi(y)}}{|x-y|^{2n}}\dx \dy \aleq [u]_{W^{s,\frac ns}(\R^n)}^{\frac ns} \|\varphi\|_{L^{\frac{n}{t}}(\R^n)}.
\]
Thus for any $\psi \in C_c^\infty(\R^n)$
\begin{equation}\label{eq:asdlkjhslf}
 \int_{\R^n} \int_{\R^n} \frac{|u(x)-u(y)|^{\frac ns-2} (u(x)-u(y))\, \brac{\psi(x)-\psi(y)}}{|x-y|^{2n}}\dx \dy \aleq [u]_{W^{s,\frac ns}(\R^n)}^{\frac ns} \|\Ds{t} \psi\|_{L^{\frac{n}{t}}(\R^n)}.
\end{equation}
Using Sobolev embedding this implies for $t<t_2<s$
\[ \int_{\R^n} \int_{\R^n} \frac{|u(x)-u(y)|^{\frac ns-2} (u(x)-u(y))\, \brac{\psi(x)-\psi(y)}}{|x-y|^{2n}}\dx \dy \aleq [u]_{W^{s,\frac ns}(\R^n)}^{\frac ns} [\psi]_{W^{t_2,\frac{n}{t_2}}(\R^n)}.
\]
The constant depends on $|t-t_2|$, so by taking $\tilde{\theta}$ slightly larger than $\theta$ and $t_2 = \tilde{\theta} s$ we have $\tilde{\theta} s-t > (\tilde{\theta}-\theta)s_0$, so the constant can be chosen uniform and, by density, \cite[2.6.2. Proposition 1.]{RunstSickel}, we obtain \eqref{eq:asdlkjhslf2} for $\tilde{\theta}$.
\end{proof}

\subsection{A fractional version of Iwaniec's stability result}
A fractional version of Iwaniec's stability result was proposed in \cite{S16}. However, the result of \cite{S16} does not apply in our situation since it only considers stability in the differential direction, without adjusting the integrability. We need the latter, since we need to stay in the scaling invariant case. Hence, we employ a different version of the Iwaniec's stability result \cite[Theorem 13.2.1]{IM01} to obtain the following regularity result.
\begin{proposition}\label{pr:iwaniec}
For any $0 < s_0 < s_1 < 1$ there exists an $\eps_0 =\eps_0(s_0,s_1,n)> 0$ such that the following holds:

For any $s\in(s_0,s_1)$ and any $\Lambda > 0$ there exists a constant $C(\Lambda)$ such that if $u \in L^\infty \cap W^{s,\frac{n}{s}}(\R^n,\R^N)$ satisfies for a $t \in (s-\eps_0,s]$ and for any $\psi \in \dot{W}^{t,\frac{n}{t}}(\R^n,\R^N)$
\begin{equation}\label{eq:iwaniecpde}
 \abs{\int_{\R^n} \int_{\R^n} \frac{|u(x)-u(y)|^{\frac ns-2} (u(x)-u(y))\, \brac{\psi(x)-\psi(y)}}{|x-y|^{2n}}\dx \dy} \leq \Lambda [\psi]_{W^{t,\frac{n}{t}}(\R^n)},
\end{equation}
then for $r\coloneqq s\frac{n-t}{n-s} \geq s$
\[
 [u]_{W^{r,\frac{n}{r}}(\R^n)} \leq C(\Lambda,\eps_0,s_0,s_1).
\]
\end{proposition}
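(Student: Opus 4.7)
The plan is to adapt Iwaniec's nonlinear stability strategy \cite[Theorem 13.2.1]{IM01} to the nonlocal setting. Set $p \coloneqq n/s$, write $\delta v(x,y) \coloneqq v(x) - v(y)$, and introduce the measure $d\mu \coloneqq |x-y|^{-2n}\,dx\,dy$ on $\R^{2n}$. Then \eqref{eq:iwaniecpde} reads as
\[
  \abs{\int_{\R^{2n}} N(\delta u)\, \delta \psi \,d\mu}\, \leq\, \Lambda\, [\psi]_{W^{t,n/t}(\R^n)},
\]
where $N(\xi) \coloneqq |\xi|^{p-2}\xi$ is a bi-Lipschitz homeomorphism between $L^p(\mu)$ and $L^{p'}(\mu)$. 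The hypothesis says that $N(\delta u)$ pairs against $\delta \psi$ as if it lived one derivative better than $L^{p'}(\mu)$, morally in the space dual to $\dot W^{t,n/t}$.

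The target exponent $r = s(n-t)/(n-s)\geq s$ is dictated precisely by inverting $N$: since $N^{-1}$ has integrability exponent $p - 1$, an $L^{n/(n-t)}$-improvement of $N(\delta u)$ transfers to an $L^{(n/(n-t))(p-1)}$-improvement of $\delta u$, and a direct computation gives $(n/(n-t))(p-1) = n(n-s)/(s(n-t)) = n/r$. The quantity $\|\delta u\|_{L^{n/r}(\mu)}^{n/r}$ equals $[u]^{n/r}_{W^{r,n/r}(\R^n)}$, so the desired estimate is precisely the Iwaniec-type gain dictated by the nonlinear exponent.

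To make this rigorous, I would run a perturbative fixed-point argument in $L^{n/r}(\mu)$. Insert a regularized truncation of the nonlinear test $\psi \sim N^{-1}(|\delta u|^{n/r - p}\,\delta u)$, justified by $u \in L^\infty$ and by density as in the final step of the proof of \Cref{pr:improvedest}; bound $[\psi]_{W^{t,n/t}}$ by Riesz-potential and maximal-function estimates in the spirit of \Cref{la:reg:ucdot} and \Cref{la:reg:uwedge}; then exploit the pointwise monotonicity $(N(a)-N(b))(a-b) \gtrsim |a-b|^p$. This yields a schematic inequality
\[
  \|\delta u\|_{L^{n/r}(\mu)}^{n/r}\, \leq\, C(\Lambda)\, +\, \omega(\eps_0)\,\|\delta u\|_{L^{n/r}(\mu)}^{n/r},
\]
for a modulus $\omega(\eps_0) \to 0$ as $\eps_0 \to 0$, coming from the integrability gap $s - t$. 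Choosing $\eps_0$ small enough absorbs the error and proves the claim.

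The main obstacle is justifying the nonlinear substitution in the nonlocal setting. Iwaniec's classical argument exploits an exact Hodge decomposition identity for $|\nabla u|^{p-2}\nabla u$; here, $|\delta u|^{n/r - p}\,\delta u$ is not the finite difference $\delta \tilde\psi$ of any natural $\tilde\psi \in W^{t,n/t}(\R^n)$, so it cannot simply be inserted as a test function. The resulting defect has to be controlled through a Leibniz-commutator estimate of type $H_{\laps{t}}$ as in \eqref{eq:threetermestimatethird}, where the small factor arises from the gap between $p'$ and $n/(n-t)$; this is the step where smallness of $\eps_0$ is essential and where the constant $\eps_0(s_0,s_1,n)$ is fixed.
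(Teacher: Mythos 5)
Your overall skeleton is right: interpret the hypothesis as saying that $N(\delta u) = |\delta u|^{p-2}\delta u$ (with $p = n/s$) acts like a well-behaved functional, produce a Hodge-type decomposition so that the nonlinear "test function" $|\delta u|^{-\eps}\delta u$ (with $\eps = n/s - n/r$) can be fed into the equation, estimate the defect, and absorb for small $\eps_0$. This matches the paper's strategy. You also correctly identify the main obstacle: $|\delta u|^{-\eps}\delta u$ is not of the form $\delta\psi$ for any $\psi$, so a defect term $G$ is unavoidable. The paper handles it by defining $A$ via the linear fractional PDE $(-\Delta)^t A = \div_t\bigl(|x-y|^{-t}|\delta u|^{-\eps}\delta u\bigr)$ (\Cref{la:linearpdetheory}) and setting $G(x,y) = (A(x)-A(y)) - |\delta u|^{-\eps}\delta u$.

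The genuine gap in your proposal is the mechanism producing the crucial smallness factor $\omega(\eps_0)$. You attribute it to "a Leibniz-commutator estimate of type $H_{\laps{t}}$, where the small factor arises from the gap between $p'$ and $n/(n-t)$." That is not how one gets a factor of $|\eps|$: the commutator estimates in \Cref{la:reg:ucdot} and \Cref{la:reg:uwedge} provide boundedness, not a quantity that degenerates to zero as the exponent gap shrinks. The paper's smallness estimate (\Cref{pr:smallness}) comes instead from Iwaniec's complex-analytic device: one complexifies the exponent, defining a holomorphic family $z \mapsto G_z$ and a holomorphic scalar functional $z \mapsto F_{\psi,u}(z)$ with the key cancellation $F_{\psi,u}(0) = 0$ (because at $z = 0$ the decomposition $A_0 = u + c$ is exact, so $G_0 \equiv 0$), and then applies the Schwarz lemma to extract the factor $|z| = |\eps|$ on the disc $|z| \le \eps_0$. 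Without this — or some equivalent quantitative vanishing at $\eps=0$ — the "schematic inequality" with modulus $\omega(\eps_0)$ is asserted but not justified, and the absorption step does not close.
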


We first observe that for $p\coloneqq\frac ns$ and  $\eps \coloneqq p-\frac{n}{r} =\frac{n}{s} -\frac{n}{r}> 0$ we have $\frac{t}{1-\eps}=r$ and
\begin{equation}\label{eq:normWrnr}
 [u]_{W^{r,\frac{n}{r}}}^{\frac{n}{r}} = \int_{\R^n} \int_{\R^n} \frac{|u(x)-u(y)|^{p-2} (u(x)-u(y))\, |u(x)-u(y)|^{-\eps} (u(x)-u(y))}{|x-y|^{2n}}\dx \dy.
\end{equation}
We perform a de facto Hodge decomposition:
\begin{equation}\label{eq:hodge}
 |u(x)-u(y)|^{-\eps} (u(x)-u(y)) = A(x)-A(y) + G(x,y),
\end{equation}
where, in the terminology of \cite{MS18}, we choose $A$ such that
\begin{equation}\label{eq:choiceofA}
 (-\Delta)^{t} A(x) \coloneqq \div_{t} \brac{\frac{|u(x)-u(y)|^{-\eps} (u(x)-u(y))}{|x-y|^{t}}},
\end{equation}
that is for any $\varphi \in C_c^\infty(\R^n)$
\[
 (-\Delta)^{t} A[\varphi] = \int_{\R^n} \int_{\R^n} \frac{|u(x)-u(y)|^{-\eps} (u(x)-u(y))\, (\varphi(x)-\varphi(y))}{|x-y|^{n+2t}}\dx \dy.
\]
From the linear theory of partial differential equation we have:
\begin{lemma}\label{la:linearpdetheory}
For any $0 < t_0 < t_1 < 1$ there exists an $\eps_0 > 0$ such that whenever $t \in (t_0,t_1)$ and $A \in \dot{W}^{t,\frac{n}{t}}(\R^n,\R^N)$ as in \eqref{eq:choiceofA} exists and satisfies the estimate
\[
 [A]_{W^{t,\frac{n}{t}}(\R^n)} \aleq [u]_{W^{\frac{t}{1-\eps},(1-\eps)\frac{n}{t}}(\R^n)}^{1-\eps} = [u]_{W^{r,\frac{n}{r}}(\R^n)}^{1-\eps}
\]
with a constant independent of $\eps$ as long as $\eps \in [0,\eps_0]$. $A$ is unique up to constants.

\end{lemma}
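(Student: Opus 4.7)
The plan is to recognize this as a linear fractional elliptic problem: we seek $A$ satisfying $\Ds{2t}A=\div_t f$ with $f(x,y)\coloneqq|u(x)-u(y)|^{-\eps}(u(x)-u(y))/|x-y|^t$. A direct computation gives
\[
\|f\|_{L^{n/t}(\R^n\times\R^n,\mu)}=[u]_{W^{r,n/r}(\R^n)}^{1-\eps},\quad\mu\coloneqq|x-y|^{-n}\dx\dy,
\]
so the task reduces to constructing a bounded inverse for $\Ds{2t}\circ\div_t$ from $L^{n/t}(\mu)$ into $\dot W^{t,n/t}(\R^n)$ modulo constants, with operator norm uniform in both $\eps$ and $t$.

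First I would define $A\coloneqq\Ds{-2t}\div_t f$ distributionally. Using the Gagliardo--Triebel--Lizorkin equivalence $[A]_{W^{t,n/t}}\aeq\|\Ds{t}A\|_{L^{n/t}(\R^n)}$, valid for $t\in(t_0,t_1)$ and $n/t>1$, the problem reduces further to bounding $\Ds{t}A=\lapms{t}\div_t f$ in $L^{n/t}(\R^n)$. By $L^{n/t}$--$L^{n/(n-t)}$ duality, I would test against $\psi\in C_c^\infty(\R^n)$ with $\|\psi\|_{L^{n/(n-t)}(\R^n)}\le 1$ and exploit the distributional action of $\div_t$:
\[
\int_{\R^n}\lapms{t}\div_t f(z)\,\psi(z)\dz=\int_{\R^n}\int_{\R^n}\frac{f(x,y)\,(\lapms{t}\psi(x)-\lapms{t}\psi(y))}{|x-y|^{n+t}}\dx\dy.
\]
H\"older's inequality with the weight $|x-y|^{-n}$ bounds the right-hand side by $\|f\|_{L^{n/t}(\mu)}\,[\lapms{t}\psi]_{W^{t,n/(n-t)}(\R^n)}$, and since $\Ds{t}\lapms{t}=\mathrm{id}$, the Gagliardo--Triebel--Lizorkin equivalence again yields $[\lapms{t}\psi]_{W^{t,n/(n-t)}}\aleq\|\psi\|_{L^{n/(n-t)}}\le 1$. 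This gives the claimed estimate.

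For uniqueness: if $A_1,A_2$ both satisfy \eqref{eq:choiceofA}, then $\Ds{2t}(A_1-A_2)=0$; a tempered distribution annihilated by $\Ds{2t}$ is a polynomial, and the only polynomial that represents an element of $\dot W^{t,n/t}$ modulo constants is a constant. Uniformity in $\eps\in[0,\eps_0]$ is automatic since the operator $\lapms{t}\div_t$ is independent of $\eps$ --- only $f$ involves $\eps$; the role of $\eps_0$ is merely to ensure that $r=t/(1-\eps)\in(0,1)$ so that $[u]_{W^{r,n/r}}$ is a valid Gagliardo seminorm, which forces $\eps_0<1-t_1$. Uniformity in $t\in(t_0,t_1)$ follows from continuous dependence of the constants in the Gagliardo--Triebel--Lizorkin equivalence and in the Riesz-potential mapping estimates. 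The main subtlety I anticipate is verifying that $A$ is a genuine element of $\dot W^{t,n/t}$ modulo constants, rather than only a distribution modulo polynomials; this is resolved by the estimate itself, which exhibits $\Ds{t}A$ as a well-defined $L^{n/t}(\R^n)$ function through the duality pairing above, and $A$ is then recovered uniquely up to constants.
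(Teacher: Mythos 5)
Your overall strategy (Hölder duality against the measure $|x-y|^{-n}\dx\dy$, then Riesz-potential book-keeping, uniformity from the independence of the operator in $\eps$) is the right one, and your Hölder computation and the identity $\|f\|_{L^{n/t}(\mu)}=[u]_{W^{r,n/r}}^{1-\eps}$ are both correct. But there is a genuine gap in the function-space identifications that the paper is careful to avoid.

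You write ``$[A]_{W^{t,n/t}}\aeq\|\Ds{t}A\|_{L^{n/t}(\R^n)}$'' and later ``$[\lapms{t}\psi]_{W^{t,n/(n-t)}}\aleq\|\psi\|_{L^{n/(n-t)}}$'', both attributed to the Gagliardo--Triebel--Lizorkin equivalence. This conflates the Gagliardo (Sobolev--Slobodeckij) space $\dot W^{s,p}\aeq\dot F^{s}_{p,p}$ with the Riesz/Bessel potential space $\|\Ds{s}\cdot\|_{L^p}\aeq\|\cdot\|_{\dot F^{s}_{p,2}}$. These coincide only when $p=2$. For $p\ne 2$ one has a one-sided embedding determined by the monotonicity of $\ell^q$-norms: $\dot F^s_{p,2}\hookrightarrow \dot F^s_{p,p}$ when $p\ge 2$, and the reverse when $p\le 2$. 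In your chain the first step needs the embedding at $p=n/t$ and the second at $p'=n/(n-t)$; since $n/t$ and $n/(n-t)$ are always on opposite sides of $2$ (unless $t=n/2$), precisely one of the two inequalities you invoke goes the wrong way. In particular, for $q=n/(n-t)<2$ the function $\lapms{t}\psi$ need not lie in $\dot W^{t,q}$ at all when $\psi\in L^{q}$, so the Hölder bound you write down may be against an infinite quantity.

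The paper's proof (following \cite[Lemma A.1]{DLMS}) avoids this by never leaving the $\dot F^s_{p,p}$ scale. It uses only three facts: the Gagliardo identification $[A]_{W^{t,n/t}}\aeq [A]_{\dot F^{t}_{n/t,n/t}}$, the lift property $(-\Delta)^{t}\colon \dot F^{t}_{p,p}\to \dot F^{-t}_{p,p}$, and the duality $\dot F^{-t}_{p,p}=\bigl(\dot F^{t}_{p',p'}\bigr)^\ast=\bigl(\dot W^{t,p'}\bigr)^\ast$. The Hölder estimate exhibits $(-\Delta)^{t}A$ as a bounded functional on $\dot W^{t,n/(n-t)}$, which by these identifications gives $[A]_{W^{t,n/t}}\aleq \|f\|_{L^{n/t}(\mu)}$ directly, with constants depending only on $t_0,t_1$ (and independent of $\eps$). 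To repair your argument, replace $\|\Ds{t}A\|_{L^{n/t}}$ by $[(-\Delta)^t A]_{(\dot W^{t,n/(n-t)})^\ast}$ and drop the intermediate passage through $L^{n/t}$--$L^{n/(n-t)}$ duality; once the Hölder step lands in $(\dot W^{t,n/(n-t)})^\ast$, the lift and the Triebel--Lizorkin duality finish the proof. Your uniqueness and uniformity-in-$\eps$ remarks are fine.
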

\begin{proof}
Proceeding exactly as in \cite[Lemma A.1]{DLMS} we have the a priori estimate
\[
\begin{split}
(-\Delta)^{t} A[\varphi] &= \int_{\R^n} \int_{\R^n} \frac{|u(x)-u(y)|^{-\eps} (u(x)-u(y))\, (\varphi(x)-\varphi(y))}{|x-y|^{n+2t}}\dx \dy\\
&\leq [u]_{W^{\frac{t}{1-\eps},(1-\eps)\frac{n}{t}}(\R^n)}^{1-\eps}\, [\varphi]_{W^{t,\frac{n}{n-t}}(\R^n)}.
\end{split}
 \]
Using the identification via Triebel--Lizorkin spaces, \cite[Section~2]{RunstSickel}, we have
\[
 \begin{split}
 [A]_{W^{t,\frac{n}{t}}(\R^n)}
 \aeq[A]_{\dot{F}^{t,\frac{n}{t}}_{\frac{n}{t}}}
 \aeq[(-\Delta)^{t} A]_{\dot{F}^{-t,\frac{n}{t}}_{\frac{n}{t}}}
 \aeq[(-\Delta)^{t} A]_{\brac{\dot{F}^{t,\frac{n}{n-t}}_{\frac{n}{n-t}}}^\ast}
 \aleq [u]_{W^{\frac{t}{1-\eps},(1-\eps)\frac{n}{t}}(\R^n)}^{1-\eps}.
 \end{split}
\]
The constants depend only on $t_0$ and $t_1$ since $t \in (t_0,t_1)$.
In particular, $A$ exists since $(-\Delta)^t A \in \brac{\dot{F}^{t,\frac{n}{n-t}}_{\frac{n}{n-t}}}^\ast$. $A$ is unique up to constants, since $[A]_{W^{t,\frac{n}{t}}(\R^n)}=0$ implies that $A$ is a constant.
\end{proof}

From \Cref{la:linearpdetheory} and \eqref{eq:hodge} we have in particular
\[
 \brac{\int_{\R^n} \int_{\R^n} \frac{|G(x,y)|^{\frac{n}{t}}}{|x-y|^{2n}}\dx \dy}^{\frac{t}{n}} \aleq [u]_{W^{r,\frac{n}{r}}(\R^n)}^{1-\eps}.
\]
The latter estimate can, however, be improved.
\begin{proposition}\label{pr:smallness}
For any $0 < t_0 < t_1 < 1$ there exists an $\eps_0 > 0$ and a constant $C= C(t_0,t_1,\eps_0)$ such that for any $\eps \in (0, \eps_0)$, and $t \in (t_0,t_1)$ and $G$ as in \eqref{eq:hodge},
\[
 \brac{\int_{\R^n} \int_{\R^n} \frac{|G(x,y)|^{\frac{n}{t}}}{|x-y|^{2n}}\dx \dy}^{\frac{t}{n}} \leq C\, \abs{\eps} [u]_{W^{r,\frac{n}{r}}(\R^n)}^{1-\eps}.
\]
\end{proposition}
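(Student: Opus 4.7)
My approach is a complex-interpolation argument patterned on Iwaniec's original stability theorem \cite[Theorem 13.2.1]{IM01}. The decisive observation is that the Hodge decomposition \eqref{eq:hodge} is \emph{trivial} at $\eps=0$: since $u(x)-u(y)$ is already a fractional gradient, \Cref{la:linearpdetheory} at $\eps=0$ forces $A=u$ (modulo constants), hence $G\equiv 0$. The announced factor $|\eps|$ must come from exploiting this vanishing through an analytic-family argument.

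The plan is to extend the construction to a complex parameter. For $z\in\mathbb{C}$ with $\Re z$ in a strip $\Omega=\{0\le\Re z\le\eps_1\}$, with $\eps_1$ slightly larger than $\eps_0$ and tuned so that $r(\eps_1):=t/(1-\eps_1)\le s$, set
\[
f_z(x,y):=e^{-z\log|u(x)-u(y)|}\,(u(x)-u(y)),
\]
let $A_z$ solve the complex-valued analogue of \eqref{eq:choiceofA} supplied by \Cref{la:linearpdetheory}, and define $G_z:=f_z-(A_z(x)-A_z(y))$. The key point is that $|f_z(x,y)|=|u(x)-u(y)|^{1-\Re z}$ depends only on $\Re z$; the factor $e^{-i\,\Im z\,\log|u(x)-u(y)|}$ is a pure phase and acts as an isometry on every Gagliardo-type $L^{n/t}$-space. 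Consequently the a priori estimate of \Cref{la:linearpdetheory} transfers verbatim to the whole strip with constants uniform in $\Im z$; by duality, for every test $\psi$ normalized in $L^{n/(n-t)}(|x-y|^{-n}\dx\dy)$, the pairing
\[
\Phi_\psi(z):=\iint\frac{G_z(x,y)\,\psi(x,y)}{|x-y|^{n+t}}\dx\dy
\]
is uniformly bounded on $\Omega$ by a quantity $M$ of the form $C\,[u]_{W^{r,n/r}(\R^n)}^{1-\eps}$. Here one uses $|u(x)-u(y)|\le 2\|u\|_\infty$ to compare Gagliardo norms at different regularity levels on the two boundary lines of $\Omega$, together with the hypothesis $u\in W^{s,n/s}$ to absorb the $\Re z=\eps_1$ quantity.

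Since $z\mapsto f_z$ is analytic and $f_z\mapsto A_z$ is linear, $z\mapsto\Phi_\psi(z)$ is holomorphic on the open strip and continuous up to the boundary, with $\Phi_\psi(0)=0$. The quotient $\Psi_\psi(z):=\Phi_\psi(z)/z$ therefore extends holomorphically across $z=0$; one controls $\Psi_\psi$ on the two vertical boundaries of $\Omega$ (by Cauchy's estimate near the origin on $\Re z=0$, and by direct division on $\Re z=\eps_1$). Hadamard's three-lines theorem then yields $|\Psi_\psi(z)|\le CM/\eps_1$ throughout $\Omega$, whence
\[
|\Phi_\psi(\eps)|\;\le\;\eps\,|\Psi_\psi(\eps)|\;\le\;C\,\eps\,[u]_{W^{r,n/r}(\R^n)}^{1-\eps},
\]
and the proposition follows by taking the supremum over admissible $\psi$.

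The main technical obstacle I anticipate is securing the uniform boundary control: one must check that the Triebel--Lizorkin duality underlying \Cref{la:linearpdetheory} is compatible with complex-valued, $\Im z$-dependent data, and that the comparison of Gagliardo seminorms at different regularity levels really produces a bound of the form $C\,[u]_{W^{r,n/r}}^{1-\eps}$ rather than a strictly higher-regularity quantity. The tight choice $r(\eps_1)\le s$ is what makes this work; its interaction with the self-improvement from \Cref{pr:improvedest} may require some care. Once this uniform bound is in place, Hadamard's three-lines theorem applied to $\Phi_\psi(z)/z$ cleanly converts the qualitative vanishing $G_0\equiv 0$ into the sharp quantitative smallness $\lesssim|\eps|$ demanded by \Cref{pr:smallness}.
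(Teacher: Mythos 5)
Your proposal is in the same family as the paper's proof --- a holomorphic-family argument patterned on Iwaniec's stability theorem \cite[Theorem 13.2.1]{IM01}, built on the linear theory of \Cref{la:linearpdetheory}, the complexified potential $A_z$, and the vanishing $G_0\equiv 0$. The paper uses the Schwarz lemma on a disk $|z|\le\eps_0$ rather than Hadamard's three-lines theorem on a strip applied to $\Phi_\psi(z)/z$, but that difference is cosmetic; both encode the same idea (maximum modulus after dividing by $z$).

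The genuine gap is exactly where you flag your ``main technical obstacle,'' and you do not resolve it. With a fixed test function $\psi$ normalized in a fixed Lebesgue space, the pairing $\Phi_\psi(z)$ is \emph{not} uniformly controlled across the strip, because the natural Lebesgue exponent for $G_z$ changes with $\Re z$: the estimate delivered by \Cref{la:linearpdetheory} and the associated Triebel--Lizorkin duality bounds $G_z$ in $L^{q/(1+\Re z)}(|x-y|^{-2n}\dx\dy)$, where $q$ is fixed, so the exponent drifts with $\Re z$. Pairing against a $\psi$ normalized for the endpoint exponent $n/t$ creates a H\"older mismatch along the rest of the strip, and you cannot repair this simply by invoking $|u|\le 2$ without degrading the power $1-\eps$ (or losing the constant). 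The paper's fix is to pair $G_z$ against a \emph{$z$-dependent} test factor $|\psi(x,y)|^{q-2-\bar z}\psi(x,y)$, with $\psi$ normalized by $\iint |\psi|^q/|x-y|^{2n}\le 1$. This choice makes the duality exponents match $L^{q/(1+\Re z)}$ at every $z$, so the estimate \eqref{eq:estimateofGnoeps} yields the uniform bound $|F_{\psi,u}(z)|\lesssim [u]_{W^{n/q,q}}^{1+\Re z}$ on the whole disk, and the Schwarz lemma then produces the desired $|\eps|$-factor after evaluating at $z=-\eps$ and choosing $q=\frac{n}{t}(1-\eps)$. You also need to first reduce to $u\in C_c^\infty$ by density (the paper does so via \cite[2.6.2.\ Proposition 1.]{RunstSickel}) to justify the existence of $A_z$ and the holomorphicity of the family; this step is missing from your proposal. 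With the $z$-dependent test function and the density reduction added, your three-lines formulation would work, but without them the uniform boundary estimate you invoke is not available.
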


\begin{proof}We follow the approach in \cite[Theorem 13.2.1]{IM01}.
Fix $\eps_0$ to be specified later. By density, cf. \cite[2.6.2. Proposition 1.]{RunstSickel}, we may assume that $u \in C_c^\infty(\R^n)$ (observe that we can change $u$ by a constant without changing the definitions of $G$ and $A$).

For $z \in \C$, $|z| \leq \eps_0$ we set
\begin{equation}\label{eq:defGz}
 G_z(x,y) = \brac{A_z(x)-A_z(y)} - |u(x)-u(y)|^{z} (u(x)-u(y)),
\end{equation}
where $A_z$ is defined as the solution to
\begin{equation}\label{eq:defAz}
 (-\Delta)^{t} A_z[\varphi] = \int_{\R^n} \int_{\R^n} \frac{|u(x)-u(y)|^{z} (u(x)-u(y))\, (\varphi(x)-\varphi(y))}{|x-y|^{n+2t}}\dx \dy, \quad \forall \varphi\in C_c^\infty(\R^n).
\end{equation}
This is well-defined since $u \in C_c^\infty(\R^n)$ and the right-hand side is a linear functional on a Triebel--Lizorkin space.
Assume that for a $\lambda \in (0,\frac12)$ we have a $q \in (1,\infty)$ such that
\begin{equation}\label{eq:qcondition}2t-\frac{n}{q}(1+\Re(z)) \in( \lambda,1-\lambda).\end{equation} Let us remark already that later we will apply this to $q = \frac{n}{t} (1+\Re(z))$ for $|z| \leq \eps_0$, so that $\lambda$ and $\eps_0$ can be chosen depending only on $t_0$ and $t_1$. 

We get by \eqref{eq:defGz}
\begin{equation}\label{eq:firstestimate}
\brac{\int_{\R^n} \int_{\R^n} \frac{|G_z(x,y)|^{\frac{q}{1+\Re(z)}}}{|x-y|^{2n}}\dx \dy}^{\frac{1+\Re(z)}{q}}
\aleq  [u]_{W^{\frac{n}{q},q}(\R^n)}^{1+\Re(z)} + [A_z]_{W^{\frac{n}{q}(1+\Re(z)),\frac{q}{1+\Re(z)}}(\R^n)}.
\end{equation}
Arguing with the identification of Triebel--Lizorkin spaces as in \Cref{la:linearpdetheory} we have
\begin{equation}
[A_z]_{W^{\frac{n}{q}(1+\Re(z)),\frac{q}{1+\Re(z)}}(\R^n)}
\aeq   [A_z]_{\dot{F}^{\frac{n}{q}(1+\Re(z))}_{\frac{q}{1+\Re(z)},\frac{q}{1+\Re(z)}}(\R^n)}
\aeq   [(-\Delta)^{t} A]_{\dot{F}^{\frac{n}{q}(1+\Re(z))-2t}_{\frac{q}{1+\Re(z)},\frac{q}{1+\Re(z)}}(\R^n)}.
\end{equation}
Moreover,
\begin{equation}
 \dot{F}^{\frac{n}{q}(1+\Re(z))-2t}_{\frac{q}{1+\Re(z)},\frac{q}{1+\Re(z)}}(\R^n)
 = \brac{\dot{F}^{2t-\frac{n}{q}(1+\Re(z))}_{(\frac{q}{1+\Re(z)})',(\frac{q}{1+\Re(z)})'}(\R^n)}^\ast = \brac{W^{2t-\frac{n}{q}(1+\Re(z)),(\frac{q}{1+\Re(z)})'}(\R^n)}^\ast\eqqcolon X^\ast.
\end{equation}
Hence, by the equivalence of the norms (the constant depends on $\lambda$)
\begin{equation}\label{eq:secondestimate}
\begin{split}
&[A_z]_{W^{\frac{n}{q}(1+\Re(z)),\frac{q}{1+\Re(z)}}(\R^n)}\\
&\aeq  \sup_{[\varphi]_{X} \leq 1}\int_{\R^n} \int_{\R^n} \frac{|u(x)-u(y)|^{z} (u(x)-u(y))\, (\varphi(x)-\varphi(y))}{|x-y|^{n+2t}}\dx \dy\\
&\leq  \sup_{[\varphi]_{X} \leq 1}\int_{\R^n} \int_{\R^n} \frac{|u(x)-u(y)|^{1+\Re(z)}}{|x-y|^{\frac{n}{q}(1+\Re(z))}} \, \frac{\abs{\varphi(x)-\varphi(y)}}{|x-y|^{2t-\frac{n}{q}(1+\Re(z))}} \frac{\dx \dy}{|x-y|^{n}}\\
&\aleq[u]_{W^{\frac{n}{q},q}(\R^n)}^{1+\Re(z)}.
\end{split}
\end{equation}
We stress that for $|z| \leq \eps_0$ all of the constants above are independent of $z$ but depend on $\lambda$. Hence, combining the estimates \eqref{eq:firstestimate} with \eqref{eq:secondestimate} we obtain
\begin{equation}\label{eq:estimateofGnoeps}
 \brac{\int_{\R^n} \int_{\R^n} \frac{|G_z(x,y)|^{\frac{q}{1+\Re(z)}}}{|x-y|^{2n}}\dx \dy}^{\frac{1+\Re(z)}{q}}
\aleq  [u]_{W^{\frac{n}{q},q}(\R^n)}^{1+\Re(z)}.
\end{equation}
Fix now $\psi\colon \R^n\times \R^n\to \R^N$ such that
\begin{equation}\label{eq:psitestiw}
 \int_{\R^n} \int_{\R^n} \frac{|\psi(x,y)|^{q}}{|x-y|^{2n}}\dx \dy \leq 1
\end{equation}
and set
\[
 F_{\psi,u}(z) \coloneqq \int_{\R^n} \int_{\R^n} \frac{\langle G_z(x,y), |\psi(x,y)|^{q-2-\bar{z}} \psi(x,y)\rangle_{\C}}{|x-y|^{2n}}\dx \dy.
\]
Then
\[
 |F_{\psi,u}(z)| \leq\brac{\int_{\R^n} \int_{\R^n} \frac{|G_z(x,y)|^{\frac{q}{1+\Re(z)}}}{|x-y|^{2n}}\dx \dy}^{\frac{1+\Re(z)}{q}} \aleq [u]_{W^{\frac{n}{q},q}(\R^n)}^{1+\Re(z)},
\]
where the constants are independent of $q$ as long as the assumption \eqref{eq:qcondition} is satisfied.

We observe that $F_{\psi,u}(0) = 0$. Indeed, by the definition \eqref{eq:defAz} we have$(-\Delta)^t A_0 = (-\Delta)^t u$, hence $A = u+c$ for a constant $c$. This, by definition \eqref{eq:defGz}, implies $G_0(x,y) \equiv 0$.

Moreover, just as in \cite[Theorem 13.2.1]{IM01}, $z \mapsto F_{\psi,u}(z)$ is holomorphic: since $u \in C_c^\infty(\R^n)$ the map $\partial_{\bar{z}} F_{\psi,u}$ is well-defined, and we can compute explicitly that the Cauchy--Riemann equations are satisfied.


From Schwarz lemma for holomorphic functions we have for all $|z|\le \eps_0$
\begin{equation}\label{eq:schwarzlemmaest}
 \abs{F_{\psi,u}(z) } \aleq |z| [u]_{W^{\frac{n}{p},p}(\R^n)}^{1+\Re(z)}
\end{equation}
with constant independent of $\psi$ as long as \eqref{eq:psitestiw} is satisfied, the constant depends only on $\lambda$ and thus on $t_0$, $t_1$, and $\eps_0$.

Now take $q = \frac{n}{t}(1-\eps)$, $z=-\eps$, $\eps =\frac{n}{s} -\frac{n}{r}$, $r= s\frac{n-t}{n-s}$, so that $\frac{t}{1-\eps} =r$. By \eqref{eq:schwarzlemmaest} we get
\[
\begin{split}
 \brac{\int_{\R^n} \int_{\R^n} \frac{|G_{-\eps}(x,y)|^{\frac{n}{t}}}{|x-y|^{2n}}\dx \dy}^{\frac{t}{n}}
 =F_{\psi,u}(-\eps)\le \sup_{\psi \text{ as in \eqref{eq:psitestiw}}}F_{\psi,u}(-\eps) \aleq |\eps| [u]_{W^{\frac{n}{q},q}(\R^n)}^{1+\Re(z)}.
 \end{split}
\]
\end{proof}
We are ready to proceed with the proof of the main result of this section.
\begin{proof}[Proof of \Cref{pr:iwaniec}]
We have by \eqref{eq:normWrnr} and \eqref{eq:hodge}
\begin{equation}\label{eq:finalfinal}
\begin{split}
 [u]_{W^{r,\frac{n}{r}}(\R^n)}^{\frac{n}{r}}
 &= \int_{\R^n} \int_{\R^n} \frac{|u(x)-u(y)|^{\frac ns-2} (u(x)-u(y))\, \brac{A(x)-A(y)}}{|x-y|^{2n}}\dx \dy\\
  &\quad +\int_{\R^n} \int_{\R^n} \frac{|u(x)-u(y)|^{\frac ns-2} (u(x)-u(y))\, \brac{G(x,y)}}{|x-y|^{2n}}\dx \dy.
  \end{split}
  \end{equation}
By \eqref{eq:iwaniecpde} and \Cref{la:linearpdetheory} we have
\begin{equation}\label{eq:finalfinal1}
 \int_{\R^n} \int_{\R^n} \frac{|u(x)-u(y)|^{\frac ns-2} (u(x)-u(y))\, \brac{A(x)-A(y)}}{|x-y|^{2n}}\dx \dy \le \Lambda [A]_{W^{t,\frac{n}{t}}(\R^n)} \aleq \Lambda\, [u]_{W^{r,\frac{n}{r}}(\R^n)}^{1-\eps}.
\end{equation}
As for the second term on the right-hand side of \eqref{eq:finalfinal} we note that $\frac rn (\frac ns -1) + \frac tn=1$. Hence, by H\"{o}lder's inequality, \Cref{pr:smallness}, and the observation $\frac ns - \eps = \frac nr$
  \begin{equation}\label{eq:finalfinal2}
   \begin{split}
 \int_{\R^n} \int_{\R^n} &\frac{|u(x)-u(y)|^{\frac ns-2} (u(x)-u(y))\, \brac{G(x,y)}}{|x-y|^{2n}}\dx \dy\\
 &\aleq [u]_{W^{r,\frac{n}{r}}(\R^n)}^{\frac ns-1}\,  \brac{\int_{\R^n} \int_{\R^n} \frac{|G(x,y)|^{\frac{n}{t}}}{|x-y|^{2n}}\dx \dy}^{\frac{t}{n}} \\
  &\aleq \abs{\eps} [u]_{W^{r,\frac{n}{r}}(\R^n)}^{\frac nr}.
  \end{split}
  \end{equation}
Combining \eqref{eq:finalfinal} with \eqref{eq:finalfinal1} and \eqref{eq:finalfinal2} we obtain
  \begin{equation}
   \begin{split}
 [u]_{W^{r,\frac{n}{r}}(\R^n)}^{\frac{n}{r}}
 &\aleq  \Lambda\, [u]_{W^{r,\frac{n}{r}}(\R^n)}^{1-\eps} + \abs{\eps} [u]_{W^{r,\frac{n}{r}}(\R^n)}^{\frac{n}{r}}.
 \end{split}
\end{equation}
Now $s-t \leq \eps_0$ implies $\eps = \frac{n}{s}-\frac{n}{r}= \frac ns\brac{\frac{s-t}{n-t}}\le C(s_0,s_1)\eps_0$, so for $\eps_0$ suitably small we can absorb and conclude
\[
 [u]_{W^{r,\frac{n}{r}}(\R^n)}^{\frac{n}{r}} \aleq C(\eps_0,\Lambda,s_1,s_0) .
\]
\end{proof}

\section{Continuous dependence}
The main observation is that the regularity theory of \Cref{th:regularity} combined with the stability \Cref{pr:energyclose} implies:
\begin{corollary}\label{co:comparisonenergy}
Fix $n,\ell \in \N$ with with either $(\ell,n) = (1,1)$ or $\ell \geq 2$. Fix $0<s_0 < s_1 < 1$ and $\Lambda > 0$, then for any $\eps > 0$ there exists $\delta > 0$ such that the following holds.

If $s\in(s_0,s_1)$ and $\#_s \alpha$ is attained with $\#_s \alpha \leq \Lambda$ for a homotopy class $\alpha \in \pi_{n}(\S^\ell)$.

Then, for any $\tilde{s} \in (s-\delta,s+\delta)$,
\[
 \#_{s} \alpha \geq \#_{\tilde{s}} \alpha - \eps.
\]
\end{corollary}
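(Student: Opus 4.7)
The plan is to use the attained minimizer $u$ of $\#_s\alpha$ directly as a competitor for $\#_{\tilde s}\alpha$, so that $\#_{\tilde s}\alpha \leq [u]_{W^{\tilde s,n/\tilde s}}^{n/\tilde s}$ once we know $u$ represents $\alpha$ in the new space. The analytic task then reduces to showing that $r \mapsto [u]_{W^{r,n/r}}^{n/r}$ is continuous at $r=s$ with a modulus uniform in the minimizer and in $s \in (s_0,s_1)$.

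First, I will apply the global higher regularity theorem \Cref{th:regularity} on the interval $(s_0,s_1)$ to obtain a single $\Theta > 1$ and a constant $C(\Lambda) > 0$, both depending only on $n,\ell,s_0,s_1,\Lambda$, such that every admissible minimizer satisfies $[u]_{W^{\Theta s, n/(\Theta s)}(\S^n,\S^\ell)} \leq C(\Lambda)$. This places $u$ in a strictly finer critical Sobolev space, uniformly in $s$ and in the choice of minimizer. I then feed this bound into the stability statement \Cref{pr:energyclose}, taking $t = s$, $t_0 = s_0/2$, and the ``$s_1$'' there equal to $\Theta s$. Since $|\Theta s - s| = (\Theta-1)s \geq (\Theta-1)s_0 > 0$ on the whole admissible range of $s$, the resulting $\delta$ can be chosen depending only on $\eps,\Lambda,s_0,s_1,n$, so that for every $\tilde s \in (s-\delta, s+\delta)$,
\[
\bigl|[u]_{W^{\tilde s,n/\tilde s}}^{n/\tilde s} - [u]_{W^{s,n/s}}^{n/s}\bigr| \leq \eps.
\]

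To conclude, I will observe that $u$ is admissible in the class $\alpha$ as an element of $W^{\tilde s,n/\tilde s}$. The pointwise bound $|u|\equiv 1$ together with $[u]_{W^{\Theta s, n/(\Theta s)}} < \infty$ gives $u \in W^{\tilde s, n/\tilde s}$ for all $\tilde s \leq \Theta s$, and since all these critical Sobolev spaces embed in VMO, the homotopy class of $u$ in the sense of \cite{BrezisNirenbergI} is the same VMO class in each of them, hence equals $\alpha$. Using $u$ as a competitor and the previous display then yields $\#_{\tilde s}\alpha \leq [u]_{W^{\tilde s,n/\tilde s}}^{n/\tilde s} \leq \#_s\alpha + \eps$. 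I expect the main obstacle to lie in the second step: the constants $\Theta$ and $C(\Lambda)$ from \Cref{th:regularity} must not degenerate across $(s_0,s_1)$, and the dependence of $\delta$ in \Cref{pr:energyclose} on $|s_1-t|$ must be monotone enough that the single lower bound $(\Theta-1)s_0$ suffices to fix a uniform $\delta$. These uniformities are built into the statements of the two preceding results, but they have to be tracked carefully when combining them.
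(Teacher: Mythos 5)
Your proposal is correct and takes essentially the same route as the paper's own proof: apply \Cref{th:regularity} to the minimizer $u$ to get the uniform bound $[u]_{W^{\Theta s,n/(\Theta s)}}\leq C(\Lambda)$, then invoke \Cref{pr:energyclose} (with the gap $|s_1-t|\geq(\Theta-1)s_0$ keeping $\delta$ uniform over $s\in(s_0,s_1)$) and use $u$ as a competitor in $\#_{\tilde s}\alpha$. The one point you spell out that the paper leaves implicit — that $u$ still represents $\alpha$ in $W^{\tilde s,n/\tilde s}$ via the shared VMO homotopy class — is a worthwhile addition but not a different method.
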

\begin{proof}
Let $u\in \alpha$ be the minimizer of $\mathcal E_{s,\frac ns}$. By \Cref{th:regularity} there is a $\Theta>0$ such that
\[
 [u]_{W^{\Theta s,\frac{n}{\Theta s}}} \leq C(s_0,s_1,\Lambda).
\]
Now we pick $\delta$ from \Cref{pr:energyclose} and conclude for any $\tilde{s} \in (s-\delta,s+\delta)$
\[
 \#_{s} \alpha  = [u]_{W^{s,\frac{n}{s}}(\S^n)}^{\frac{n}{s}} \geq [u]_{W^{\tilde{s},\frac{n}{\tilde{s}}}(\S^n)}^{\frac{n}{\tilde{s}}} - \eps \geq \#_{\tilde{s}} \alpha-\eps.
\]
\end{proof}

\subsection{Continuous dependence of minimal energy: Proof of Theorem~\ref{th:continuousminenergy}}
\Cref{th:continuousminenergy} is a pretty straightforward consequence of \Cref{co:comparisonenergy}.
\begin{proof}[Proof of \Cref{th:continuousminenergy}]

Fix $\eps>0$ and $\alpha \in \pi_{n}(\S^\ell)$. Take any smooth map $\bar{u} \in C^\infty(\S^n,\S^\ell)$ that represents $\alpha$ then we have, for any $0<s_0<s_1 <1$ the estimate
\[
 \sup_{t \in (s_0,s_1)} \#_t \alpha \leq C(\bar{u}).
\]
That is, we have a uniform energy bound that is needed later in the application of \Cref{co:comparisonenergy}.

In view of \Cref{th:energyidentity} there is an $N\in\N$ for which
\[
 \#_s \alpha = \sum_{i=1}^N \#_s \alpha_i \quad \text{for some $\sum_{i=1}^N \alpha_i = \alpha$ such that $\#_s \alpha_i$ is attained.}
\]
By \Cref{co:comparisonenergy} there is a $\delta>0$ such that for all $t\in(s-\delta,s+\delta)$ we have
\[
 \#_s \alpha =  \sum_{i=1}^N \#_s \alpha_i\geq \sum_{i=1}^N \brac{\#_t \alpha_i - \frac{\eps}{N}} \geq \#_t \alpha - \eps,
\]
where the last inequality is a consequence of \Cref{pr:easyenergy}
The converse inequality follows by reversing the role of $t$ and $s$.
\end{proof}

\subsection{Proof of Corollaries}

\begin{proof}[Proof of \Cref{co:existenceclosetohalf}]
Fix $0<s_0<1/2 < s_1 < 1$.
Assume that $\#_s 1$ is not attained for some $s\in(s_0,s_1)$. Then by \Cref{th:energyidentity},
\[
 \#_s 1 \geq \#_s d_{1,s} + \#_s d_{2,s}
\]
for degrees $d_{1,s},d_{2,s} \in \mathcal Q\setminus\{-1,0,1\}$ such that $\#_s d_{i,s}$ is attained for $i=1,2$. $\mathcal Q$ is a finite set of integers, in view of \Cref{th:finitehomotopy}.

By \Cref{th:continuousminenergy}, the family of maps
\[
 (s_0,s_1) \ni s \mapsto \#_s d, \quad d \in \mathcal Q
\]
are equicontinuous. So, for every $\eps>0$ there is a $\delta$ such that for every $s \in (\frac12-\delta,\frac12+\delta)$ we have
\[
 \#_{\frac12} 1 \geq \#_s 1 -\eps \geq  \#_s d_{1,s} + \#_s d_{2,s} -\eps \geq \#_{\frac12} d_{1,s} + \#_{\frac12} d_{2,s} - 3\eps.
\]
Combining this with \Cref{th:BBMfors=half} we obtain
\[
 4\pi^2 = \#_{\frac12} 1 \ge \#_{\frac12} d_{1,s} + \#_{\frac12} d_{2,s} - 3\eps \ge 8\pi^2 -3 \eps.
\]
For $\eps<\frac43 \pi^2$ this is a contradiction.
\end{proof}
In a very similar way as \Cref{co:existenceclosetohalf} we obtain
\begin{proof}[Proof of \Cref{co:existenceclosetohalfweirdo}]
The fact that $\#_{t}\alpha$ is attained is an immediate consequence of \cite[Lemma 7.7]{kasia2020minimal}. Arguing exactly as in the proof of \Cref{co:existenceclosetohalf} --- assuming that $\#_s \alpha$ is not attained, we obtain for $\beta_{1,s},\beta_{2,s} \in \pi_{n}(\S^\ell)\setminus \{0\}$
\[
 \#_t \alpha \ge \#_{s} \beta_{1,s} + \#_{s} \beta_{2,s} - \eps \ge \#_{t} \beta_{1,s} + \#_{t} \beta_{2,s} - 3\eps.
\]
However, by assumption $\#_t \alpha \le \#_{t} \beta_{i,s}$ for $i=1,2$. Hence we would get $\#_t \alpha \ge 2\, \#_t \alpha -3\eps$. This gives a contradiction with \Cref{pr:minimalenergy} for sufficiently small $\eps>0$.
\end{proof}

\begin{proof}[Proof of \Cref{co:BBMconstant}]
In view of \Cref{th:finitehomotopy}, for each $\Lambda > 0$ there exists $D \in \mathbb{N}$ such that
\[
 \begin{split}
 \bar{C}_{n,s;\Lambda} &\equiv \sup_{u \in W^{s,\frac{n}{s}}(\S^n,\S^n), 0<[u]_{W^{s,\frac{n}{s}} }\leq \Lambda} \frac{\deg u}{[u]_{W^{s,\frac ns}(\S^n,\S^n)}^{\frac ns}}\\
&= \max_{d \in \Z \setminus\{0\}, |d| \leq D} \sup_{u \in W^{s,\frac{n}{s}}(\S^n,\S^n), [u]_{W^{s,\frac{n}{s}}} \leq \Lambda, \deg u = d} \frac{d}{[u]_{W^{s,\frac ns}(\S^n,\S^n)}^{\frac ns}}\\
&= \max \left \{\frac{d}{\#_s d}\colon \quad |d| \leq D,\, d \neq 0\quad \text{$\exists u \in W^{s,\frac{n}{s}}(\S^n,\S^n)$ with $\deg u = d$ and $[u]_{W^{s,\frac{n}{s}}} \leq \Lambda$}\right \}.
 \end{split}
 \]
For each $d \in \Z \setminus \{0\}$ the map $s \mapsto \frac{d}{\#_s d}$ is continuous, by \Cref{th:continuousminenergy}.

Since
\[
 \bar{C}_{n,s} = \sup_{\Lambda > 0} \bar{C}_{n,s;\Lambda} = \lim_{\Lambda \to \infty} \bar{C}_{n,s;\Lambda}
\]
we have that $s \mapsto \bar{C}_{n,s}$ is lower semicontinuous.
%
%
\end{proof}


%

\subsection{Stability of generators of minimizing \texorpdfstring{$W^{s,\frac{n}{s}}$}{Wsns}-harmonic maps: Proof of Theorem~\ref{th:generator2}}

\Cref{th:generator2} is a consequence of the more precise
\begin{theorem}\label{th:generator2v2}
Fix $n,\ell \geq 1$ with either $(\ell,n) = (1,1)$ or $\ell \geq 2$. Let $\Lambda > 0$ and $0<t_0<t_1 < 1$. There exists a $\delta =\delta(\ell,n,t_0,t_1,\Lambda) \in (0,1)$ such that the following holds for $t \in (t_0,t_1)$.

Set
\[
 X_{s} \coloneqq \left \{\alpha \in \pi_n(\S^\ell)\colon \quad \text{there exists a $W^{s,\frac{n}{s}}(\S^n,\S^\ell)$-minimizer $u$ in $\alpha$ and $[u]_{W^{s,\frac{n}{s}}(\S^n,\S^\ell)} \leq \Lambda$} \right \}
\]
and
\[
Y \coloneqq \bigcap_{s \in (t-\delta,t+\delta)} X_{s}.
\]
Then $X_s$ is generated by $Y$, i.e., $X_{s} \subset {\rm span} Y$, for each $s \in (t-\delta,t+\delta)$.
\end{theorem}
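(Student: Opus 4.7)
The plan is to prove \Cref{th:generator2v2} by strong induction on the $t$-energy $\#_t\alpha$, combining the finiteness \Cref{th:finitehomotopy}, the continuity \Cref{th:continuousminenergy}, the regularity-based comparison \Cref{co:comparisonenergy}, and the Sacks--Uhlenbeck splitting \Cref{le:sucks}.

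First I would reduce to a finite ambient set. Applying \Cref{th:finitehomotopy} with a bound slightly larger than $\max(\Lambda^{n/t_0},\Lambda^{n/t_1})$ on $[t_0,t_1]$ yields a finite $\mathcal{Q}\subset\pi_n(\S^\ell)$ with $X_s\subset\mathcal{Q}$ for every $s\in(t_0,t_1)$. Let $\theta>0$ be a uniform lower bound (on $[t_0,t_1]$) for the constant from \Cref{le:sucks}, and set $\eta\coloneqq \theta/8$. Using \Cref{th:continuousminenergy} applied to the finite family $\{s\mapsto \#_s\beta:\beta\in\mathcal{Q}\}$, I would pick $\delta>0$ so that $\sup_{\beta\in\mathcal{Q}}|\#_s\beta-\#_{\tilde s}\beta|<\eta$ for $s,\tilde s\in(t-\delta,t+\delta)$. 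Shrinking $\delta$ further, \Cref{co:comparisonenergy} (which packages \Cref{th:regularity} and \Cref{pr:energyclose}) would then give the \emph{stability of attainment}: for $s,\tilde s\in(t-\delta,t+\delta)$, if $\alpha\in X_s$ and $\#_{\tilde s}\alpha$ is attained, then $\alpha\in X_{\tilde s}$.

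With these choices the main argument is a strong induction on $\#_t\alpha$ over $\alpha\in\mathcal{Q}$, showing $\alpha\in{\rm span}\,Y$. The induction is well-founded since $\#_t\alpha\ge\lambda>0$ for $\alpha\ne 0$ (\Cref{pr:minimalenergy}). The base case $\alpha=0$ is trivial. For the inductive step, if $\alpha\in Y$ we are done; otherwise there is $\tilde s\in(t-\delta,t+\delta)$ with $\alpha\notin X_{\tilde s}$, and by the stability of attainment $\#_{\tilde s}\alpha$ is \emph{not} attained, so \Cref{le:sucks} furnishes $\alpha=\alpha_1+\alpha_2$ with $\alpha_i\ne 0$ and $\theta<\#_{\tilde s}\alpha_i<\#_{\tilde s}\alpha-\theta/2$. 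Combining with the continuity bound,
\[
\#_t\alpha_i\le \#_{\tilde s}\alpha_i+\eta<\#_{\tilde s}\alpha-\theta/2+\eta\le \#_t\alpha+2\eta-\theta/2<\#_t\alpha-\theta/4,
\]
which puts $\alpha_i\in\mathcal{Q}$ at strictly smaller $t$-energy than $\alpha$. By the inductive hypothesis $\alpha_1,\alpha_2\in{\rm span}\,Y$, hence $\alpha\in{\rm span}\,Y$. Since $X_s\subset\mathcal{Q}$, this concludes the proof.

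The hard part will be the stability of attainment: \Cref{co:comparisonenergy} directly controls only the minimal \emph{energy} $\#_{\tilde s}\alpha$, whereas membership in $X_{\tilde s}$ is phrased as a bound on the Gagliardo \emph{seminorm} of the minimizer. The crucial leverage is \Cref{th:regularity}, which supplies a scaling-invariant higher-integrability bound $[u]_{W^{\Theta s,n/(\Theta s)}}\le C(\Lambda)$ for the minimizer $u$ at $s$; by \Cref{pr:energyclose} this lets $u$ serve as a quantitatively controlled test map at any nearby $\tilde s$, and matching this error against the continuous threshold $s\mapsto \Lambda^{n/s}$ is what ultimately pins down the size of $\delta=\delta(\ell,n,t_0,t_1,\Lambda)$.
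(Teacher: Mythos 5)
Your induction-on-energy scheme is genuinely different from the paper's proof (which applies \Cref{th:energyidentity} at the ``bad'' scale $\tilde s$, iterates the resulting decomposition, and terminates by a combinatorial cycle argument over the finite set $\mathcal Q$), and most of the bookkeeping you do is sound: the reduction to a finite $\mathcal Q$ via \Cref{th:finitehomotopy}, the equicontinuity of $s\mapsto\#_s\beta$ over $\mathcal Q$, and the arithmetic showing $\#_t\alpha_i<\#_t\alpha-\theta/4$ once \Cref{le:sucks} is in play are all fine (modulo checking that the constant $\theta=\theta(s,n,\ell)$ in \Cref{le:sucks} is uniformly bounded below on $[t_0,t_1]$, which you state but do not verify).

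The gap is in the ``stability of attainment'' step, and it is not merely the hard part --- as stated, it is false, and your sketch of how to prove it controls the wrong quantity. Membership in $X_{\tilde s}$ requires the \emph{minimizer} at $\tilde s$ to satisfy the seminorm bound, i.e.\ $\#_{\tilde s}\alpha\leq\Lambda^{n/\tilde s}$, whereas \Cref{co:comparisonenergy} (via \Cref{th:regularity} and \Cref{pr:energyclose}, exactly as you describe) only gives $\#_{\tilde s}\alpha\leq\#_s\alpha+\eps\leq\Lambda^{n/s}+\eps$. The threshold $\Lambda^{n/s}$ itself moves with $s$, and in the unfavorable direction: if $\Lambda>1$ and $\tilde s>s$ then $\Lambda^{n/\tilde s}<\Lambda^{n/s}$, so whenever $\#_s\alpha$ sits near the boundary $\Lambda^{n/s}$ one gets $\#_{\tilde s}\alpha>\Lambda^{n/\tilde s}$ with $\#_{\tilde s}\alpha$ still attained. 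In that situation $\alpha\in X_s$, $\alpha\notin X_{\tilde s}$, yet $\#_{\tilde s}\alpha$ \emph{is} attained, so \Cref{le:sucks} returns no nontrivial decomposition and your induction has nowhere to go. No amount of shrinking $\delta$ removes this, since the gap $\Lambda^{n/\tilde s}-\Lambda^{n/s}$ tends to $0$ as $\tilde s\to s$ while $\eps$ is fixed by the comparison lemma. What you would need --- and what the paper avoids needing by decomposing at $\tilde s$ via \Cref{th:energyidentity} rather than demanding non-attainment --- is an argument that handles the case ``$\alpha\notin X_{\tilde s}$ but $\#_{\tilde s}\alpha$ is attained'' directly, i.e.\ that shows such an $\alpha$ is nevertheless a sum of elements of $Y$.
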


\begin{proof}[Proof of \Cref{th:generator2v2}]
Let $\eps>0$ be fixed and chosen below, for this $\eps$ we take $\delta >0$ from \Cref{co:comparisonenergy}.

By \Cref{th:finitehomotopy} there exists a finite number $M \in \N$ such that if for some $s\in(t-\delta,t+\delta)$ $[u]_{W^{s,\frac ns}(\S^n)}<\Lambda$ then $u\in\mathcal Q \coloneqq\{\alpha_1,\ldots,\alpha_M\}\subset\pi_n(\S^\ell)$. In particular, $X_s\subset \mathcal Q$ for all $s\in(t-\delta,t+\delta)$.
%
Observe that for each $s \in (t-\delta,t+\delta)$ we have from \Cref{th:sucks}
\[
 X_s \text{ generates } \mathcal Q.
\]
Let us enumerate the elements of $Y =\bigcap_{s \in (t-\delta,t+\delta)} X_s$
\[
 Y = \{\gamma_1,\ldots,\gamma_{K}\} \quad \text{ for $K \leq M$}.
\]
Of course, for now $K = 0$ is a possibility.

We define $Z\subset \mathcal Q\setminus Y$ as the collection of homotopy groups $\beta$ where $\#_s \beta$ is not attained for at least one $s$, more precisely:
\[
 Z \coloneqq \left\{\alpha \in \pi_{n}(\S^\ell)\colon\ \#_{s} \alpha \leq \Lambda\quad \text{ for an } s\in(t-\delta,t+\delta)\right\} \setminus Y = \{\beta_1,\ldots,\beta_L\}.
\]
We note that $L+K\le M$. 
Now assume that for some $s \in (t-\delta,t+\delta)$ there is $\alpha$ such that $\alpha \in X_s \setminus Y$. Then there must be some $\tilde{s} \in (t-\delta,t+\delta)$ with $\alpha \not \in X_{\tilde{s}}$.
By \Cref{th:energyidentity} we find $\beta_{k} \in Z$ and $\gamma_{k} \in Y\setminus \{0\}$ such that $\#_{\tilde s} \beta_{k}$ and $\#_{\tilde s}\gamma_{k}$ are attained, $2\le A + B\le M$, and
\begin{equation}\label{eq:energydecomposition123}
\alpha = \sum_{k=1}^{A} \beta_{k} + \sum_{k=1}^{B}\gamma_{k}, \quad \#_{\tilde{s}} \alpha= \sum_{k=1}^{A} \#_{\tilde{s}} \beta_{k} + \sum_{k=1}^{B} \#_{\tilde{s}} \gamma_{k}.
\end{equation}
Using \eqref{eq:energydecomposition123} and applying twice \Cref{co:comparisonenergy} we obtain
\begin{equation}\label{eq:blahblahblahblahv2}
 \#_s \alpha \geq \#_{\tilde{s}} \alpha -\eps = \sum_{k=1}^{A} \#_{\tilde{s}} \beta_{k} + \sum_{k=1}^{B} \#_{\tilde{s}} \gamma_{k} -\eps \geq \sup_{r \in (t-\delta,t+\delta)}\brac{\sum_{k=1}^{A} \#_{r} \beta_{k} + \sum_{k=1}^{B} \#_{r} \gamma_{k}} - (M+1)\eps.
\end{equation}
In particular we have
\begin{equation}\label{eq:blahblahblahblah}
 \#_s \alpha \geq \#_{\tilde{s}} \alpha -\eps \geq \sum_{k=1}^{A} \#_{s} \beta_{k} + \sum_{k=1}^{B} \#_{s} \gamma_{k} - (M+1)\eps.
\end{equation}
Observe that this is a contradiction if any of $\beta_{k}$'s on the right-hand side is equal to $\alpha$ (assuming $\eps$ is small enough).

If $A=0$ then we are done because then $\alpha = \sum \gamma_k$ and each $\gamma_k \in Y$.

If $A>0$, then for each $\beta_{k}$ in \eqref{eq:blahblahblahblahv2} we have two possibilities: either $\#_s \beta_{k}$ is attained (but still $\beta_{k}\not\in Y$) or $\#_s \beta_{k}$ is not attained. Rearranging the terms we can assume that
\begin{itemize}
 \item for the terms $\beta_{1},\ldots,\beta_{{A_1}}$, $\#_s \beta_k$ is attained, i.e., $\beta_k \in X_s \setminus Y$,
\item for the terms $\beta_{{A_1+1}},\ldots,\beta_{{A}}$ the infimum $\#_s \beta_k$ is not attained, i.e., $\beta_k \not \in X_s$.
\end{itemize}
It is possible that $A_1=0$ or $A_1=A$.

In case $\beta_{k}\in X_s\setminus Y$ we find an $s_k\in (t-\delta,t+\delta)$ such that $\beta_{k}\notin X_{s_k}$ and we apply \eqref{eq:blahblahblahblah}.

In case $\beta_{k}\notin X_s$ we apply \Cref{th:energyidentity}.

We obtain first using \Cref{co:comparisonenergy} and then using \eqref{eq:blahblahblahblah} and  \Cref{th:energyidentity}
\begin{equation}\label{eq:anothersplit}
 \begin{split}
  \sum_{k=1}^A \#_s \beta_{k} 
  &\ge \sum_{k=1}^{A_1} \#_{ s_k} \beta_{k} + \sum_{k=A_1+1}^A \#_s \beta_{k} -M\eps\\
  &\geq \sum_{k=1}^{A_1} \brac{\sum_{j=1}^{A_{1,k}} \#_{s} \beta_{k_j} + \sum_{j=1}^{B_{1,k}}\#_{s} \gamma_{k_j}}
  + \sum_{k=A_1+1}^A \brac{\sum_{j=1}^{A_{\tilde 1,k}} \#_s \beta_{\tilde k_j} + \sum_{j=1}^{B_{\tilde 1,k}} \#_s \gamma_{\tilde k_j}} - (2M^2+M)\eps\\
  &= \sum_{i=1}^{\tilde A_1} \#_s \beta_{\sigma(i)} + \sum_{i=1}^{\tilde B_2} \#_s \gamma_{\zeta(i)} -(2M^2+M)\eps,
 \end{split}
  \end{equation}
  where $\tilde A_1,\tilde B_1\le 2M^2$, $\sigma(i)\in\{1,\ldots,L\}$, $\zeta(i)\in\{1,\ldots,K\}$ for each $i$, and at each step $2\le A_{1,k} + B_{1,k}\le M$. Now again, we can repeat the procedure. However, since there is at most $M$ elements in $Z$ we obtain that after a finite number of iterations of this procedure the same element $\beta_i$ would appear on the right-hand side and on the left-hand side of \eqref{eq:anothersplit}. This would give a contradiction for a sufficiently small $\eps$ (depending on $M$ and $\lambda$ from \Cref{pr:minimalenergy}). Hence, we must obtain at some moment a decomposition of $\alpha$ into terms belonging only to $Y$.
\end{proof}

\bibliographystyle{abbrv}%
\bibliography{bib}%
\end{document}